\begin{document}

\title{A Series of High Order Quasi-Compact Schemes for Space
Fractional Diffusion Equations Based on the Superconvergent Approximations for Fractional Derivatives
\thanks{This work was supported by the National Natural Science
Foundation of China under Grant No. 11271173.}
}
%\subtitle{Do you have a subtitle?\\ If so, write it here}

\titlerunning{High Order Quasi-Compact Schemes}        % if too long for running head

\author{Lijing Zhao         \and
        Weihua Deng %etc.
}

%\authorrunning{Short form of author list} % if too long for running head

\institute{Lijing Zhao \at
              School of Mathematics and Statistics, Gansu Key Laboratory of Applied Mathematics and Complex Systems,
              Lanzhou University, Lanzhou 730000, P.R. China \\
%              Tel.: +123-45-678910\\
%              Fax: +123-45-678910\\
              \email{zhaojane8836@gmai.com}           %  \\
%             Present address:} of F. Author  %  if needed
           \and
           Weihua Deng \at
              School of Mathematics and Statistics, Gansu Key Laboratory of Applied Mathematics and Complex Systems,
              Lanzhou University, Lanzhou 730000, P.R. China \\
               \email{dengwh@lzu.edu.cn, dengwhmath@aliyun.com}
}

\date{Received: date / Accepted: date}
% The correct dates will be entered by the editor

\maketitle

\begin{abstract}
%Insert your abstract here. Include keywords, PACS and mathematical
%subject classification numbers as needed.

Based on the superconvergent approximation at some point (depending on the fractional order $\alpha$, but not belonging to the mesh points) for Gr\"{u}nwald discretization  to fractional derivative, we develop a series of high order quasi-compact schemes for space fractional diffusion equations. Because of the quasi-compactness of the derived schemes, no points beyond the domain are used for all the high order schemes including second order, third order, fourth order, and even higher order schemes; moreover, the algebraic equations for all the high order schemes have the completely same matrix structure. The stability and convergence analysis for some typical schemes are made; the techniques of treating the nonhomogeneous boundary conditions are introduced; and extensive numerical experiments are performed to confirm the theoretical analysis or verify the convergence orders.

%In this paper, we design serious of high order approximations to space fractional derivatives basing
%on the idea of Taylor expansion.
%To discrete a space fractional diffusion problem, we focus on the schemes that have an
%uniform form, called ``CN-Practical" form, which has no matter with
%the approximations' order.
%One of the benefits of this kind of unified form is that we do not
%have to exploit more nodes information when solving a fractional
%problem by using a higher order method.
%Comparing with the strictness on the function that to be solved in the literature,
%a much more relaxed conditions are given in this paper.
%Also the third and higher order ones that we are interested in can be
%generally derived in a mechanical way.
%The stability analysis is
%theoretically proved for some detailed schemes respectively.
%Then the following numerical examples
%verify the effectiveness of
%the discretizations which confirm the theoretical analysis.
\keywords{fractional derivative \and high order scheme \and quasi-compactness \and  stability analysis}
% \PACS{PACS code1 \and PACS code2 \and more}
\subclass{35R11 \and 65M06 \and 65M12}
\end{abstract}

\section{Introduction}
\label{sec:1}

Fractional derivatives have been widely applied to model the problems in physics \cite{Barkai:00,Blumen:89,Bouchaud:90,Chaves:98,Klafter:87,Meerschaert:02,Saichev:97},
finance \cite{Raberto:02,Sabatelli:02}, and hydrology \cite{Benson:00,Schumer:03}, especially to
the anomalous diffusion or dispersion, where a particle plume spreads at a rate inconsistent with
the classical Brownian motion \cite{Chechkin:02,Krepysheva:06,Negrete:03}.
The suitable mathematical models are the generalization to the classical diffusion equations formally replacing the classical first order derivative in time by the Caputo fractional derivative of order $\alpha\in(0,1)$, and the second order derivative in space by the Riemann-Liouville fractional derivative of order $\alpha\in(1,2]$. While, unlike the classical counterparts,
because of the nonlocal properties of fractional
operators, obtaining the analytical solutions of the fractional partial differential equations (PDEs) is more challenging or sometimes even
impossible; or the obtained analytical solutions are just expressed by transcendental functions
or infinite series. So, efficiently solving the fractional PDEs naturally becomes an urgent topic.

Over the last decades, the finite difference methods have achieved some developments in solving the
fractional differential equations, e.g., \cite{Celik:12,Deng:07,Meerschaert:04,Yang:10,Zhao:13}. The Riemann-Liouville space fractional derivative
can be naturally discretized by the standard Gr\"{u}nwald-Letnikov formula \cite{Podlubny:99} with first order accuracy, but the finite difference schemes derived by the discretization are unconditionally unstable for the initial value
problems including the implicit schemes that are well known
to be stable most of the time for classical derivatives \cite{Meerschaert:04}. To remedy the instability, Meerschaert and Tadjeran in \cite{Meerschaert:04} firstly propose the shifted Gr\"{u}nwald-Letnikov formula to approximate fractional advection-dispersion flow equations with still first order of accuracy.
Recently, the high order approximations to space fractional derivatives are studied. Using the idea of second order central difference formula, a second order discretization for Riemann-Liouville fractional derivative is established in \cite{Sousa:11}, and the scheme is extended to two dimensional two-sided space fractional convection diffusion equation in finite domain in \cite{Chen:13}.  By assembling the
Gr\"{u}nwald difference operators with different weights and shifts,  a class of stable second
order discretizations for Riemann-Liouville space fractional derivative is developed in \cite{Tian:12,Li:13}, and an abstract way of discussing the stability and convergence of the discretizations can be seen in \cite{Baeumer:13}; its corresponding third order quasi-compact scheme is given in \cite{Zhou:13}. Allowing to use the points outside of the domain,  a class of second, third and fourth order approximations for Riemann-Liouville space fractional derivatives are derived in \cite{Chen2:13} by using the weighted and shifted Lubich difference operators.
%And it is also worth mentioning that by introducing a fractional derivative operator, B. Baeumer et.al. in \cite{Baeumer:13} show a way to explore high order Gr\"{u}nwald approximations; stability and consistency analysis here is also novel and special.

Comparing with the classical PDEs, the high order finite difference schemes get more striking benefits than the low order ones in solving the fractional PDEs. The reason is that the high order schemes can keep the same computational cost as the first order ones but greatly improve the accuracy. Usually, the high and low order schemes have the same algebraic structures, e.g., $(\textbf{T}-\textbf{A})\textbf{U}^{n+1}=(\textbf{T}+\textbf{A})\textbf{U}^{n}+\textbf{b}^{n+1}$, where $\textbf{T}$ is tri-diagonal, and $\textbf{A}$ is Toeplitz-like full matrix. Even though the matrix $\textbf{A}$ is full because of the nonlocal property of fractional operator, the so-called compactness to the schemes for classical differential equation still makes sense here since it can make the high order schemes avoid using the points outside of the domain and the corresponding schemes reduce to the classical compact schemes when the order of fractional derivative is taken as an integer. And we call this kind of schemes high order quasi-compact schemes. The superconvergence of the Gr\"{u}nwald discretizaton to the Riemann-Liouville derivative at some special point is introduced in Sec. 8.2 of  \cite{Oldham:74} and a further discussion is given in \cite{Nasir:13}. Using the superconvergence of the Gr\"{u}nwald discretization, in this paper we develop a series of quasi-compact second order, third order, and fourth order schemes for space fractional diffusion equation:
\begin{equation}\label{equation1.1}
\left\{ \begin{array}{lll}
\frac{\partial u(x,t) }{\partial t}&=&K_1\,_{x_L}D_x^{\alpha}u(x,t)+K_2\, _{x}D_{x_R}^{\alpha}u(x,t)+f(x,t) \\
&& ~~~~ ~~~~ ~~~ {\rm for}~~~ (x,t) \in (x_L,x_R)\times (0,T),\\
u(x,0) &=&u_0(x) ~~~~ {\rm for}~~~ x \in [x_L,x_R], \\
u(x_L,t)&=&\phi_{L}(t) ~~~~{\rm for}~~~ t\in[0,T],\\
u(x_R,t)&=&\phi_{R}(t) ~~~~{\rm for}~~~ t\in[0,T],
\end{array} \right.
\end{equation}
where $_{x_L}D_x^{\alpha}$ and $_{x}D_{x_R}^{\alpha}$ are, respectively, left and right Riemann-Liouville fractional
derivatives with $1<\alpha \leq 2$. The diffusion
coefficients $K_1$ and $K_2$ are nonnegative constants with $K_1^2+K_2^2\neq 0$.
%And if $K_1\neq0$, then $\phi_{L}(t)\equiv 0$; if
%$K_2\neq0$, then $\phi_{R}(t)\equiv 0$.
The left and right Riemann-Liouville  fractional
derivatives of the function $u(x)$ on $[a,b]$, $-\infty \leq a <b\leq \infty$ are, respectively, defined by \cite{Podlubny:99,Samko:93}
\begin{equation}\label{equation1.2}
_{a}D_x^{\alpha}u(x)= D^{m}{}_{a}D_{x}^{-(m-\alpha)}u(x)
~~{\rm and}~~~ _{x}D_{b}^{\alpha}u(x)=(-D)^{m}{}_{x}D_{b}^{-(m-\alpha)}u(x),
\end{equation}
where $\alpha \in (m-1,m)$; and
\begin{equation}\label{equation1.3}
{}_{a}D_{x}^{-\gamma}u(x)
=\frac{1}{\Gamma(\gamma)}\int_{a}\nolimits^x{\left(x-\xi\right)^{\gamma-1}}{u(\xi)}d\xi,  ~~~~~~ \gamma > 0,
\end{equation}
and
\begin{equation}\label{equation1.4}
{}_{x}D_{b}^{-\gamma}u(x)
= \frac{1}{\Gamma(\gamma)}
\int_{x}\nolimits^{b}{\left(\xi-x\right)^{\gamma-1}}{u(\xi)}d\xi, ~~~~~~\gamma > 0,
\end{equation}
are the $\gamma$-th left and right Riemann-Liouville fractional integrals, respectively.
%In the analysis of the numerical method that follows, we assume that (\ref{equation1.1}) has a unique and
%sufficiently smooth solution.

The paper is organized as follows. In Section
\ref{sec:2}, we derive a series of second and third order approximations to the linear combinations of the Riemann-Liouville space fractional derivatives and show the basic ways to derive the higher order schemes. In Section \ref{sec:3}, the derived high order schemes are applied to solve space fractional diffusion problem, and the stability and convergence analyses for some of the schemes are
performed.
%The obtained so-called quasi-compact approximations can also apply to nonhomogeneous problems, which is simply discussed in Section \ref{sec:add1}.
Some numerical results are given in Section \ref{sec:4} to confirm the theoretical analyses and convergence orders; in particular, the equation with nonhomogeneous boundary conditions is also numerically solved by using the techniques introduced in the Appendix which help keeping the desired convergence orders. We conclude the paper with some remarks in the last section.

\section{Series of high order approximations to the linear combinations of Riemann-Liouville space fractional derivatives}
\label{sec:2}

For a real number $\alpha \in \mathcal{R}$ and $f(x)\in C[a,b]$, the fractional derivatives $\,_{a}^G D_x^{\alpha}f(x)$
and $\,_{x}^G D_{b}^{\alpha}f(x)$
are defined by the standard left and right Gr\"{u}nwald-Letnikov formulations \cite{Podlubny:99}
\begin{equation}\label{equation2.01}
\,_{a}^G D_x^{\alpha}f(x)=\lim_{h\rightarrow 0} \frac{1}{h^\alpha}\sum_{k=0}^{[\frac{x-a}{h}]}g_k^{(\alpha)}f(x-kh),
\end{equation}
\begin{equation}
\,_{x}^G D_{b}^{\alpha}f(x)=\lim_{h\rightarrow 0} \frac{1}{h^\alpha}\sum_{k=0}^{[\frac{b-x}{h}]}g_k^{(\alpha)}f(x+kh),
\end{equation}
where $g_k^{(\alpha)}=(-1)^k\left( \begin{array}{c}\alpha \\ k\end{array} \right )$ are the coefficients of the power series of the generating function $(1-\zeta)^{\alpha}$,
 and they can be calculated by the following recursively formula
\begin{equation}\label{equation2.02}
g_0^{\alpha}=1, ~~~~g_k^{(\alpha)}=\left(1-\frac{\alpha+1}{k}\right)g_{k-1}^{\alpha},~~k \geq 1.
\end{equation}
If $a=-\infty$ or $b=+\infty$, then \cite{Samko:93}
\begin{equation}\label{equation2.11}
\,_{-\infty}^G D_x^{\alpha}f(x)=\lim_{h\rightarrow 0} \frac{1}{h^\alpha}\sum_{k=0}^{\infty}g_k^{(\alpha)}f(x-kh),
\end{equation}
\begin{equation}
\,_{x}^G D_{+\infty}^{\alpha}f(x)=\lim_{h\rightarrow 0} \frac{1}{h^\alpha}\sum_{k=0}^{\infty}g_k^{(\alpha)}f(x+kh).
\end{equation}
For the issue of numerical stability, the shifted Gr\"{u}nwald difference operator
\begin{equation}\label{equation2.13}
\delta^{\alpha}_{h,p}f(x):= \frac{1}{h^\alpha}\sum_{k=0}^{\infty}g_k^{(\alpha)}f(x-(k-p)h)
\end{equation}
is introduced to approximate the left Riemann-Liouville fractional derivative
(and $\sigma^{\alpha}_{h,p}f(x):= \frac{1}{h^\alpha}\sum_{k=0}^{\infty}g_k^{(\alpha)}f(x+(k-p)h)$ to the right Riemann-Liouville fractional derivative) with first order accuracy \cite{Meerschaert:04}. And it is clear that
\begin{equation}\label{equation2.14}
\delta^{\alpha}_{h,p}f(x)=\delta^{\alpha}_{h,p+q}f(x-qh),~~~q\in \mathbf{Z}.
\end{equation}

\begin{remark}\label{remark2.01}
 For the functions defined on a bounded interval $[a, b]$, sometimes we discuss them in $(-\infty,b]$ or $[a,+\infty)$ by zero extending their definitions. In the following, we do not restate this.
\end{remark}

\subsection{Some of the second order approximations}\label{subsec:2.1}

In \cite{Nasir:13,Oldham:74}, it is found that the Gr\"{u}nwald approximation has a superconvergent point. Based on this fact, in this subsection we derive a series of second order quasi-compact approximations for the combined left Riemann-Liouville fractional derivatives. For the right Riemann-Liouville
fractional derivative, the same results can be obtained if $\delta^{\alpha}_{h,p}f(x),~p=-1,0,1$ is substituted by $\sigma^{\alpha}_{h,p}f(x),~p=-1,0,1$, and $f(x-\cdot)$ by $f(x+\cdot)$.

By realigning the equi-spaced grid points, Nasir et al in \cite{Nasir:13} refer to that $\delta^{\alpha}_{h,1}f(x)$ can
approximate $\,_{-\infty}D_{x}^{\alpha}f(x+\beta h)$ with second order accuracy, where $\beta=1-\frac{\alpha}{2}$, i.e.,
\begin{equation}\label{equation2.15}
\,_{-\infty}D_{x}^{\alpha}f(x+\beta h)=\delta^{\alpha}_{h,1}f(x)+O(h^2).
\end{equation}
Supposing $\,_{-\infty}D_{x}^{\alpha}f(x)\in C^2(\mathbb{R})$, then by the Taylor series expansions there exists
\begin{eqnarray}\label{equation2.16}
&&\,_{-\infty}D_{x}^{\alpha}f(x+\beta h)
\nonumber\\
&=&\lambda^{(\beta)}_{m,n}\,_{-\infty}D_{x}^{\alpha}f(x+(m-1)h)
+\lambda^{(\beta)}_{n,m}\,_{-\infty}D_{x}^{\alpha}f(x+(n-1)h)+O(h^2),~
\end{eqnarray}
where
\begin{equation}\label{equation2.16*1}
\lambda^{(\beta)}_{m,n}=\frac{\beta-n+1}{m-n},~~~\lambda^{(\beta)}_{n,m}=\frac{\beta-m+1}{n-m},~~~m\neq n.
\end{equation}
Combining (\ref{equation2.15}) and (\ref{equation2.16}) leads to a second order approximation
\begin{eqnarray}\label{equation2.17}
&&\lambda^{(\beta)}_{m,n}\,_{-\infty}D_{x}^{\alpha}f(x+(m-1)h)+\lambda^{(\beta)}_{n,m}\,_{-\infty}D_{x}^{\alpha}f(x+(n-1)h)
\nonumber\\
&=&\delta^{\alpha}_{h,1}f(x)+O(h^2).
\end{eqnarray}
We further derive the next second order approximation. First, we have
\begin{eqnarray}\label{equation2.18}
\,_{-\infty}D_{x}^{\alpha}f(x)&=&
\xi^{(\beta)}_{p,q}\,_{-\infty}D_{x}^{\alpha}f(x+(p-1)h+\beta h)
\nonumber\\
&&+\xi^{(\beta)}_{q,p}\,_{-\infty}D_{x}^{\alpha}f(x+(q-1)h+\beta h)+O(h^2),
\end{eqnarray}
where
\begin{equation}\label{equation2.18*1}
\xi^{(\beta)}_{p,q}=\frac{\beta+q-1}{q-p},~~~\xi^{(\beta)}_{q,p}=\frac{\beta+p-1}{p-q},~~~p\neq q.
\end{equation}
From (\ref{equation2.14}), (\ref{equation2.15}), and (\ref{equation2.18}), there exists the second order approximation
\begin{eqnarray}\label{equation2.19}
&&\,_{-\infty}D_{x}^{\alpha}f(x)
\nonumber\\
&=&\xi^{(\beta)}_{p,q}\delta^{\alpha}_{h,1}f\big(x+(p-1)h\big)+\xi^{(\beta)}_{q,p}\delta^{\alpha}_{h,1}f\big(x+(q-1)h\big)+O(h^2)
\nonumber\\
&=&\xi^{(\beta)}_{p,q}\delta^{\alpha}_{h,p}f(x)+\xi^{(\beta)}_{q,p}\delta^{\alpha}_{h,q}f(x)+O(h^2).
\end{eqnarray}
From (\ref{equation2.14}), (\ref{equation2.15}), (\ref{equation2.16}), and (\ref{equation2.18}), we get
\begin{eqnarray}\label{equation2.110}
&&\xi^{(\beta)}_{p,q}\big[\lambda^{(\beta)}_{m_1,n_1}\,_{-\infty}D_{x}^{\alpha}f\big(x+(p-1)h+(m_1-1)h\big)
\nonumber\\
&&~~~~~~+\lambda^{(\beta)}_{n_1,m_1}\,_{-\infty}D_{x}^{\alpha}f\big(x+(p-1)h+(n_1-1)h\big)\big]
\nonumber\\
&&+\xi^{(\beta)}_{q,p}\big[\lambda^{(\beta)}_{m_2,n_2}\,_{-\infty}D_{x}^{\alpha}f\big(x+(q-1)h+(m_2-1)h\big)
\nonumber\\
&&~~~~~~~+\lambda^{(\beta)}_{n_2,m_2}\,_{-\infty}D_{x}^{\alpha}f\big(x+(q-1)h+(n_2-1)h\big)\big]
\nonumber\\
&=&\xi^{(\beta)}_{p,q}\big[\lambda^{(\beta)}_{m_1,n_1}\,_{-\infty}D_{x}^{\alpha}f\big(x+(p+m_1-2)h\big)
\nonumber\\
&&~~~~~~+\lambda^{(\beta)}_{n_1,m_1}\,_{-\infty}D_{x}^{\alpha}f\big(x+(p+n_1-2)h\big)\big]
\nonumber\\
&&+\xi^{(\beta)}_{q,p}\big[\lambda^{(\beta)}_{m_2,n_2}\,_{-\infty}D_{x}^{\alpha}f\big(x+(q+m_2-2)h\big)
\nonumber\\
&&~~~~~~+\lambda^{(\beta)}_{n_2,m_2}\,_{-\infty}D_{x}^{\alpha}f\big(x+(q+n_2-2)h\big)\big]
\nonumber\\
&=&\xi^{(\beta)}_{p,q}\delta^{\alpha}_{h,1}f\big(x+(p-1)h\big)+\xi^{(\beta)}_{q,p}\delta^{\alpha}_{h,1}f\big(x+(q-1)h\big)+O(h^2)
\nonumber\\
&=&\xi^{(\beta)}_{p,q}\delta^{\alpha}_{h,p}f(x)+\xi^{(\beta)}_{q,p}\delta^{\alpha}_{h,q}f(x)+O(h^2),
\end{eqnarray}
where $p\neq q,~m_1\neq n_1,~m_2\neq n_2$.

In fact, (\ref{equation2.110}) implies (\ref{equation2.19}).
So now we have obtained two types of second order approximations for the combined Riemann-Liouville fractional derivatives:
\begin{eqnarray}\label{equation2.111}
&&\lambda^{(\beta)}_{m,n}\,_{-\infty}D_{x}^{\alpha}f(x+(m-1)h)+
\lambda^{(\beta)}_{n,m}\,_{-\infty}D_{x}^{\alpha}f(x+(n-1)h)
\nonumber\\
&=&\delta^{\alpha}_{h,1}f(x)+O(h^2),
\end{eqnarray}
where $m\neq n$; and
\begin{eqnarray}\label{equation2.114}
&&\xi^{(\beta)}_{p,q}\big[\lambda^{(\beta)}_{m_1,n_1}\,_{-\infty}D_{x}^{\alpha}f\big(x+(p+m_1-2)h\big)
\nonumber\\
&&~~~~~~+\lambda^{(\beta)}_{n_1,m_1}\,_{-\infty}D_{x}^{\alpha}f\big(x+(p+n_1-2)h\big)\big]
\nonumber\\
&&+\xi^{(\beta)}_{q,p}\big[\lambda^{(\beta)}_{m_2,n_2}\,_{-\infty}D_{x}^{\alpha}f\big(x+(q+m_2-2)h\big)
\nonumber\\
&&~~~~~~+\lambda^{(\beta)}_{n_2,m_2}\,_{-\infty}D_{x}^{\alpha}f\big(x+(q+n_2-2)h\big)\big]
\nonumber\\
&=&\xi^{(\beta)}_{p,q}\delta^{\alpha}_{h,p}f(x)+\xi^{(\beta)}_{q,p}\delta^{\alpha}_{h,q}f(x)+O(h^2),
\end{eqnarray}
%\end{subequations}
where $p\neq q,~m_1\neq n_1,~m_2\neq n_2$.
For convenience, we assume that $m < n,~p < q,~m_1< n_1$, and $m_2< n_2$.

\begin{remark}\label{remark2.1}
It should be noted that the operator (\ref{equation2.19}) can also be derived by the way of weighting and shifting
Gr\"{u}nwald difference operator \cite{Tian:12}.
\end{remark}

In the above the general second order schemes are presented, in practice we are more interested in the quasi-compact (not using the points outside of the domain) ones, which have the form
\begin{eqnarray}\label{equation2.115}
&&c_{-1}\,_{-\infty}D_{x}^{\alpha}f(x-h)+c_{0}\,_{-\infty}D_{x}^{\alpha}f(x)+c_{1}\,_{-\infty}D_{x}^{\alpha}f(x+h)
\nonumber\\
&=&d_{p}\delta^{\alpha}_{h,p}f(x)+d_{q}\delta^{\alpha}_{h,q}f(x)+O(h^2),
\end{eqnarray}
 where $c_{-1}+c_{0}+c_{1}=d_{p}+d_{q},~|p|\leq 1,~ |q| \leq 1$.
%One reason is that they are easier to operate mechanically to generate higher order methods as well as
%to be used in solving a fractional diffusion PDE, which we will see more clearly in next (sub)sections.
%Moreover, for a fixed node $x$, if the information of more than three nodes at its neighborhood were used in one approximation,
%and this approximation
%were used into a fractional diffusion PDE to discretize the left (as well as right) Riemann-Liouville fractional derivative,
%the amount of computations will of course increase, since more nodes' information are used.
%And a more important reason is, the values of the solution that is out of the boundaries must be used, which have to be gained by continuation or by other methods.
%However, this kind of trouble no longer exist for approximations that rooted in the second order ``practical" ones. Because they can always hold the genetic form (\ref{equation2.115})(also see below), no mater what order the operators is. In other words,
%as they are used to solve a fractional diffusion equation, the ``practical" approximations with any order accuracy, which can trace back to the second order ``practical" ones, do not need to exploit
%the information beyond the boundary.

The quasi-compact approximations corresponding to (\ref{equation2.17}), where $(m,n)$ is taken as $(0,1)$, $(1,2)$, and $(0,2)$, respectively, are
\begin{equation}\label{equation2.116}
\lambda^{(\beta)}_{0,1}\,_{-\infty}D_{x}^{\alpha}f(x-h)+
\lambda^{(\beta)}_{1,0}\,_{-\infty}D_{x}^{\alpha}f(x)
=\delta^{\alpha}_{h,1}f(x)+O(h^2),
\end{equation}
\begin{equation}\label{equation2.117}
\lambda^{(\beta)}_{1,2}\,_{-\infty}D_{x}^{\alpha}f(x)+
\lambda^{(\beta)}_{2,1}\,_{-\infty}D_{x}^{\alpha}f(x+h)
=\delta^{\alpha}_{h,1}f(x)+O(h^2),
\end{equation}
and
\begin{equation}\label{equation2.1177}
\lambda^{(\beta)}_{0,2}\,_{-\infty}D_{x}^{\alpha}f(x-h)+
\lambda^{(\beta)}_{2,0}\,_{-\infty}D_{x}^{\alpha}f(x+h)
=\delta^{\alpha}_{h,1}f(x)+O(h^2).
\end{equation}

The values of the parameters $(p,q)$, $(m_1,n_1)$, and $(m_2,n_2)$ to generate the quasi-compact schemes from (\ref{equation2.114}) are listed in Table \ref{tables2.1}; each group of parameters corresponds to a specific different quasi-compact approximation.

%As for (\ref{equation2.114}), if $(p,q),~(m_1,n_1)$ and $(m_2,n_2)$ are taken as in Table \ref{tables2.1},
\begin{table}
\caption{The parameters $(p,q)$, $(m_1,n_1)$, and $(m_2,n_2)$ corresponding to the second order quasi-compact approximations of (\ref{equation2.114})}
\label{tables2.1}
\begin{tabular}{c|cc||c|cc||c|cc}
\hline\noalign{\smallskip}
$(p,q)$ & $(m_1,n_1)$ & $(m_2,n_2)$&$(p,q)$ & $(m_1,n_1)$ & $(m_2,n_2)$ &$(p,q)$ & $(m_1,n_1)$ & $(m_2,n_2)$ \\
\noalign{\smallskip}\hline\noalign{\smallskip}
&    (1,2)& (0,1) &    &(2,3)& (0,1) &    &(2,3)& (1,2)\\
&    (1,2)& (0,2) &    &(2,3)& (0,2) &    &(2,3)& (1,3)\\
&    (1,2)& (1,2) &    &(2,3)& (1,2) &    &(2,3)& (2,3)\\
&    (1,3)& (0,1) &    &(2,4)& (0,1) &    &(2,4)& (1,2)\\
(0,1)&(1,3)& (0,2) & (-1,1)&   (2,4)& (0,2)& (-1,0)&(2,4)& (1,3)\\
&    (1,3)& (1,2) &    &(2,4)& (1,2) &    &(2,4)& (2,3)\\
&    (2,3)& (0,1) &    &(3,4)& (0,1) &    &(3,4)& (1,2)\\
&    (2,3)& (0,2) &    &(3,4)& (0,2) &    &(3,4)& (1,3)\\
\noalign{\smallskip}\hline
\end{tabular}
\end{table}
%we can get the approximations that we are interested in, \big(some of them might be repeated,
%taking $(p,q)$, $(m_1,n_1)$, $(m_2,n_2)$ like $(0,1)$, $(1,2)$, $(0,1)$ and $(0,1)$, $(2,3)$, $(1,2)$;
%$(-1,1)$, $(2,3)$, $(0,1)$ and $(-1,1)$, $(3,4)$, $(1,2)$\big).

If $\,_{-\infty}D_{x}^{\alpha}f(x)\in C^3(\mathbb{R})$,  from (\ref{equation2.15}) we can derive a new quasi-compact second order approximation. Since
\begin{eqnarray}\label{equation2.118}
&&\,_{-\infty}D_{x}^{\alpha}f(x+\beta h)
\nonumber\\
&=&\lambda_{-1,0,1}^{(\beta)}\,_{-\infty}D_{x}^{\alpha}f(x-h)
+\lambda_{0,1,-1}^{(\beta)}\,_{-\infty}D_{x}^{\alpha}f(x)
\nonumber\\
&&+\lambda_{1,-1,0}^{(\beta)}\,_{-\infty}D_{x}^{\alpha}f(x+h)+O(h^3),
\end{eqnarray}
where
\begin{equation}\label{equation2.119}
\lambda_{-1,0,1}^{(\beta)}=-\frac{\beta(1-\beta)}{2},
~~~\lambda_{0,1,-1}^{(\beta)}=1-\beta^2,
~~~\lambda_{1,-1,0}^{(\beta)}=\frac{\beta(1+\beta)}{2};
\end{equation}
then the obtained approximation is
\begin{eqnarray}\label{equation2.120}
&&\lambda_{-1,0,1}^{(\beta)}\,_{-\infty}D_{x}^{\alpha}f(x-h)+
\lambda_{0,1,-1}^{(\beta)}\,_{-\infty}D_{x}^{\alpha}f(x)+
\lambda_{1,-1,0}^{(\beta)}\,_{-\infty}D_{x}^{\alpha}f(x+h)
\nonumber\\
&=&\delta^{\alpha}_{h,1}f(x)+O(h^2).
\end{eqnarray}

\subsection{Asymptotic expansions for the truncation errors of Gr\"{u}nwald approximations}\label{subsec:2.2}

For the convenience of obtaining higher order approximations later, now we make the detailed asymptotic expansions for the truncation errors of Gr\"{u}nwald approximations.
\begin{lemma}\label{lemma2.21}
Let $m-1\leq \alpha<m$, $m,n \in \mathbb{N}^{+}$, $f(x)\in C^{n+m-1}[a,b]$, $D^{n+m}f(x) \in L^1[a,b]$, and $D^{k}f(a)=D^{k}f(b)=0,~k=0,1,\cdots,n+m-1$. Then for any integer $p$ and a real parameter $\gamma$, we have
\begin{equation}\label{equation2.21}
\,_{a}D_{x}^{\alpha}f(x+\gamma h)-\delta_{h,p}^\alpha f(x)
=\sum_{l=1}^{n-1}a_l(\gamma,p)\,_{a}D_{x}^{\alpha+l}f(x)h^l+O(h^n)
\end{equation}
for $x+\gamma h \in [a,b]$, where $a_l(\gamma,p)$ are the coefficients of the power series of the function
$W_{\alpha,p}(z)=\bigg(e^{\gamma z}-e^{pz}\big(\frac{1-e^{-z}}{z}\big)^{\alpha}\bigg)$, i.e.,
$W_{\alpha,p}(z)=\sum_{l=0}^{\infty}a_l z^l$.
\end{lemma}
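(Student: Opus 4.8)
The plan is to reduce the statement to a statement about the generating function of the Gr\"{u}nwald weights and then extract the asymptotic expansion by matching power series coefficients. First I would work in the Fourier (or Laplace) domain: for a function $f$ satisfying the stated vanishing conditions at $a$ and $b$, zero-extend it to all of $\mathbb{R}$ (as in Remark~\ref{remark2.01}); the hypotheses $f\in C^{n+m-1}$ and $D^{k}f(a)=D^{k}f(b)=0$ for $k\le n+m-1$ guarantee that the extension lies in $C^{n+m-1}(\mathbb{R})$ with $D^{n+m}f\in L^1(\mathbb{R})$, so its Fourier transform $\widehat{f}(\omega)$ decays fast enough that $|\omega|^{\alpha+n}\widehat f(\omega)$ is integrable. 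Recall that the Fourier symbol of $\,_{a}D_x^\alpha$ (equivalently $\,_{-\infty}D_x^\alpha$ on the extension) is $(i\omega)^\alpha$, and the symbol of the shifted Gr\"unwald operator $\delta^\alpha_{h,p}$ is $h^{-\alpha}e^{ip\omega h}(1-e^{-i\omega h})^\alpha$. Hence the Fourier symbol of the left-hand side of \eqref{equation2.21} is
\begin{equation*}
(i\omega)^\alpha e^{i\gamma\omega h}-h^{-\alpha}e^{ip\omega h}\bigl(1-e^{-i\omega h}\bigr)^\alpha
=(i\omega)^\alpha\Bigl[e^{i\gamma\omega h}-e^{ip\omega h}\Bigl(\tfrac{1-e^{-i\omega h}}{i\omega h}\Bigr)^\alpha\Bigr]
=(i\omega)^\alpha\, W_{\alpha,p}(i\omega h),
\end{equation*}
with $W_{\alpha,p}$ exactly the function in the statement. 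This is the conceptual heart of the argument.

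Next I would analyze $W_{\alpha,p}(z)$ near $z=0$. Since $\frac{1-e^{-z}}{z}=1-\frac z2+\frac{z^2}{6}-\cdots$ is analytic and equals $1$ at $z=0$, its $\alpha$-th power is analytic in a neighborhood of the origin; multiplied by the entire functions $e^{\gamma z}$ and $e^{pz}$ and subtracted, $W_{\alpha,p}$ is analytic near $0$, so $W_{\alpha,p}(z)=\sum_{l\ge 0}a_l(\gamma,p)z^l$ converges for $|z|$ small. Crucially $a_0(\gamma,p)=1-1=0$: this encodes the consistency of the shifted Gr\"unwald approximation and is what makes the leading term on the right of \eqref{equation2.21} an $h^1$ term (when $a_1\ne0$). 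Therefore $W_{\alpha,p}(i\omega h)=\sum_{l=1}^{n-1}a_l(\gamma,p)(i\omega h)^l+R_n(i\omega h)$ where $|R_n(z)|\le C_n|z|^n$ uniformly for $|z|\le z_0$. The term $(i\omega)^\alpha a_l(\gamma,p)(i\omega h)^l=a_l(\gamma,p)h^l(i\omega)^{\alpha+l}$ is, by the inverse Fourier transform, precisely $a_l(\gamma,p)\,h^l\,_{a}D_x^{\alpha+l}f(x)$, which gives the claimed expansion terms. (Here one uses that $f$ is smooth and compactly supported enough that $\,_{a}D_x^{\alpha+l}f$ makes sense in $L^1$ for $l\le n-1$, which again follows from $f\in C^{n+m-1}$.)

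The remaining—and main technical—obstacle is the uniform $O(h^n)$ bound on the remainder term $\frac1{2\pi}\int(i\omega)^\alpha R_n(i\omega h)\widehat f(\omega)\,d\omega$, because $R_n(z)\le C_n|z|^n$ holds only for $|z|$ bounded, whereas $\omega$ ranges over all of $\mathbb{R}$. I would split the integral at $|\omega|\le h^{-1/2}$ (say): on the low-frequency part $|i\omega h|\le h^{1/2}\to 0$ so the pointwise bound applies and contributes $C_n h^n\int|\omega|^{\alpha+n}|\widehat f(\omega)|\,d\omega=O(h^n)$ using integrability of $|\omega|^{\alpha+n}\widehat f$; on the high-frequency tail $|\omega|>h^{-1/2}$ one bounds $(i\omega)^\alpha W_{\alpha,p}(i\omega h)$ crudely by $C|\omega|^\alpha(1+|\omega h|^{\lceil\alpha\rceil})\le C|\omega|^{\alpha+m}$ and uses the rapid decay of $\widehat f$ coming from $D^{n+m}f\in L^1$ to make the tail $o(h^n)$ — in fact one wants a few more derivatives of decay, which is why the hypothesis is stated with $n+m-1$ continuous derivatives rather than just $n$. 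This frequency-splitting estimate, and making the bookkeeping of exactly how many derivatives of $f$ are needed match the stated hypotheses, is where the real work lies; everything else is the formal power-series identity for $W_{\alpha,p}$. An essentially equivalent route avoiding Fourier analysis would expand $\frac{1}{h^\alpha}(1-e^{-z})^\alpha e^{pz}$ as a series in $z$ directly, apply it termwise to $f$ via the known composition rule $\,_{a}D_x^{\alpha}\circ D^l = \,_{a}D_x^{\alpha+l}$ (valid under the vanishing boundary data), and control the tail of the Gr\"unwald sum using the known decay $g_k^{(\alpha)}=O(k^{-\alpha-1})$ of the weights together with Taylor's theorem with integral remainder for $f$; I would present the Fourier argument as the main line since it is cleanest, and remark that the direct expansion reproduces the same coefficients $a_l(\gamma,p)$.
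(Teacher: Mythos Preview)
Your Fourier-transform strategy is exactly the paper's: compute the symbol of $\,_aD_x^\alpha f(\cdot+\gamma h)-\delta_{h,p}^\alpha f$, factor out $(-i\omega)^\alpha$, recognize the remaining factor as $W_{\alpha,p}(-i\omega h)$, expand as a power series with $a_0=0$, and invert term by term. Where your proposal diverges, and has a real gap, is the remainder estimate. Your split at $|\omega|=h^{-1/2}$ does not give $O(h^n)$ under the stated hypotheses: on the tail your crude bound is wrong in form (for imaginary $z$ one has $|W_{\alpha,p}(z)|\le 1+(2/|z|)^\alpha$, which is \emph{bounded}, not growing like $|z|^{\lceil\alpha\rceil}$), and even with the correct bound the decay $|\widehat f(\omega)|\le C|\omega|^{-(n+m)}$ yields only a power like $h^{(n+m-\alpha-1)/2}$ from the tail integral, far short of $h^n$. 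The ``few more derivatives'' you anticipate would in fact roughly double the regularity requirement, so it is not mere bookkeeping.

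The paper splits instead at $|\omega h|=R$, a \emph{fixed} radius where $\sum a_lz^l$ converges absolutely, and on the tail multiplies and divides by $(-i\omega h)^{n}$: for $|\omega h|>R$,
\[
\bigl|W_{\alpha,p}(-i\omega h)\,(-i\omega)^\alpha\widehat f(\omega)\bigr|
=\frac{|W_{\alpha,p}(-i\omega h)|}{|\omega h|^{n}}\cdot h^n\,\bigl|(-i\omega)^{\alpha+n}\widehat f(\omega)\bigr|
\le\frac{1+(2/R)^\alpha}{R^n}\, h^n\,\bigl|(-i\omega)^{\alpha+n}\widehat f(\omega)\bigr|,
\]
and similarly $\bigl|\sum_{l<n}a_l(-i\omega h)^{l}\bigr|=\bigl|\sum_{l<n}a_l(-i\omega h)^{l-n}\bigr|\cdot|\omega h|^n\le \bigl(\sum_{l<n}|a_l|R^{l-n}\bigr)|\omega h|^n$. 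Combined with the obvious bound for $|\omega h|\le R$, this gives a \emph{uniform} pointwise inequality $|\widehat\varphi(\omega,h)|\le C\,h^n\bigl|(-i\omega)^{\alpha+n}\widehat f(\omega)\bigr|$ for all $\omega\in\mathbb{R}$. The hypotheses are tailored exactly so that $(-i\omega)^{\alpha+n}\widehat f=\mathcal F\bigl[\,_aD_x^{\alpha-m}D^{n+m}f\bigr]\in L^1(\mathbb{R})$ (fractional integral of an $L^1$ function, using the boundary vanishing to commute $D^{m+n}$ past the fractional integral). It is this $L^1$ membership, not polynomial decay of $\widehat f$, that closes the argument after integrating in $\omega$.
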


\begin{remark}\label{remark2.21}
It can be noted that in \cite{Tadjeran:06}, a similar result ($\gamma=0$) is given under the condition that
``let $1<\alpha<2$, $f\in C^{n+3}(\mathbb{R})$ such that all derivatives of $f$ up to order $n+3$ belong to $L^1(\mathbb{R})$",
which can be interpreted as ``Let $f\in C^{n+3}[a,b]$ and $D^{k}f(a)=D^{k}f(b)=0,~k=0,1,\cdots,n+3$", if the function $f(x)$ is defined on a bounded interval $[a, b]$. So Lemma \ref{lemma2.21} to be proven
here holds under a weaker condition.

It can be noticed that if $g(x)\in L^1[a,b]$, then $\widehat{g}(\omega)\in L^1(\mathbb{R})$,
where $\widehat{g}(\omega)=\mathcal{F}(g)(\omega)$, i.e.,
\begin{eqnarray*}
\widehat{g}(\omega)=\int_{\mathbb{R}}e^{i\omega x }g(x)dx.
\end{eqnarray*}
And
\begin{equation}\label{equation2.21*02}
\mathcal{F}[\,_{a}D_{x}^{-\alpha}g(x)](\omega)=\mathcal{F}[\,_{-\infty}D_{x}^{-\alpha}g(x)](\omega)=
(-i\omega)^{-\alpha}\widehat{g}(\omega)\in L^1(\mathbb{R})
\end{equation}
holds \cite{Podlubny:99,Samko:93}. But the similar statement for fractional derivative
\begin{equation}\label{equation2.21*03}
\mathcal{F}[\,_{a}D_{x}^{\alpha}g(x)](\omega)=\mathcal{F}[\,_{-\infty}D_{x}^{\alpha}g(x)](\omega)=
(-i\omega)^{\alpha}\widehat{g}(\omega)
\end{equation}
is NOT true, unless we make more requirements to $g(x)$, e.g., $g(x)$ and its several derivatives vanish at the end points of the interval.
\end{remark}

\begin{proof}

Firstly, it is well known that \cite{Podlubny:99} for $0\leq l \leq n$, if $f(x)\in C^{m+l-1}[a,b]$, $D^{m+l}f(x) \in L^1[a,b]$, and $D^{k}f(a)=0,~k=0,\cdots,m+l-1$, then
\begin{equation}
\,_{a}D_x^{\alpha+l}f(x)=D^{m+l}\,_{a}D_x^{\alpha-m}f(x)=\,_{a}D_x^{\alpha-m}D^{m+l}f(x)\in L^1[a,b];
\end{equation}
and it is clear that $\widehat{f}(\omega)\in L^1(\mathbb{R})$; further requiring that $D^{k}f(b)=0,~k=0,\cdots,m+l-1$
and combining with (\ref{equation2.21*02}) result in
\begin{eqnarray}
&&\mathcal{F}[\,_{a}D_{x}^{\alpha+l}f(x)](\omega)
\nonumber\\
&=&\mathcal{F}[\,_{a}D_x^{\alpha-m}D^{m+l}f(x)](\omega)
\nonumber\\
&=&(-i\omega)^{\alpha-m}\mathcal{F}[D^{m+l}f(x)](\omega)
\nonumber\\
&=&(-i\omega)^{\alpha-m}(-i\omega)^{m+l}\hat{f}(\omega)
\nonumber\\
&=&(-i\omega)^{\alpha+l}\widehat{f}(\omega)\in L^1(\mathbb{R}).
\end{eqnarray}
Next, we show that $\mathcal{F}\big[\,_{a}D_{x}^{\alpha}f(x+\gamma h)-\delta_{h,p}^\alpha f(x)\big](\omega)\in L^1(\mathbb{R})$ which means that
 $\,_{a}D_{x}^{\alpha}f(x+\gamma h)-\delta_{h,p}^\alpha f(x)\in L^1(\mathbb{R})$; and then (\ref{equation2.21}) holds.

Since
\begin{eqnarray*}
\mathcal{F}[f(x-a)](\omega)=e^{ia\omega}\widehat{f}(\omega),
\end{eqnarray*}
and that
\begin{eqnarray*}
(1-z)^{\alpha}=\sum_{k=0}^{+\infty}g_k^{(\alpha)}z^k
\end{eqnarray*}
converges absolutely for $|z|\leq 1$, we have
\begin{eqnarray}\label{equation2.21*2}
&&\mathcal{F}\big[\,_{a}D_{x}^{\alpha}f(x+\gamma h)-\delta_{h,p}^\alpha f(x)\big](\omega)
\nonumber\\
&=&\Big((-i\omega)^{\alpha}e^{-i\omega\gamma h}-\frac{e^{-i\omega p h}}{h^{\alpha}}
\sum_{k=0}^{+\infty}g_k^{(\alpha)}e^{i\omega kh}\Big)\widehat{f}(\omega)
\nonumber\\
&=&\Big((-i\omega)^{\alpha}e^{-i\omega\gamma h}-\frac{e^{-i\omega p h}}{h^{\alpha}}
(1-e^{i\omega h})^{\alpha}\Big)\widehat{f}(\omega)
\nonumber\\
&=&\Big(e^{-i\omega\gamma h}-e^{-i\omega p h}
\big(\frac{1-e^{i\omega h}}{-i\omega h}\big)^{\alpha}\Big)(-i\omega)^{\alpha}\widehat{f}(\omega)
\nonumber\\
&=&\Big(e^{\gamma z}-e^{pz}
\big(\frac{1-e^{-z}}{z}\big)^{\alpha}\Big)(-i\omega)^{\alpha}\widehat{f}(\omega)
\nonumber\\
&:=&W_{\alpha,p}(z)(-i\omega)^{\alpha}\widehat{f}(\omega),
\end{eqnarray}
where $z=-i\omega h$. Since $W_{\alpha,p}(z)$ is analytic in some neighborhood of the origin, we have the power
series expansion $W_{\alpha,p}(z)=\sum_{l=0}^{\infty}a_l z^l$, which converges absolutely for all $|z|\leq R$ for some $R > 0$.
Note that $a_0=0$.
So
\begin{eqnarray}\label{equation2.21*3}
&&\mathcal{F}\big[\,_{a}D_{x}^{\alpha}f(x+\gamma h)-\delta_{h,p}^\alpha f(x)\big](\omega)
\nonumber\\
&=&\sum_{l=0}^{n-1}a_l (-i\omega )^{\alpha+l}h^l\widehat{f}(\omega)+\widehat{\varphi}(\omega,h),
\end{eqnarray}
where
\begin{eqnarray*}
|\widehat{\varphi}(\omega,h)|
=\big|\big(W_{\alpha,p}(-i\omega h)-\sum_{l=0}^{n-1}a_l (-i\omega h)^l\big)\big|\cdot\big| (-i\omega)^{\alpha}\widehat{f}(\omega)\big|.
\end{eqnarray*}
We next show that there exists a constant $C_1 > 0$ such that
\begin{eqnarray}\label{equation2.21*4}
|\widehat{\varphi}(\omega,h)|
\leq C_1h^n \big|\mathcal{F}\big[\,_{a}D_{x}^{\alpha+n}f(x)\big](\omega)\big|
\end{eqnarray}
uniformly for $\omega h\in \mathbb{R}$. In fact, when $|\omega h|\leq R$, we have
\begin{eqnarray*}
&&\big|\big(W_{\alpha,p}(-i\omega h)-\sum_{l=0}^{n-1}a_l (-i\omega h)^l\big) \big|\cdot\big| (-i\omega)^{\alpha}\widehat{f}(\omega)\big|
\nonumber\\
&=&\big|\sum_{l=n}^{\infty}a_l(-i\omega h)^{l-n}\cdot h^n(-i\omega)^{\alpha+n}\widehat{f}(\omega)\big|
\nonumber\\
&=& \big|\sum_{l=n}^{\infty}a_l(-i\omega h)^{l-n}\big| \cdot h^n\big |(-i\omega)^{\alpha+n}\widehat{f}(\omega)\big|
\nonumber\\
&\leq&C_2 h^n\big|(-i\omega)^{\alpha+n}\widehat{f}(\omega)\big|
\nonumber\\
&=&C_2 h^n \big|\mathcal{F}\big[\,_{a}D_{x}^{\alpha+n}f(x)\big](\omega)\big|;
\end{eqnarray*}
where $C_2=R^{-n}\sum_{l=0}^{\infty}|a_l| R^l<\infty$;
and when $|\omega h|>R$, we have
\begin{eqnarray*}
&&\big|W_{\alpha,p}(-i\omega h)(-i\omega)^{\alpha}\widehat{f}(\omega)\big|
\nonumber\\
&=&\big|\Big(e^{-i\omega\gamma h}-e^{-i\omega p h}
\big(\frac{1-e^{i\omega h}}{-i\omega h}\big)^{\alpha}\Big) \cdot (-i\omega h)^{-n}\cdot
h^n (-i\omega)^{\alpha+n}\widehat{f}(\omega)\big|
\nonumber\\
&=& \big|\Big(e^{-i\omega\gamma h}-e^{-i\omega p h}
\big(\frac{1-e^{i\omega h}}{-i\omega h}\big)^{\alpha}\Big)\big|\cdot \big|(-i\omega h)^{-n}\big|\cdot
h^n \big|(-i\omega)^{\alpha+n}\widehat{f}(\omega)\big|
\nonumber\\
&\leq&C_3 h^n|(-i\omega)^{\alpha+n}\widehat{f}(\omega)|
\nonumber\\
&=&C_3 h^n \big|\mathcal{F}\big[\,_{a}D_{x}^{\alpha+n}f(x)\big](\omega)\big|,
\end{eqnarray*}
where $C_3=\big(1+\frac{2^\alpha}{R^\alpha}\big)\frac{1}{R^n}<\infty$; and
\begin{eqnarray*}
&&|\sum_{l=0}^{n-1}a_l (-i\omega h)^l\cdot(-i\omega)^{\alpha}\widehat{f}(\omega)|
\nonumber\\
&=&\big|\sum_{l=0}^{n-1}a_l (-i\omega h)^{l-n} \cdot
h^n (-i\omega)^{\alpha+n}\widehat{f}(\omega)\big|
\nonumber\\
&\leq&C_4 h^n|(-i\omega)^{\alpha+n}\widehat{f}(\omega)|
\nonumber\\
&=&C_4 h^n \big|\mathcal{F}\big[\,_{a}D_{x}^{\alpha+n}f(x)\big](\omega)\big|,
\end{eqnarray*}
where $C_4=\sum_{l=0}^{n-1}|a_l| R^{l-n}<\infty$.
Now if we set $C_1=\max\{C_2,C_3+C_4\}$, then it follows that (\ref{equation2.21*4}) holds for all
$\omega h\in \mathbb{R}$. Thus $\widehat{\varphi}(\omega,h)\in L^1(\mathbb{R})$ and
$\mathcal{F}\big[\,_{a}D_{x}^{\alpha}f(x+\gamma h)-\delta_{h,p}^\alpha f(x)\big](\omega)\in L^1(\mathbb{R})$.

Performing the inverse Fourier transform on (\ref{equation2.21*3}) leads to
\begin{equation}
\,_{a}D_{x}^{\alpha}f(x+\gamma h)-\delta_{h,p}^\alpha f(x)
=\sum_{l=1}^{n-1}a_l(\gamma,p)\,_{a}D_{x}^{\alpha+l}f(x)h^l+\varphi(x,h),
\end{equation}
where
\begin{equation}
|\varphi(x,h)|=\big|\frac{1}{2 \pi}\int_{\mathbb{R}}e^{-i\omega x}\widehat{\varphi}(\omega,h)d\omega\big|\leq
\frac{1}{2 \pi}\int_{\mathbb{R}}|\widehat{\varphi}(\omega,h)|d\omega\leq Ch^n.
\end{equation}
The proof is completed.
\end{proof}

By simple calculation, we have the coefficients
\begin{equation}\label{equation2.22}
\left\{ \begin{array}{lll}
a_0(\gamma,p)&=&0;\\
a_1(\gamma,p)&=&\gamma-(p-\frac{\alpha}{2});\\
a_2(\gamma,p)&=&-\frac{\alpha}{24}-\frac{1}{2}(p-\frac{\alpha}{2})^2+\frac{\gamma^2}{2};\\
a_3(\gamma,p)&=&\frac{\alpha^3+\alpha^2}{48}-\frac{p(3\alpha^2+\alpha)}{24}
+\frac{p^2\alpha}{4}+\frac{\gamma^3-p^3}{6}.
\end{array} \right.
\end{equation}

Intuitively, in Lemma \ref{lemma2.21} it seems unreasonable to require the regularity of the performed function at the right end point when analyzing its left fractional derivative. Next, we show that this requirement can be dropped. 
\begin{theorem}\label{the2.21}
Let $m-1\leq \alpha<m$, $m,n\in \mathbb{N}^{+}$, $f(x)\in C^{n+m-1}[a,b]$, $D^{n+m}f(x)$ $ \in L^1[a,b]$, and $D^{k}f(a)=0,~k=0,1,\cdots,n+m-1$.
Then for any integer $p$ and a real parameter $\gamma$, there exists
\begin{equation}
\,_{a}D_{x}^{\alpha}f(x+\gamma h)-\delta_{h,p}^\alpha f(x)
=\sum_{l=1}^{n-1}a_l(\gamma,p)\,_{a}D_{x}^{\alpha+l}f(x)h^l+O(h^n)
\end{equation}
for $x+\gamma h\in[a,b]$, where $a_l(\gamma,p)$ are the coefficients of the power series of the function
$W_{\alpha,p}(z)=\bigg(e^{\gamma z}-e^{pz}\big(\frac{1-e^{-z}}{z}\big)^{\alpha}\bigg)$, i.e.,
$W_{\alpha,p}(z)=\sum_{l=0}^{\infty}a_l z^l$.
\end{theorem}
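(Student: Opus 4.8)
The plan is to deduce Theorem~\ref{the2.21} from Lemma~\ref{lemma2.21} by extending $f$ past $x=b$ to a function that \emph{does} satisfy the vanishing conditions at the right endpoint, and then noting that none of the three quantities in the asymptotic expansion is affected by the extension. Concretely, I would fix some $b^{*}>b$ and define $\widetilde f$ on $[a,b^{*}]$ by $\widetilde f=f$ on $[a,b]$ and, on $[b,b^{*}]$, $\widetilde f=\chi\cdot T_{b}$, where $T_{b}$ is the Taylor polynomial of $f$ at $b$ of degree $n+m-1$ and $\chi\in C^{\infty}[b,b^{*}]$ is a cut-off with $\chi\equiv1$ near $b$ and $\chi\equiv0$ near $b^{*}$. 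Because $\chi\equiv1$ near $b$, all one-sided derivatives of $\widetilde f$ up to order $n+m-1$ at $b$ coincide with those of $f$, so $\widetilde f\in C^{n+m-1}[a,b^{*}]$; on $[b,b^{*}]$ the function $\widetilde f$ is $C^{\infty}$, hence $D^{n+m}\widetilde f\in L^{1}[a,b^{*}]$; and $D^{k}\widetilde f(a)=D^{k}f(a)=0$ together with $D^{k}\widetilde f(b^{*})=0$ hold for $k=0,\dots,n+m-1$. Thus $\widetilde f$ meets every hypothesis of Lemma~\ref{lemma2.21} on $[a,b^{*}]$.

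Applying Lemma~\ref{lemma2.21} to $\widetilde f$ on $[a,b^{*}]$ gives, uniformly for $x+\gamma h\in[a,b^{*}]$,
\[
\,_{a}D_{x}^{\alpha}\widetilde f(x+\gamma h)-\delta_{h,p}^{\alpha}\widetilde f(x)=\sum_{l=1}^{n-1}a_{l}(\gamma,p)\,_{a}D_{x}^{\alpha+l}\widetilde f(x)\,h^{l}+O(h^{n}),
\]
with the very same coefficients $a_{l}(\gamma,p)$, since these depend only on $\alpha$, $\gamma$, $p$ through $W_{\alpha,p}$. To transfer this to $f$ I would use that the left Riemann--Liouville integral $\,_{a}D_{y}^{-(m-\alpha)}g(y)$, and therefore $\,_{a}D_{y}^{\alpha}g=D^{m}\,_{a}D_{y}^{-(m-\alpha)}g$ and $\,_{a}D_{y}^{\alpha+l}g$, depend only on the restriction $g|_{[a,y]}$; since $\widetilde f=f$ on $[a,b]$, it follows that $\,_{a}D_{y}^{\alpha}\widetilde f(y)=\,_{a}D_{y}^{\alpha}f(y)$ and $\,_{a}D_{y}^{\alpha+l}\widetilde f(y)=\,_{a}D_{y}^{\alpha+l}f(y)$ for all $y\in[a,b]$. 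Likewise $\delta_{h,p}^{\alpha}\widetilde f(x)$ involves only the values $\widetilde f\big(x+(p-k)h\big)$, $k\ge0$, and with the zero-extension convention of Remark~\ref{remark2.01} we have $\widetilde f=f$ on all of $(-\infty,b]$, so $\delta_{h,p}^{\alpha}\widetilde f(x)=\delta_{h,p}^{\alpha}f(x)$ as soon as the stencil $\{x+(p-k)h:k\ge0\}$ stays in $(-\infty,b]$. Substituting these equalities and restricting to $x+\gamma h\in[a,b]$ then produces the asserted expansion for $f$.

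The only point that needs care is the last one above: the Gr\"unwald stencil of $\delta_{h,p}^{\alpha}f(x)$ must not reach into $(b,b^{*}]$, where $\widetilde f$ and the zero-extended $f$ differ (and the difference, divided by $h^{\alpha}$, would be $O(h^{-\alpha})$ rather than negligible). For $p\le0$ there is nothing to check; for general $p$ one restricts to $x$ with $x+\max\{p,0\}\,h\le b$, which is exactly the range in which the quasi-compact schemes operate and which, for each fixed $x$ with $x+\gamma h\in[a,b)$, holds once $h$ is small enough. It is also worth noting that the $O(h^{n})$ term inherited from Lemma~\ref{lemma2.21} is uniform and harmless here, since in that lemma it is controlled by $\|\mathcal{F}[\,_{a}D_{x}^{\alpha+n}\widetilde f]\|_{L^{1}(\mathbb{R})}$, a finite quantity attached to $\widetilde f$. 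I expect the construction of the extension and the locality bookkeeping to be entirely routine; the genuine content is merely the recognition that the right-endpoint hypothesis in Lemma~\ref{lemma2.21} is an artifact of the Fourier-transform proof and can always be supplied by such an extension.
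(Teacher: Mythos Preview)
Your proposal is correct and follows essentially the same route as the paper: construct an extension of $f$ past $b$ that meets the two-sided vanishing hypotheses of Lemma~\ref{lemma2.21}, apply the lemma to the extension, and then observe that on $[a,b]$ the Riemann--Liouville derivatives and the Gr\"unwald operator coincide for $f$ and its extension. The paper simply asserts that such an extension $y$ on some $[a,b']$ exists and then writes the one-line transfer; your explicit choice $\chi\cdot T_b$ and your discussion of the stencil constraint $x+\max\{p,0\}h\le b$ are in fact more careful than the paper's own argument, which does not spell out this last point.
\emph{(One small correction that does not affect the argument: the difference $\widetilde f - f$ on $(b,b^{*}]$ is $O(1)$, not $O(h^{n+m})$, since the zero-extended $f$ vanishes there while $\widetilde f=\chi T_b$ need not; this is precisely why, as you note, one must keep the stencil inside $(-\infty,b]$.)}
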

\begin{proof}

Firstly, we can always construct a function $y(x)$, which satisfies:
$y(x)\in C^{n+m-1}[a,b']$, $D^{n+m}y(x) \in L^1[a,b']$, and $D^{k}y(a)=D^{k}y(b')=0$, $k=0,1,\cdots,$ $n+m-1$ for some $b'\geq b$;
and $y(x)=f(x)$ for $x\in[a,b]$.
In this way,
\begin{equation}
\,_{a}D_{x}^{\alpha}y(x+\gamma h)-\delta_{h,p}^\alpha y(x)
=\,_{a}D_{x}^{\alpha}f(x+\gamma h)-\delta_{h,p}^\alpha f(x)~~ \textrm{for~} x+\gamma h\in[a,b].
\end{equation}

By Lemma \ref{lemma2.21}, we have
\begin{equation}
\,_{a}D_{x}^{\alpha}y(x+\gamma h)-\delta_{h,p}^\alpha y(x)
=\sum_{l=1}^{n-1}a_l(\gamma,p)\,_{a}D_{x}^{\alpha+l}y(x)h^l+O(h^n)
\end{equation}
 for $x+\gamma h\in[a,b']$; so
\begin{equation}
\,_{a}D_{x}^{\alpha}f(x+\gamma h)-\delta_{h,p}^\alpha f(x)
=\sum_{l=1}^{n-1}a_l(\gamma,p)\,_{a}D_{x}^{\alpha+l}f(x)h^l+O(h^n)
\end{equation}
 for $x+\gamma h\in[a,b]$, which completes the proof.
\end{proof}

From Theorem \ref{the2.21}, we can easily derive the following theorem on the second order quasi-compact approximations in bounded domain and the regularity requirements for the performed functions.
\begin{theorem}\label{the2.23}
Let $f(x)\in C^3[a,b]$, $D^4 f(x) \in L^1[a,b]$, and $D^{k}f(a)=0,~k=0,1,2,3$. Then the quasi-compact approximations corresponding to  (\ref{equation2.111}), (\ref{equation2.114}), and (\ref{equation2.120}) have second order accuracy and
share a genetic form
\begin{eqnarray}\label{equation2.24}
&&c_{-1}\,_{a}D_{x}^{\alpha}f(x-h)+c_{0}\,_{a}D_{x}^{\alpha}f(x)+c_{1}\,_{a}D_{x}^{\alpha}f(x+h)
\nonumber\\
&=&d_{p}\delta^{\alpha}_{h,p}f(x)+d_q\delta^{\alpha}_{h,q}f(x)+O(h^{2}),
\end{eqnarray}
where $(p,q)=(-1,1)$ or $(0,1)$ or $(-1,0)$, and
\begin{equation}\label{equation2.25}
c_{-1}+c_{0}+c_{1}=d_{0}+d_{1}=1.
\end{equation}
Moreover, if $f(x)\in C^4[a,b]$, $D^5f(x) \in L^1[a,b]$, and $D^{k}f(a)=0,~k=0,\cdots,4$,
then the quasi-compact approximations corresponding to (\ref{equation2.111}), (\ref{equation2.114}), and (\ref{equation2.120})
share the following form
\begin{eqnarray}\label{equation2.26}
&&c_{-1}\,_{a}D_{x}^{\alpha}f(x-h)+c_{0}\,_{a}D_{x}^{\alpha}f(x)+c_{1}\,_{a}D_{x}^{\alpha}f(x+h)
\nonumber\\
&=&d_{p}\delta^{\alpha}_{h,p}f(x)+d_q\delta^{\alpha}_{h,q}f(x)+e_2\,_{a}D_{x}^{\alpha+2}f(x)h^2+O(h^{3}),
\end{eqnarray}
where $(p,q)=(-1,1)$ or $(0,1)$ or $(-1,0)$, and
\begin{eqnarray}\label{equation2.27}
&&c_{-1}+c_{0}+c_{1}=d_{0}+d_{1}=1;
\nonumber\\
e_{2}&=&c_{-1}\cdot a_{2}(-1,p)+(d_{p}-c_{-1})\cdot a_{2}(0,p)
\nonumber\\
&&+(d_{q}-c_{1})\cdot a_{2}(0,q)+c_{1}\cdot a_{2}(1,q).
\end{eqnarray}
\end{theorem}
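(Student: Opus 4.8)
The plan is to derive Theorem~\ref{the2.23} from Theorem~\ref{the2.21} with no new analytic ingredient --- only a rearrangement into the common form (\ref{equation2.24}) followed by a fourfold application of the asymptotic expansion. First I would observe that each quasi-compact approximation produced in Section~\ref{subsec:2.1} --- the three instances (\ref{equation2.116}), (\ref{equation2.117}), (\ref{equation2.1177}) of (\ref{equation2.111}), each entry of Table~\ref{tables2.1} coming from (\ref{equation2.114}), and (\ref{equation2.120}) --- can, after using the shift identity (\ref{equation2.14}) to bring every Gr\"{u}nwald operator to the base point $x$ with a shift in $\{-1,0,1\}$ and then collecting the $\,_{a}D_x^{\alpha}f$ terms, be written in the single form (\ref{equation2.24}) with $(p,q)\in\{(-1,1),(0,1),(-1,0)\}$; this rearrangement uses no regularity of $f$. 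The normalization $c_{-1}+c_{0}+c_{1}=d_{p}+d_{q}=1$ of (\ref{equation2.25}) then falls out of the Lagrange interpolation identities $\lambda^{(\beta)}_{m,n}+\lambda^{(\beta)}_{n,m}=1$, $\lambda^{(\beta)}_{-1,0,1}+\lambda^{(\beta)}_{0,1,-1}+\lambda^{(\beta)}_{1,-1,0}=1$, and $\xi^{(\beta)}_{p,q}+\xi^{(\beta)}_{q,p}=1$, which are immediate from (\ref{equation2.16*1}), (\ref{equation2.119}), and (\ref{equation2.18*1}).

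To estimate the error of (\ref{equation2.24}) I would split the coefficient of the middle term as $c_{0}=(d_{p}-c_{-1})+(d_{q}-c_{1})$ --- a split made consistent precisely by $c_{-1}+c_{0}+c_{1}=d_{p}+d_{q}$ --- and apply Theorem~\ref{the2.21} with $n=2$ to the four pieces $\,_{a}D_x^{\alpha}f(x-h)$ (with $\gamma=-1$, shift $p$), one copy of $\,_{a}D_x^{\alpha}f(x)$ (with $\gamma=0$, shift $p$), the other copy of $\,_{a}D_x^{\alpha}f(x)$ (with $\gamma=0$, shift $q$), and $\,_{a}D_x^{\alpha}f(x+h)$ (with $\gamma=1$, shift $q$). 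The hypotheses $f\in C^{3}[a,b]$, $D^{4}f\in L^{1}[a,b]$, $D^{k}f(a)=0$ ($k\le3$) are exactly what Theorem~\ref{the2.21} requires for $n=2$, and they also make $\,_{a}D_x^{\alpha+1}f=\,_{a}D_x^{\alpha-m}D^{m+1}f\in L^{1}[a,b]$, as recorded in the proof of Lemma~\ref{lemma2.21}. Summing the four expansions with weights $c_{-1}$, $d_{p}-c_{-1}$, $d_{q}-c_{1}$, $c_{1}$ reconstructs the left side of (\ref{equation2.24}), collapses the $\delta^{\alpha}$-terms to $d_{p}\delta^{\alpha}_{h,p}f(x)+d_{q}\delta^{\alpha}_{h,q}f(x)$, and leaves the error $e_{1}\,_{a}D_x^{\alpha+1}f(x)\,h+O(h^{2})$ with $e_{1}=c_{-1}a_{1}(-1,p)+(d_{p}-c_{-1})a_{1}(0,p)+(d_{q}-c_{1})a_{1}(0,q)+c_{1}a_{1}(1,q)$, the exact $h^{1}$-analogue of the $e_{2}$ in (\ref{equation2.27}). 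Using $a_{1}(\gamma,p)=\gamma-(p-\alpha/2)$ from (\ref{equation2.22}) together with the normalization, $e_{1}$ reduces to $(c_{1}-c_{-1})+\alpha/2-d_{p}p-d_{q}q$, which one checks vanishes for every one of these approximations --- equivalently, since (\ref{equation2.111}), (\ref{equation2.114}), (\ref{equation2.120}) are already second-order accurate (Section~\ref{subsec:2.1}) and $e_{1}$ is an algebraic expression in $\alpha$ and the weights alone, it must be $0$. (This cancellation is the algebraic shadow of the superconvergence identity $a_{1}(1-\alpha/2,1)=0$ behind (\ref{equation2.15}).) This gives the first claim.

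For the second claim I would rerun the same four-term combination keeping one more order, i.e.\ apply Theorem~\ref{the2.21} with $n=3$; this is what the strengthened hypotheses $f\in C^{4}[a,b]$, $D^{5}f\in L^{1}[a,b]$, $D^{k}f(a)=0$ ($k\le4$) provide. Since $e_{1}=0$ has just been established, the combination now yields $d_{p}\delta^{\alpha}_{h,p}f(x)+d_{q}\delta^{\alpha}_{h,q}f(x)+e_{2}\,_{a}D_x^{\alpha+2}f(x)\,h^{2}+O(h^{3})$ with $e_{2}=c_{-1}a_{2}(-1,p)+(d_{p}-c_{-1})a_{2}(0,p)+(d_{q}-c_{1})a_{2}(0,q)+c_{1}a_{2}(1,q)$, which is precisely (\ref{equation2.26})--(\ref{equation2.27}), and $a_{2}(\gamma,p)$ is taken from (\ref{equation2.22}).

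The one genuinely laborious step is the first: checking, for each of the three quasi-compact versions of (\ref{equation2.111}), each entry of Table~\ref{tables2.1}, and (\ref{equation2.120}), that after the shift rearrangement the left side is supported only on $\{x-h,x,x+h\}$ and that the induced $(c_{-1},c_{0},c_{1},d_{p},d_{q})$ obey (\ref{equation2.25}) and kill $e_{1}$. But this is finite, explicit arithmetic with (\ref{equation2.16*1}), (\ref{equation2.18*1}), and (\ref{equation2.119}), and there is no analytic obstacle: the uniform remainder bound $|\varphi(x,h)|\le Ch^{n}$ from Theorem~\ref{the2.21} guarantees that the four $O(h^{n})$ remainders add up to an honest $O(h^{n})$, so the whole argument is just a finite linear combination of already-proved expansions.
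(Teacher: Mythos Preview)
Your proposal is correct and follows exactly the route the paper intends: the paper does not give a proof of Theorem~\ref{the2.23} at all, only the sentence ``From Theorem~\ref{the2.21}, we can easily derive the following theorem\ldots'', and your argument is the natural way to cash this out. The key device---splitting $c_{0}=(d_{p}-c_{-1})+(d_{q}-c_{1})$ and applying Theorem~\ref{the2.21} four times with the pairs $(\gamma,p)\in\{(-1,p),(0,p),(0,q),(1,q)\}$, then reading off $e_{1}$ and $e_{2}$ as the weighted sums of $a_{1}$ and $a_{2}$---is precisely what the paper's formula~(\ref{equation2.27}) encodes, so you have recovered the intended argument in full detail.
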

%\begin{proof}
%Since the sum of the coefficients in the left hand and right hand of (\ref{equation2.15}), and of Taylor serious expansions
%(\ref{equation2.16}), (\ref{equation2.18}) and (\ref{equation2.118}), all equal to 1, it easy to see that (\ref{equation2.25}) holds.
%
%By Theorem \ref{the2.21} and (\ref{equation2.22}), and because of all the ``practical" approximations in (\ref{equation2.111}), (\ref{equation2.114}) and (\ref{equation2.120})
%have second order accuracy, (\ref{equation2.26}) and (\ref{equation2.27}) hold.
%\end{proof}

%\begin{remark}\label{remark2.2}
%We can see from the above theorem that although $\,_{-\infty}D_{x}^{\alpha}f(x)\in C^2(\mathbb{R})$
%or $\,_{-\infty}D_{x}^{\alpha}f(x)\in C^3(\mathbb{R})$ is supposed during the derivations of
%(\ref{equation2.111}), (\ref{equation2.114}) and (\ref{equation2.120}), weaker limitations
%are enough for them to reach second order accuracies.
%\end{remark}

Now for the convenience of the discussions in the next (sub)sections, we list several specific asymptotic expansions for the quasi-compact approximations given above.
\begin{proposition}
Let $f(x)\in C^4[a,b]$, $D^5f(x) \in L^1[a,b]$, and $D^{k}f(a)=0,~k=0,\cdots,4$.
Then
\begin{eqnarray}\label{equation2.29}
&&(1-\beta)\,_{a}D_{x}^{\alpha}f(x)+\beta\,_{a}D_{x}^{\alpha}f(x+h)-\delta^{\alpha}_{h,1}f(x)
\nonumber\\
&=&\frac{(1-\beta)(6\beta-1)}{12}\,_{a}D_{x}^{\alpha+2}f(x)h^2+O(h^3),
\end{eqnarray}

\begin{eqnarray}\label{equation2.28}
&&\frac{-\beta(1-\beta)}{2}\,_{a}D_{x}^{\alpha}f(x- h)+(1-\beta^2)\,_{a}D_{x}^{\alpha}f(x)+
\frac{\beta(1+\beta)}{2}\,_{a}D_{x}^{\alpha}f(x+h)
\nonumber\\
&&-\delta^{\alpha}_{h,1}f(x)
\nonumber\\
&=&-\frac{1-\beta}{12}\,_{a}D_{x}^{\alpha+2}f(x)h^2+O(h^3),
\end{eqnarray}
and

\begin{eqnarray}\label{equation2.210}
&&\,_{a}D_{x}^{\alpha}f(x)-\big[\beta\delta^{\alpha}_{h,0}f(x)+(1-\beta)\delta^{\alpha}_{h,1}f(x)\big]
\nonumber\\
&=&-\frac{(1-\beta)(6\beta+1)}{12}\,_{a}D_{x}^{\alpha+2}f(x)h^2+O(h^3),
\end{eqnarray}

\begin{eqnarray}\label{equation2.211}
&&\beta(1-\beta)\,_{a}D_{x}^{\alpha}f(x-h)+[\beta^2+(1-\beta)^2]\,_{a}D_{x}^{\alpha}f(x)
\nonumber\\
&&+\beta(1-\beta)\,_{a}D_{x}^{\alpha}f(x+h)
-\big[\beta\delta^{\alpha}_{h,0}f(x)+(1-\beta)\delta^{\alpha}_{h,1}f(x)\big]
\nonumber\\
&=&\frac{(1-\beta)(6\beta-1)}{12}\,_{a}D_{x}^{\alpha+2}f(x)h^2+O(h^3).
\end{eqnarray}

\end{proposition}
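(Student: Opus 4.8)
The plan is to obtain all four identities as direct specializations of Theorem~\ref{the2.21}. Taking $m=2$ (legitimate since $1<\alpha\le 2$) and $n=3$, the hypotheses $f\in C^{n+m-1}[a,b]=C^4[a,b]$, $D^{n+m}f=D^5f\in L^1[a,b]$ and $D^kf(a)=0$ for $k=0,\dots,n+m-1=4$ are precisely those assumed in the proposition, so Theorem~\ref{the2.21} gives, for every integer $p$ and real $\gamma$ with $x+\gamma h\in[a,b]$,
\begin{equation*}
\,_{a}D_{x}^{\alpha}f(x+\gamma h)-\delta^{\alpha}_{h,p}f(x)=a_1(\gamma,p)\,_{a}D_{x}^{\alpha+1}f(x)\,h+a_2(\gamma,p)\,_{a}D_{x}^{\alpha+2}f(x)\,h^2+O(h^3),
\end{equation*}
with $a_1(\gamma,p)=\gamma-(p-\tfrac{\alpha}{2})$ and $a_2(\gamma,p)$ as listed in~(\ref{equation2.22}). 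Everything below is bookkeeping built on this expansion together with $\beta=1-\tfrac{\alpha}{2}$, equivalently $\alpha/24=(1-\beta)/12$.

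First I would, for each of~(\ref{equation2.29})--(\ref{equation2.211}), rewrite the left-hand side as a finite weighted sum of terms $\,_{a}D_{x}^{\alpha}f(x+\gamma h)-\delta^{\alpha}_{h,p}f(x)$ with shifts $\gamma\in\{-1,0,1\}$ matching the arguments $x-h,x,x+h$. This is possible because in each case the weights on the Riemann--Liouville terms sum to the common total of the weights on the Gr\"{u}nwald operators, normalized to $1$ as in~(\ref{equation2.25}). For~(\ref{equation2.29}) and~(\ref{equation2.210}) this is a two-term split (with $p=1$, resp.\ with $p=0$ and $p=1$); for~(\ref{equation2.28}) a three-term split with $p=1$; and for~(\ref{equation2.211}) the cleanest route is the ``mixed'' combination
\begin{align*}
&\beta\Big[(1-\beta)\big(\,_{a}D_{x}^{\alpha}f(x-h)-\delta^{\alpha}_{h,0}f(x)\big)+\beta\big(\,_{a}D_{x}^{\alpha}f(x)-\delta^{\alpha}_{h,0}f(x)\big)\Big]\\
&\qquad+(1-\beta)\Big[(1-\beta)\big(\,_{a}D_{x}^{\alpha}f(x)-\delta^{\alpha}_{h,1}f(x)\big)+\beta\big(\,_{a}D_{x}^{\alpha}f(x+h)-\delta^{\alpha}_{h,1}f(x)\big)\Big],
\end{align*}
whose $\delta$-terms recombine into $\beta\delta^{\alpha}_{h,0}f(x)+(1-\beta)\delta^{\alpha}_{h,1}f(x)$.

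Next I would substitute the Theorem~\ref{the2.21} expansion into each summand and collect powers of $h$. The $h^1$-coefficient is a weighted sum of $a_1(\gamma,p)=\gamma-(p-\tfrac{\alpha}{2})$, and in each of the four cases it vanishes identically; this is the one genuine check, and it is exactly why the weights $\lambda^{(\beta)}_{\cdots}$, $\xi^{(\beta)}_{\cdots}$ are built around $\beta=1-\tfrac{\alpha}{2}$ (e.g.\ in~(\ref{equation2.29}), $(1-\beta)a_1(0,1)+\beta a_1(1,1)=-\beta(1-\beta)+\beta(1-\beta)=0$). The $h^2$-coefficient is then the corresponding weighted sum of $a_2(\gamma,p)$; eliminating $\alpha$ via $\alpha=2(1-\beta)$, it collapses to $\tfrac{(1-\beta)(6\beta-1)}{12}$, $-\tfrac{1-\beta}{12}$, $-\tfrac{(1-\beta)(6\beta+1)}{12}$, and $\tfrac{(1-\beta)(6\beta-1)}{12}$ in~(\ref{equation2.29}), (\ref{equation2.28}), (\ref{equation2.210}), (\ref{equation2.211}) respectively, while the $O(h^3)$ remainders of the finitely many summands combine into one $O(h^3)$.

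I do not anticipate any substantive obstacle: once Theorem~\ref{the2.21} and the coefficient list~(\ref{equation2.22}) are in hand, the proposition is a routine computation. The only points needing care are (i) pairing each Riemann--Liouville evaluation with the right $\gamma$ and each Gr\"{u}nwald operator with the right $p$ --- in particular, that~(\ref{equation2.210})--(\ref{equation2.211}) mix $p=0$ and $p=1$ whereas~(\ref{equation2.29})--(\ref{equation2.28}) use only $p=1$ --- and (ii) exhibiting the $h^1$ cancellation before reading off the $h^2$ coefficient, so that the asserted leading error terms are correctly identified.
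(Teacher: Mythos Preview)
Your proposal is correct and follows essentially the same route as the paper. The paper does not give an explicit proof of this proposition; it is presented as a list of specific instances of the general second-order form~(\ref{equation2.26})--(\ref{equation2.27}) in Theorem~\ref{the2.23}, which in turn rests on Theorem~\ref{the2.21} with $n=3$ and the coefficient list~(\ref{equation2.22}) --- exactly the ingredients you invoke, with the $h^1$ cancellation and the $h^2$ coefficient computation carried out just as you describe.
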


\subsection{Derivation of a series of higher order approximations}\label{subsec:2.3}

In this subsection, we focus on stating a basic strategy to derive any desired high order approximation. As an illustrating example, several third order approximations are specifically deduced.  Combining any two of the second order quasi-compact approximations listed above can lead to a third order approximation; so the number of different third approximations is much greater than the second order's one. Along this direction, letting  $f(x)\in C^{l+3}[a,b]$, $D^{l+4}f(x) \in L^1[a,b]$, and $D^{k}f(a)=0,~k=0,\cdots,l+3$,
from any two of the quasi-compact low order approximations, say
\begin{eqnarray}\label{equation2.34}
&&c_{-1,i}\,_{a}D_{x}^{\alpha}f(x-h)+c_{0,i}\,_{a}D_{x}^{\alpha}f(x)+c_{1,i}\,_{a}D_{x}^{\alpha}f(x+h)
\nonumber\\
&=&d_{-1,i}\delta^{\alpha}_{h,-1}f(x)+d_{0,i}\delta^{\alpha}_{h,0}f(x)+d_{1,i}\delta^{\alpha}_{h,1}f(x)+e_{l,i}h^l+O(h^{l+1}),
\end{eqnarray}
and
\begin{eqnarray}\label{equation2.35}
&&c_{-1,j}\,_{a}D_{x}^{\alpha}f(x-h)+c_{0,j}\,_{a}D_{x}^{\alpha}f(x)+c_{1,j}\,_{a}D_{x}^{\alpha}f(x+h)
\nonumber\\
&=&d_{-1,j}\delta^{\alpha}_{h,-1}f(x)+d_{0,j}\delta^{\alpha}_{h,0}f(x)+d_{1,j}\delta^{\alpha}_{h,1}f(x)+e_{l,j}h^l+O(h^{l+1}),
\end{eqnarray}
we can get a higher order one
\begin{eqnarray}\label{equation2.36}
&&\tilde{c}_{-1}\,_{a}D_{x}^{\alpha}f(x-h)+\tilde{c}_{0}\,_{a}D_{x}^{\alpha}f(x)+\tilde{c}_{1}\,_{a}D_{x}^{\alpha}f(x+h)
\nonumber\\
&=&\tilde{d}_{-1}\delta^{\alpha}_{h,-1}f(x)+\tilde{d}_{0}\delta^{\alpha}_{h,0}f(x)+\tilde{d}_1\delta^{\alpha}_{h,1}f(x)+\tilde{e}_{l+1}(h^{l+1})+O(h^{l+2}),
\end{eqnarray}
where
\begin{equation}\label{equation2.37}
\left\{ \begin{array}{lll}
\tilde{c}_{-1}&=&e_{j,l}c_{-1,i}-e_{i,l}c_{-1,j};\\
\tilde{c}_{0}&=&e_{j,l}c_{0,i}-e_{i,l}c_{0,j};\\
\tilde{c}_{1}&=&e_{j,l}c_{1,i}-e_{i,l}c_{1,j};\\
\tilde{d}_{-1}&=&e_{j,l}d_{-1,i}-e_{i,l}d_{-1,j};\\
\tilde{d}_{0}&=&e_{j,l}d_{0,i}-e_{i,l}d_{0,j};\\
\tilde{d}_{1}&=&e_{j,l}d_{1,i}-e_{i,l}d_{1,j};\\
\tilde{c}_{-1}&+&\tilde{c}_{0}+\tilde{c}_{1}=\tilde{d}_{-1}+\tilde{d}_{0}+\tilde{d}_{1};\\
\tilde{e}_{l+1}&=&\tilde{c}_{-1}\cdot a_{l+1}(-1,-1)+(\tilde{d}_{-1}-\tilde{c}_{-1})\cdot a_{l+1}(0,-1)\\
&&+d_0\cdot a_{l+1}(0,0)+(\tilde{d}_{1}-\tilde{c}_{1})\cdot a_{l+1}(0,1)+\tilde{c}_{1}\cdot a_{l+1}(1,1).
\end{array} \right.
\end{equation}

Now from the second order quasi-compact approximations (\ref{equation2.29})-(\ref{equation2.211}), we deduce and list the following four different third order quasi-compact approximations. Under the assumptions: $f(x)\in C^4[a,b]$, $D^5f(x) \in L^1[a,b]$, and $D^{k}f(a)=0,~k=0,\cdots,4$;
by calculating: $\frac{1-\beta}{12}\times(\ref{equation2.29})+\frac{(1-\beta)(6\beta-1)}{12}\times(\ref{equation2.28})$, we get
\begin{eqnarray}\label{equation2.38}
&&-\frac{\beta(1-\beta)^2(6\beta-1)}{24}\,_{a}D_{x}^{\alpha}f(x-h)+
\frac{\beta(1-\beta)^2(6\beta+5)}{12}\,_{a}D_{x}^{\alpha}f(x)
\nonumber\\
&&+\frac{\beta(1-\beta)(2\beta+1)(3\beta+1)}{24}\,_{a}D_{x}^{\alpha}f(x+h)
\nonumber\\
&=&\frac{\beta(1-\beta)}{2}\delta^{\alpha}_{h,1}f(x)+O(h^3);
\end{eqnarray}
from $\frac{(1-\beta)(6\beta+1)}{12}\times(\ref{equation2.29})+\frac{(1-\beta)(6\beta-1)}{12}\times(\ref{equation2.210})$, we have
\begin{eqnarray}\label{equation2.311}
&&\frac{\beta(1-\beta)(11-6\beta)}{12}\,_{a}D_{x}^{\alpha}f(x)+
\frac{\beta(1-\beta)(6\beta+1)}{12}\,_{a}D_{x}^{\alpha}f(x+h)
\nonumber\\
&=&\frac{\beta(1-\beta)(6\beta-1)}{12}\delta^{\alpha}_{h,0}f(x)+\frac{\beta(1-\beta)(13-6\beta)}{12}\delta^{\alpha}_{h,1}f(x)+O(h^3);
\end{eqnarray}
according to $-\frac{(1-\beta)(6\beta-1)}{12}\times(\ref{equation2.29})+\frac{(1-\beta)(6\beta-1)}{12}\times(\ref{equation2.211})$, there exists
\begin{eqnarray}\label{equation2.312}
&&-\frac{\beta(1-\beta)^2(6\beta-1)}{12}\,_{a}D_{x}^{\alpha}f(x-h)-
\frac{\beta(1-\beta)(6\beta-1)(2\beta-1)}{12}\,_{a}D_{x}^{\alpha}f(x)
\nonumber\\
&&+\frac{\beta^2(1-\beta)(6\beta-1)}{12}\,_{a}D_{x}^{\alpha}f(x+h)
\nonumber\\
&=&\frac{\beta(1-\beta)(6\beta-1)}{12}\big[-\delta^{\alpha}_{h,0}f(x)+\delta^{\alpha}_{h,1}f(x)\big]+O(h^3);
\end{eqnarray}
by calculating:
$\frac{(1-\beta)(6\beta-1)}{12}\times(\ref{equation2.210})+\frac{(1-\beta)(6\beta+1)}{12}\times(\ref{equation2.211})$, there is
\begin{eqnarray}\label{equation2.313}
&&\frac{\beta(1-\beta)^2(6\beta+1)}{12}\,_{a}D_{x}^{\alpha}f(x-h)+
\frac{\beta(1-\beta)(6\beta^2-5\beta+5)}{6}\,_{a}D_{x}^{\alpha}f(x)
\nonumber\\
&&+\frac{\beta(1-\beta)^2(6\beta+1)}{12}\,_{a}D_{x}^{\alpha}f(x+h)
\nonumber\\
&=&\beta(1-\beta)\big[\beta\delta^{\alpha}_{h,0}f(x)+(1-\beta)\delta^{\alpha}_{h,1}f(x)\big]+O(h^3).
\end{eqnarray}

\begin{remark}\label{remark2.31}
It can be noted that if $\alpha=2$ and $\beta=1-\alpha/2=0$, then all the stable (which means that the derived scheme is stable when the discretization is used to solve space fractional diffusion equation; see next section) second order quasi-compact approximations
reduce to the standard centered difference operator
\begin{equation}
D^2f(x)=\frac{f(x-h)-2f(x)+f(x+h)}{h^2}+O(h^2).
\end{equation}

For the third order approximations firstly dividing $\beta$ in both sides of them and then letting $\beta \rightarrow 0$, we get the third order quasi-compact approximations of some linear combinations of the classical second order derivatives.
\end{remark}

%therefore, their linear combinations are actually trivial, which is expressed in
%the third order ``practical" approximations by the shared common factor $\beta$
%in the coefficients
%$c_i,i=-1,0,1,$ and $~d_j,~j=-1,0,1$. Fortunately, if $\beta$ were reduced from the both hands, these ``practical" approximations
%still hold three order accuracies for $\beta=0$.
%In fact, we do deal with the final matrix form (\ref{equation3.18}) as well as the analysis in Section \ref{sec:3} and numerical
%experiments in Section \ref{sec:4} in this way.
%
%
%If $f(x)\in C^{5}[a,b]$, $D^{6}f(x) \in L^1[a,b]$, and $D^{k}f(a),~k=0,\cdots,5$,
%more fourth order approximations can be obtained mechanically as (\ref{equation2.34})-(\ref{equation2.37}).
%And even more higher order approximations can be derived in similar ways. Considering their tediousness, we leave out the detailed dada, just show some numerical results in the experiments section.

\section{Stable high order schemes for fractional diffusion problems}\label{sec:3}

Based on the high order approximations to the linear combinations of the Riemann-Liouville space fractional derivatives, we develop a series of high order quasi-compact Crank-Nicolson type scheme for the problem (\ref{equation1.1}). Then we perform the detailed stability and convergence analysis for several  derived schemes. And some of the error estimates are also discussed.

\subsection{Derivation of the general numerical scheme}\label{subsec3.1}

We partition the interval $[x_L,x_R]$ into an uniform mesh with the space stepsize $h=(x_R-x_L)/N$ and the time steplength
$\tau=T/M$, where $N,~M$ are two positive integers. The sets of mesh points are denoted by $x_i=ih$ for $1\leq i\leq N-1$
and $t_n=n\tau$ for $0\leq n \leq M$. Let $t_{n+1/2}=(t_n+t_{n+1})/2$ for $0\leq n \leq M-1$. And the
following notations are used in the sections below
\begin{equation}\label{equation3.11}
u_i^n=u(x_i,t_n),~~~~~f_i^{n+1/2}=f(x_i,t_{n+1/2}).
\end{equation}

For the space discretization, by combining the general quasi-compact form (\ref{equation2.34}) with (\ref{equation1.1})
($c_{-1}$ must equal to $c_1$, if $K_1K_2\neq 0$), it yields that, for a fixed node $x_i$, $1\leq i \leq N-1,$
\begin{eqnarray}\label{equation3.12}
&&c_{-1}\frac{\partial u(x_{i-1},t)}{\partial t}+c_{0}\frac{\partial u(x_i,t)}{\partial t}
+c_{1}\frac{\partial u(x_{i+1},t)}{\partial t}
\nonumber\\
&=&K_1\big[d_{-1}\delta^{\alpha}_{h,-1}u(x_i,t)+d_0\delta^{\alpha}_{h,0}u(x_i,t)+d_1\delta^{\alpha}_{h,1}u(x_i,t)\big]
\nonumber\\
&&+K_2\big[d_{-1}\sigma^{\alpha}_{h,-1}u(x_i,t)+d_0\sigma^{\alpha}_{h,0}u(x_i,t)+d_1\sigma^{\alpha}_{h,1}u(x_i,t)\big]
\nonumber\\
&&~+c_{-1}f(x_{i-1},t)+c_{0}f(x_{i},t)+c_{1}f(x_{i+1},t)+O(h^l),
\end{eqnarray}
where $l=2,3,4,\cdots$.
Using the Crank-Nicolson (CN) technique to discretize the time derivative of (\ref{equation1.1}) leads to
\begin{eqnarray}\label{equation3.13}
&&c_{-1}\frac{u_{i-1}^{n+1}-u_{i-1}^{n}}{\tau}+c_{0}\frac{u_{i}^{n+1}-u_{i}^{n}}{\tau}
+c_{1}\frac{u_{i+1}^{n+1}-u_{i+1}^{n}}{\tau}
\nonumber\\
&=&K_1\big(d_{-1}\delta^{\alpha}_{h,-1}u(x_i,t)+d_0\delta^{\alpha}_{h,0}+d_1\delta^{\alpha}_{h,1}\big)\frac{u_i^{n}+u_i^{n+1}}{2}
\nonumber\\
&&+K_2\big(d_{-1}\sigma^{\alpha}_{h,-1}u(x_i,t)+d_0\sigma^{\alpha}_{h,0}+d_1\sigma^{\alpha}_{h,1}\big)\frac{u_i^{n}+u_i^{n+1}}{2}
\nonumber\\
&&~+c_{-1}f_{i-1}^{n+1/2}+c_{0}f_{i}^{n+1/2}+c_{1}f_{i+1}^{n+1/2}+O(h^l+\tau^2),
\end{eqnarray}
i.e.,
\begin{eqnarray}\label{equation3.14}
&&c_{-1}u_{i-1}^{n+1}+c_{0}u_{i}^{n+1}+c_{1}u_{i+1}^{n+1}
\nonumber\\
&&-\frac{K_1\tau}{2h^{\alpha}}\sum_{k=0}^{i+1} w_k^{(\alpha)} u_{i-k+1}^{n+1}
-\frac{K_2\tau}{2h^{\alpha}}\sum_{k=0}^{i+1} w_k^{(\alpha)} u_{i+k-1}^{n+1}
\nonumber\\
&=&c_{-1}u_{i-1}^{n}+c_{0}u_{i}^{n}+c_{1}u_{i+1}^{n}
+\frac{K_1\tau}{2h^{\alpha}}\sum_{k=0}^{i+1} w_k^{(\alpha)} u_{i-k+1}^{n}
+\frac{K_2\tau}{2h^{\alpha}}\sum_{k=0}^{i+1} w_k^{(\alpha)} u_{i+k-1}^{n}
\nonumber\\
&&~+\tau\big(c_{-1}f_{i-1}^{n+1/2}+c_{0}f_{i}^{n+1/2}+c_{1}f_{i+1}^{n+1/2}\big)+O(\tau h^l+\tau^3),
\end{eqnarray}
where
\begin{equation}\label{equation3.15}
\left\{ \begin{array}{lll}
w_0^{(\alpha)}&=&d_1 g_0^{(\alpha)},\\
w_1^{(\alpha)}&=&d_0 g_0^{(\alpha)}+d_1 g_1^{(\alpha)},\\
w_k^{(\alpha)}&=&d_{-1} g_{k-2}^{(\alpha)}+d_0 g_{k-1}^{(\alpha)}+d_1 g_k^{(\alpha)},~k\geq 2.
\end{array} \right.
\end{equation}

Denoting $U_{i}^{n}$ as the numerical approximation of $u_{i}^{n}$, the so-called
CN-quasi-compact scheme for (\ref{equation1.1}) is obtained as
\begin{eqnarray}\label{equation3.16}
&&c_{-1}U_{i-1}^{n+1}+c_{0}U_{i}^{n+1}+c_{1}U_{i+1}^{n+1}
\nonumber\\
&&-\frac{K_1\tau}{2h^{\alpha}}\sum_{k=0}^{i+1} w_k^{(\alpha)} U_{i-k+1}^{n+1}
-\frac{K_2\tau}{2h^{\alpha}}\sum_{k=0}^{i+1} w_k^{(\alpha)} U_{i+k-1}^{n+1}
\nonumber\\
&=&c_{-1}U_{i-1}^{n}+c_{0}U_{i}^{n}+c_{1}U_{i+1}^{n}
+\frac{K_1\tau}{2h^{\alpha}}\sum_{k=0}^{i+1} w_k^{(\alpha)} U_{i-k+1}^{n}
\nonumber\\
&&+\frac{K_2\tau}{2h^{\alpha}}\sum_{k=0}^{i+1} w_k^{(\alpha)} U_{i+k-1}^{n}
+\tau\big(c_{-1}f_{i-1}^{n+1/2}+c_{0}f_{i}^{n+1/2}+c_{1}f_{i+1}^{n+1/2}\big).
\end{eqnarray}

Defining the column vectors
\begin{eqnarray*}\label{equation3.17}
\textbf{U}^n&=&\big(U_{1}^n,U_{2}^n,\cdots,U_{N-1}^{n}\big)^T,\\
\widetilde{\textbf{F}}^n&=&\big(f_0^{n+1/2},f_{1}^{n+1/2},\cdots,f_{N}^{n+1/2}\big)^T,\\
\textbf{F}^n&=&c_{-1}\widetilde{\textbf{F}}^n(0:N-2)+c_{0}\widetilde{\textbf{F}}^n(1:N-1)+c_{1}\widetilde{\textbf{F}}^n(2:N),
\end{eqnarray*}
Eq. (\ref{equation3.16}) can be recast in matrix form as
\begin{equation}\label{equation3.18}
\big(\textbf{T}-\frac{\tau}{2h^{\alpha}}(K_1 \textbf{A}+K_2 \textbf{A}^{T})\big) \textbf{U}^{n+1}
=\big(\textbf{T}+\frac{\tau}{2h^{\alpha}}(K_1 \textbf{A}+K_2 \textbf{A}^{T})\big) \textbf{U}^{n}+\tau \textbf{F}^n+\textbf{H}^{n},
\end{equation}
where
\begin{equation}\label{equation3.19}
\textbf{T}=\left[ \begin{array}{ccccc}
c_{0}   &   c_{1}  &       &&           \\
c_{-1}  &   c_{0}  &  c_{1}   &&          \\
        &\ddots  &  \ddots    &   \ddots  &\\
        &        &  c_{-1}  &   c_{0}   &  c_{1} \\
        &        &          &   c_{-1}  &  c_{0}
\end{array}
\right],
\end{equation}

\begin{equation}\label{equation3.110}
\textbf{A}=\left [ \begin{array}{cccccc}
w_{1}^{(\alpha)}   &   w_{0}^{(\alpha)}  &       &&&            \\
w_{2}^{(\alpha)}  &   w_{1}^{(\alpha)}  &  w_{0}^{(\alpha)} &&&         \\
        &\ddots  &  \ddots    &   \ddots && \\
w_{N-2}^{(\alpha)} & w_{N-3}^{(\alpha)}& \cdots  &  w_{2}^{(\alpha)} &w_{1}^{(\alpha)}   &  w_{0}^{(\alpha)} \\
w_{N-1}^{(\alpha)} & w_{N-2}^{(\alpha)} & \cdots  & w_{3}^{(\alpha)}& w_{2}^{(\alpha)}  &  w_{1}^{(\alpha)}
\end{array}
\right ],
\end{equation}
are Toeplitz matrices, and
\begin{eqnarray}\label{equation3.111}
\textbf{H}^n&=&\left [ \begin{array}{c}
c_{-1}\\
0\\
\vdots\\
0
\end{array}\right ](\textbf{U}_0^n-\textbf{U}_0^{n+1})
+
\left [ \begin{array}{c}
0\\
0\\
\vdots\\
c_{1}
\end{array}\right ](\textbf{U}_N^{n}-\textbf{U}_N^{n+1})
\nonumber\\
&&+\frac{\tau}{2h^{\alpha}}\left [ \begin{array}{c}
K_1 w_2^{(\alpha)}+K_2w_0^{(\alpha)}\\
K_1w_3^{(\alpha)}\\
\vdots\\
K_1w_{N-1}^{(\alpha)}\\
K_1w_{N}^{(\alpha)}
\end{array}\right ](\textbf{U}_0^n+\textbf{U}_0^{n+1})
\nonumber\\
&&+\frac{\tau}{2h^{\alpha}}\left [ \begin{array}{c}
K_2 w_N^{(\alpha)}\\
K_2w_{N-1}^{(\alpha)}\\
\vdots\\
K_2w_{3}^{(\alpha)}\\
K_1w_{0}^{(\alpha)}+K_2w_{2}^{(\alpha)}
\end{array}\right ](\textbf{U}_N^{n}+\textbf{U}_N^{n+1})
\end{eqnarray}
is the vector obtained by imposing the boundary conditions.

\begin{remark}\label{remark3.11}
If $K_1K_2 \neq 0$ in (\ref{equation1.1}), $c_{-1}$ must equal to $c_1$, like the ones in second order approximations
(\ref{equation2.210})-(\ref{equation2.211}) and third order approximation (\ref{equation2.313}).
In these cases, the matrices $\textbf{T}$ in the corresponding matrix forms (\ref{equation3.18}) are symmetric.
Generally speaking, if $c_{-1}\neq c_1$ it is hard/impossible to get a high order scheme for (\ref{equation1.1}) with $K_1K_2\neq 0$. However, for second order approximations, as do in \cite{Nasir:13}, we can firstly plug (\ref{equation2.15}) into
the two-sided problem (\ref{equation1.1}), and then expand $\frac{\partial u(x+\beta h,t)}{\partial t}$ in Taylor's series w.r.t $x$,
similar to the way of getting (\ref{equation2.116}), (\ref{equation2.117}) and (\ref{equation2.120}).
That is,
\begin{eqnarray}\label{equation3.112}
&&-\beta\frac{ u(x-h,t+\tau)-u(x-h,t)}{\tau}+(1+\beta)\frac{u(x,t+
\tau)-u(x,t)}{\tau}
\nonumber\\
&=&K_1\delta^{\alpha}_{h,1}u(x,t+\frac{\tau}{2})+K_2\sigma^{\alpha}_{h,1}u(x,t+\frac{\tau}{2})
\nonumber\\
&&+f(x+\beta h,t+\frac{\tau}{2})+O(h^2+\tau^2),
\end{eqnarray}

\begin{eqnarray}\label{equation3.113}
&&(1-\beta)\frac{u(x,t+
\tau)-u(x,t)}{\tau}+\beta\frac{u(x+h,t+\tau)-u(x+h,t)}{\tau}
\nonumber\\
&=&K_1\delta^{\alpha}_{h,1}u(x,t+\frac{\tau}{2})+K_2\sigma^{\alpha}_{h,1}u(x,t+\frac{\tau}{2})
\nonumber\\
&&+f(x+\beta h,t+\frac{\tau}{2}))+O(h^2+\tau^2),
\end{eqnarray}

\begin{eqnarray}\label{equation3.113}
&&-\frac{\beta(1-\beta)}{2}\cdot\frac{ u(x-h,t+\tau)-u(x-h,t)}{\tau}+(1-\beta^2)\frac{u(x,t+
\tau)-u(x,t)}{\tau}
\nonumber\\
&&+\frac{\beta(1+\beta)}{2}\cdot\frac{u(x+h,t+\tau)-u(x+h,t)}{\tau}
\nonumber\\
&=&K_1\delta^{\alpha}_{h,1}u(x,t+\frac{\tau}{2})+K_2\sigma^{\alpha}_{h,1}u(x,t+\frac{\tau}{2})
\nonumber\\
&&+f(x+\beta h,t+\frac{\tau}{2})+O(h^2+\tau^2).
\end{eqnarray}
\end{remark}

%\begin{remark}\label{remark3.12}
%From the matrix form (\ref{equation3.18})-(\ref{equation3.111}) we can see more clearly that
%``practical" schemes do not have to exploit notes information that lay out of boundary.
%\end{remark}

\subsection{Properties of the matrices for the derived schemes}\label{subsec3.2}

In this subsection we prove that the differential matrix $\textbf{A}$ in (\ref{equation3.110}) is negative definite for some of the second order and third order quasi-compact schemes.  First, let us list some preliminary results.

\begin{definition} (\cite{Quarteroni:07})\label{definition3.21}
A matrix $\textbf{A} \in \mathbb{R}^{n\times n}$ is said to be positive definite in $\mathbb{R}^{n}$,
if $\textbf{x}^T\textbf{A}\textbf{x}>0$ for all $\textbf{x} \in \mathbb{R}^{n}$, $\textbf{x}\neq \textbf{0}$.
\end{definition}

\begin{lemma} (\cite{Quarteroni:07})\label{lemma3.21}
A real matrix $\textbf{A}$ of order $n$ is positive definite  if and only if
its symmetric part $\textbf{H}=\frac{\textbf{A}+\textbf{A}^T}{2}$ is positive definite.
Let $\textbf{H} \in \mathbb{R}^{n\times n}$ be symmetric. Then $\textbf{H}$ is positive definite
if and only if the eigenvalues of $\textbf{H}$ are positive.
\end{lemma}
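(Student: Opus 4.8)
The plan is to treat the two assertions of the lemma separately, each by an elementary argument. For the first equivalence — that $\textbf{A}$ is positive definite if and only if its symmetric part $\textbf{H}=\frac{\textbf{A}+\textbf{A}^T}{2}$ is — I would use the observation that $\textbf{x}^T\textbf{A}\textbf{x}$ is a $1\times 1$ matrix and hence equals its own transpose: $\textbf{x}^T\textbf{A}\textbf{x}=(\textbf{x}^T\textbf{A}\textbf{x})^T=\textbf{x}^T\textbf{A}^T\textbf{x}$ for every $\textbf{x}\in\mathbb{R}^n$. Averaging the two expressions gives $\textbf{x}^T\textbf{A}\textbf{x}=\textbf{x}^T\textbf{H}\textbf{x}$ identically in $\textbf{x}$, so the defining condition ``$\textbf{x}^T(\cdot)\textbf{x}>0$ for all $\textbf{x}\neq\textbf{0}$'' holds for $\textbf{A}$ exactly when it holds for $\textbf{H}$. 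This step is purely formal and presents no real obstacle.

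For the second assertion I would invoke the spectral theorem for real symmetric matrices: there is an orthogonal matrix $\textbf{Q}$ with $\textbf{Q}^T\textbf{H}\textbf{Q}=\Lambda=\mathrm{diag}(\lambda_1,\dots,\lambda_n)$, the $\lambda_i$ being the (necessarily real) eigenvalues of $\textbf{H}$. Substituting $\textbf{x}=\textbf{Q}\textbf{y}$ and noting that $\textbf{x}\mapsto\textbf{y}=\textbf{Q}^T\textbf{x}$ is a bijection of $\mathbb{R}^n$ that preserves the property of being nonzero, one obtains $\textbf{x}^T\textbf{H}\textbf{x}=\textbf{y}^T\Lambda\textbf{y}=\sum_{i=1}^{n}\lambda_i y_i^2$. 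If every $\lambda_i>0$, this sum is strictly positive whenever $\textbf{y}\neq\textbf{0}$, so $\textbf{H}$ is positive definite. Conversely, if $\textbf{H}$ is positive definite and $\textbf{v}$ is an eigenvector with eigenvalue $\lambda$, then $0<\textbf{v}^T\textbf{H}\textbf{v}=\lambda\,\textbf{v}^T\textbf{v}$, forcing $\lambda>0$; since every eigenvalue occurs this way, all eigenvalues of $\textbf{H}$ are positive.

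The only ingredient here that is not elementary bookkeeping is the spectral theorem, which I would simply cite. Combining the two parts then yields the usable corollary for the sequel: $\textbf{A}$ is positive definite precisely when all eigenvalues of $\frac{\textbf{A}+\textbf{A}^T}{2}$ are positive, which is exactly what the lemma records.
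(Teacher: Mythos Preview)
Your proof is correct and follows the standard textbook argument. Note, however, that the paper itself does not prove this lemma at all: it is simply quoted from \cite{Quarteroni:07} and used as a black box, so there is no ``paper's own proof'' to compare against. Your elementary derivation --- the scalar identity $\textbf{x}^T\textbf{A}\textbf{x}=\textbf{x}^T\textbf{A}^T\textbf{x}$ for the first part and the spectral theorem for the second --- is exactly the kind of argument one finds in the cited reference, and nothing more is needed here.
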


\begin{lemma} (\cite{Quarteroni:07})\label{lemma3.22}
If $\textbf{A} \in \mathbb{C}^{n \times n}$, let $\textbf{H}=\frac{\textbf{A}+\textbf{A}^H}{2}$ be the hermitian part of $\textbf{A}$.
Then for any eigenvalue $\lambda$ of  $\textbf{A}$,
the real part $Re(\lambda(\textbf{A}))$ satisfies
\begin{equation}
\lambda_{\min}(\textbf{H}) \leq Re(\lambda(\textbf{A})) \leq \lambda_{\max}(\textbf{H}),
\end{equation}
where $\lambda_{\min}(\textbf{H})$ and $\lambda_{\max}(\textbf{H})$ are the minimum and maximum of the eigenvalues of $\textbf{H}$, respectively.
\end{lemma}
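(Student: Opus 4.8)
The plan is to reduce the statement to the Rayleigh--Ritz characterization of the extreme eigenvalues of a Hermitian matrix. First I would let $\lambda$ be an arbitrary eigenvalue of $\textbf{A}$ and choose a corresponding eigenvector $\textbf{x}\in\mathbb{C}^{n}$, normalized so that $\textbf{x}^{H}\textbf{x}=1$; then $\textbf{A}\textbf{x}=\lambda\textbf{x}$ gives $\textbf{x}^{H}\textbf{A}\textbf{x}=\lambda$. Since $\textbf{x}^{H}\textbf{A}\textbf{x}$ is a scalar, its complex conjugate coincides with its conjugate transpose, so $\overline{\lambda}=(\textbf{x}^{H}\textbf{A}\textbf{x})^{H}=\textbf{x}^{H}\textbf{A}^{H}\textbf{x}$. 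Adding the two identities and dividing by $2$,
\[
\textbf{x}^{H}\textbf{H}\textbf{x}=\frac{1}{2}\big(\textbf{x}^{H}\textbf{A}\textbf{x}+\textbf{x}^{H}\textbf{A}^{H}\textbf{x}\big)=\frac{\lambda+\overline{\lambda}}{2}=Re(\lambda).
\]
Hence the real part of any eigenvalue of $\textbf{A}$ is realized as the value of the Rayleigh quotient $\textbf{x}^{H}\textbf{H}\textbf{x}/\textbf{x}^{H}\textbf{x}$ of the Hermitian matrix $\textbf{H}$ at a unit vector.

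Second I would bound this Rayleigh quotient by invoking the spectral theorem for $\textbf{H}$: there exist a unitary $\textbf{Q}$ and real numbers $\mu_{1}\leq\cdots\leq\mu_{n}$ (the eigenvalues of $\textbf{H}$) with $\textbf{H}=\textbf{Q}\,\mathrm{diag}(\mu_{1},\ldots,\mu_{n})\,\textbf{Q}^{H}$. Setting $\textbf{y}=\textbf{Q}^{H}\textbf{x}$, which is again a unit vector, one gets $\textbf{x}^{H}\textbf{H}\textbf{x}=\sum_{j=1}^{n}\mu_{j}|y_{j}|^{2}$; since $\sum_{j}|y_{j}|^{2}=1$ with $|y_{j}|^{2}\geq 0$, the right-hand side is a convex combination of the $\mu_{j}$, so $\mu_{1}\leq\textbf{x}^{H}\textbf{H}\textbf{x}\leq\mu_{n}$. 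Combining with the first step yields $\lambda_{\min}(\textbf{H})\leq Re(\lambda)\leq\lambda_{\max}(\textbf{H})$, which is exactly the claimed inequality. One could equivalently quote the Courant--Fischer min--max theorem, but for the two extreme eigenvalues the elementary convex-combination argument above is self-contained; note that the same eigendecomposition of a Hermitian matrix already underlies Lemma \ref{lemma3.21}.

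I do not anticipate any genuine obstacle: the result is classical and uses nothing beyond the spectral decomposition of a Hermitian matrix. The only points meriting a line of care are (i) the identity $\overline{\lambda}=\textbf{x}^{H}\textbf{A}^{H}\textbf{x}$, which rests on the trivial remark that a $1\times 1$ matrix equals its own transpose, so $\overline{s}=s^{H}$ for a scalar $s$, together with $(\textbf{x}^{H})^{H}=\textbf{x}$ and $(\textbf{A}\textbf{x})^{H}=\textbf{x}^{H}\textbf{A}^{H}$; and (ii) the normalization $\textbf{x}^{H}\textbf{x}=1$, which is harmless because an eigenvector is nonzero and may always be rescaled. Everything else is routine bookkeeping.
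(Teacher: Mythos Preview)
Your proof is correct and is the standard Rayleigh--Ritz argument for this classical fact. The paper does not prove Lemma~\ref{lemma3.22} at all; it merely states it with a citation to \cite{Quarteroni:07}, so there is no approach to compare.
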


\begin{definition} (\cite{Chan:07})\label{definition3.22}
Let  $n \times n$ Toeplitz  matrix  $\textbf{T}_n$ be the form:
\begin{equation}
\textbf{T}_n=\left [ \begin{array}{ccccc}
                      t_0           &      t_{-1}             &      \cdots         &       t_{2-n}       &       t_{1-n}      \\
                      t_{1}         &      t_{0}              &      t_{-1}         &      \cdots         &       t_{2-n}        \\
                     \vdots         &      t_{1}              &      t_{0}          &      \ddots         &        \vdots            \\
                     t_{n-2}        &      \cdots             &      \ddots         &      \ddots         &        t_{-1}    \\
                     t_{n-1}        &       t_{n-2}           &      \cdots         &       t_1           &        t_0
 \end{array}
 \right ],
\end{equation}
i.e., $t_{i,j}=t_{i-j}$, and $\textbf{T}_n$ is constant along its diagonals.
Assume that the diagonals $\{t_k\}_{k=-n+1}^{n-1}$ are the Fourier coefficients of a function
$f$, i.e.,
\begin{equation}
t_k=\frac{1}{2\pi}\int_{-\pi}^{\pi}f(x)e^{-ikx}dx.
\end{equation}
Then the function $f$ is called the generating function of $\textbf{T}_n$.
\end{definition}

\begin{lemma} (\cite{Chan:91,Chan:07})\label{lemma3.23} (Grenander-Szeg\"{o} theorem)
Let $\textbf{T}_n$ be given by above matrix with a generating function $f$,
where $f$ is a $2\pi$-periodic continuous real-valued functions defined on $[-\pi,\pi]$.
Let $\lambda_{\min}(\textbf{T}_n)$ and $\lambda_{\max}(\textbf{T}_n)$ denote the smallest and largest
eigenvalues of $\textbf{T}_n$, respectively. Then we have
\begin{equation}
f_{\min} \leq \lambda_{\min}(\textbf{T}_n) \leq \lambda_{\max}(\textbf{T}_n) \leq f_{\max},
\end{equation}
where $f_{\min}$ and  $f_{\max}$  is the minimum and maximum values of $f(x)$, respectively.
Moreover, if $f_{\min}< f_{\max}$, then all eigenvalues of $\textbf{T}_n$ satisfy
\begin{equation}
f_{\min} < \lambda(\textbf{T}_n) < f_{\max},
\end{equation}
for all $n>0$. In particular, if  $f_{\min}>0$, then $\textbf{T}_n$ is positive definite.
\end{lemma}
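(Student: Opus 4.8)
The plan is to deduce all three assertions from the Rayleigh quotient (Courant--Fischer) characterization of the extreme eigenvalues of $\textbf{T}_n$ combined with the integral formula for its entries. First I would observe that, because $f$ is real-valued, its Fourier coefficients satisfy $t_{-k}=\overline{t_k}$, so $\textbf{T}_n$ is Hermitian; hence its eigenvalues are real and
\[
\lambda_{\min}(\textbf{T}_n)=\min_{\|\textbf{x}\|_2=1}\textbf{x}^{H}\textbf{T}_n\textbf{x},
\qquad
\lambda_{\max}(\textbf{T}_n)=\max_{\|\textbf{x}\|_2=1}\textbf{x}^{H}\textbf{T}_n\textbf{x}.
\]

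The key step is to rewrite the quadratic form. Given $\textbf{x}=(x_1,\dots,x_n)^{T}$, substituting $t_{j-k}=\frac{1}{2\pi}\int_{-\pi}^{\pi}f(\theta)e^{-i(j-k)\theta}\,d\theta$ and regrouping yields
\[
\textbf{x}^{H}\textbf{T}_n\textbf{x}=\frac{1}{2\pi}\int_{-\pi}^{\pi}f(\theta)\,\big|p_{\textbf{x}}(\theta)\big|^{2}\,d\theta,
\qquad p_{\textbf{x}}(\theta):=\sum_{k=1}^{n}x_k e^{ik\theta},
\]
and, by the orthonormality of $\{e^{ik\theta}\}$ on $[-\pi,\pi]$ (Parseval), $\frac{1}{2\pi}\int_{-\pi}^{\pi}|p_{\textbf{x}}(\theta)|^{2}\,d\theta=\sum_{k=1}^{n}|x_k|^{2}=1$. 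Hence $f_{\min}\le\textbf{x}^{H}\textbf{T}_n\textbf{x}\le f_{\max}$ for every unit vector, and the characterization above immediately gives $f_{\min}\le\lambda_{\min}(\textbf{T}_n)\le\lambda_{\max}(\textbf{T}_n)\le f_{\max}$. In particular, if $f_{\min}>0$ then $\lambda_{\min}(\textbf{T}_n)\ge f_{\min}>0$, so by Lemma \ref{lemma3.21} $\textbf{T}_n$ is positive definite (equivalently, the Rayleigh quotient is strictly positive on all nonzero vectors).

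For the strict inequalities under the hypothesis $f_{\min}<f_{\max}$, I would argue by contradiction. Suppose $\lambda_{\max}(\textbf{T}_n)=f_{\max}$ and let $\textbf{x}$ be a corresponding unit eigenvector; then $\frac{1}{2\pi}\int_{-\pi}^{\pi}\big(f_{\max}-f(\theta)\big)|p_{\textbf{x}}(\theta)|^{2}\,d\theta=0$ with a nonnegative continuous integrand, which forces $\big(f_{\max}-f(\theta)\big)|p_{\textbf{x}}(\theta)|^{2}\equiv 0$ on $[-\pi,\pi]$. But $p_{\textbf{x}}(\theta)=e^{i\theta}\sum_{k=1}^{n}x_k e^{i(k-1)\theta}$ is a nonzero polynomial in $e^{i\theta}$ (as $\textbf{x}\ne\textbf{0}$), hence vanishes at only finitely many $\theta$; therefore $f\equiv f_{\max}$ on $[-\pi,\pi]$ by continuity, contradicting $f_{\min}<f_{\max}$. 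Replacing $f_{\max}-f(\theta)$ by $f(\theta)-f_{\min}$ shows in the same way that $\lambda_{\min}(\textbf{T}_n)>f_{\min}$, whence $f_{\min}<\lambda(\textbf{T}_n)<f_{\max}$ for every eigenvalue.

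The only genuinely delicate point is this last step: upgrading the non-strict bounds to strict ones relies on the fact that a nonzero trigonometric polynomial of degree $\le n$ has only finitely many zeros on $[-\pi,\pi]$ (equivalently, $e^{i\theta}q(e^{i\theta})$ with $q$ a nonzero polynomial has finitely many roots on the unit circle). Everything else is a routine pairing of the Courant--Fischer principle with Parseval's identity. Since this is precisely the classical Grenander--Szeg\"{o} theorem, in the paper it suffices to cite \cite{Chan:91,Chan:07}; the short argument sketched here can be included for completeness.
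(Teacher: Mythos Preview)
Your argument is correct and is precisely the classical proof of the Grenander--Szeg\"{o} theorem. However, the paper does not actually prove this lemma: it is stated with citations \cite{Chan:91,Chan:07} and used as a black box in the proof of Theorem~\ref{the3.21}. So there is no ``paper's own proof'' to compare against; you have supplied a self-contained argument where the authors simply invoke the literature. Your final sentence already anticipates this, and the sketch you give is exactly what one would include for completeness.
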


\begin{theorem}\label{the3.21}
For the second order quasi-compact schemes corresponding to (\ref{equation2.115}) with $(p,q)=(-1,1)$ or $(0,1)$, and the third order quasi-compact schemes corresponding to (\ref{equation2.38})-(\ref{equation2.313}), the $\textbf{A}$ in (\ref{equation3.110}) is negative definite.
\end{theorem}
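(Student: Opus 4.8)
The plan is to identify $\textbf{A}$ as a principal truncation of a Toeplitz operator, reduce its negative definiteness to that of its symmetric part via Lemma~\ref{lemma3.21}, and then estimate the eigenvalues by the Grenander--Szeg\"o theorem (Lemma~\ref{lemma3.23}). The first step is to read off the symbol: by (\ref{equation3.15}) and the expansion $\sum_{k\ge0}g_k^{(\alpha)}z^k=(1-z)^\alpha$, which converges absolutely on $|z|\le1$, the generating series of the weights factors as $\sum_{k\ge0}w_k^{(\alpha)}z^k=(d_1+d_0z+d_{-1}z^2)(1-z)^\alpha$. Since the $(i,j)$ entry of $\textbf{A}$ in (\ref{equation3.110}) equals $w_{i-j+1}^{(\alpha)}$, the matrix $\textbf{A}$ is the $(N-1)\times(N-1)$ leading principal submatrix of the infinite Toeplitz operator whose generating function is
\[
\phi(\theta)=e^{-i\theta}(d_1+d_0e^{i\theta}+d_{-1}e^{2i\theta})(1-e^{i\theta})^{\alpha},
\]
a $2\pi$-periodic function that is continuous because $(1-e^{i\theta})^{\alpha}$ is continuous for $\alpha>0$.

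Next I would pass to the symmetric part. By Lemma~\ref{lemma3.21}, $\textbf{A}$ is negative definite iff $\textbf{B}:=\frac{1}{2}(\textbf{A}+\textbf{A}^{T})$ is, iff all eigenvalues of $\textbf{B}$ are negative; and since the $w_k^{(\alpha)}$ are real, $\textbf{B}$ is itself a symmetric Toeplitz truncation with real, continuous, $2\pi$-periodic generating function $\frac{1}{2}(\phi(\theta)+\phi(-\theta))=\mathrm{Re}\,\phi(\theta)$. By Lemma~\ref{lemma3.23} it therefore suffices to prove the single inequality $\mathrm{Re}\,\phi(\theta)\le0$ for all $\theta\in[-\pi,\pi]$: indeed $\mathrm{Re}\,\phi(0)=0$ automatically (the factor $(1-e^{i\theta})^\alpha$ vanishes there) and $\mathrm{Re}\,\phi\not\equiv0$ (e.g.\ $\mathrm{Re}\,\phi(\pi)<0$), so $(\mathrm{Re}\,\phi)_{\min}<(\mathrm{Re}\,\phi)_{\max}=0$, whence Lemma~\ref{lemma3.23} forces every eigenvalue of $\textbf{B}$ into the interval $\bigl((\mathrm{Re}\,\phi)_{\min},0\bigr)$.

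It then remains to verify $\mathrm{Re}\,\phi(\theta)\le0$ for each scheme on the list. By evenness it is enough to treat $\theta\in(0,\pi]$, where $1-e^{i\theta}=2\sin(\theta/2)e^{i(\theta-\pi)/2}$ yields
\[
\mathrm{Re}\,\phi(\theta)=(2\sin(\theta/2))^{\alpha}\,\mathrm{Re}\left\{(d_1e^{-i\theta}+d_0+d_{-1}e^{i\theta})e^{i\alpha(\theta-\pi)/2}\right\},
\]
and since $(2\sin(\theta/2))^{\alpha}>0$ on $(0,\pi]$, the problem reduces to a trigonometric inequality for the bracketed factor. I would then substitute the explicit weights (each determined by $\beta=1-\alpha/2$): for instance $(d_{-1},d_0,d_1)=(0,\,1-\frac{\alpha}{2},\,\frac{\alpha}{2})$ when $(p,q)=(0,1)$, and $(\frac{2-\alpha}{4},\,0,\,\frac{2+\alpha}{4})$ when $(p,q)=(-1,1)$; for the third order schemes (\ref{equation2.38})--(\ref{equation2.313}) the weights are read off the right-hand sides, and in two cases the polynomial factor collapses so that $\textbf{A}$ becomes a scalar multiple of the Toeplitz truncation with symbol $e^{-i\theta}(1-e^{i\theta})^{\gamma}$, $\gamma\in\{\alpha,\alpha+1\}$. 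As a model case, for $(p,q)=(0,1)$ set $a=\frac{\alpha}{2}$, $b=1-\frac{\alpha}{2}$ (so $a+b=1$, $a>\frac12>b$, $a-b=\alpha-1\in(0,1)$) and $\mu=\theta-\pi\in(-\pi,0)$; since $e^{-i\theta}e^{i\alpha(\theta-\pi)/2}=e^{i(a\mu-\mu-\pi)}=-e^{-ib\mu}$, the bracket equals $b\cos(a\mu)-a\cos(b\mu)=-\psi(\mu)$ with $\psi(\mu):=a\cos(b\mu)-b\cos(a\mu)$. One checks $\psi(0)=a-b>0$ and, by a product-to-sum identity, $\psi'(\mu)=ab(\sin(a\mu)-\sin(b\mu))=2ab\cos(\mu/2)\sin\bigl(\frac{(a-b)\mu}{2}\bigr)$, which is strictly negative on $(-\pi,0)$; hence $\psi>a-b>0$ there, so the bracket is negative and $\mathrm{Re}\,\phi(\theta)<0$ on $(0,\pi)$. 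The same recipe --- isolate $(2\sin(\theta/2))^{\alpha}$, rewrite the bracket as an amplitude times a difference of cosines, and control its sign from the endpoint values together with a single-sign derivative --- handles the remaining second order case and the third order schemes, using $1<\alpha<2$ to confine the active phase to a half-period on which the cosine keeps its sign; the case $\alpha=2$ is classical, the schemes reducing to the centered second difference after the normalization of Remark~\ref{remark2.31}.

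The two reductions above are routine; the step I expect to be the main obstacle is the last one --- establishing $\mathrm{Re}\,\phi(\theta)\le0$ on the \emph{whole} open interval $(0,\pi)$ rather than merely at its endpoints. The sign is genuinely borderline: near $\theta=0$ it is governed by $\cos(\alpha\pi/2)<0$, but near $\theta=\pi$ by quite different terms, so no single coarse estimate will do. One must factor $\mathrm{Re}\,\phi$ into manifestly-signed pieces and exploit $1<\alpha<2$ with care, and for the third order schemes additionally track the sign of the scalar prefactor (such as $\frac{\beta(1-\beta)(6\beta-1)}{12}$), which is what pins down the admissible range of $\alpha$.
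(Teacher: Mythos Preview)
Your proposal is correct and follows essentially the same route as the paper: pass to the symmetric part via Lemma~\ref{lemma3.21}, identify its generating function, factor out $(2\sin\frac{\theta}{2})^{\alpha}$, and verify case by case that the residual trigonometric factor is nonpositive so that Lemma~\ref{lemma3.23} forces all eigenvalues to be negative. The only cosmetic difference is in the model case $(p,q)=(0,1)$: the paper proves the sign by differentiating $g$ with respect to the parameter $\beta$ and using $g(\beta)\le g(\tfrac12)=0$, whereas you differentiate in the angular variable $\mu$; both arguments are valid and equally short.
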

\begin{proof}
Firstly, we consider the symmetric part of matrix $\textbf{A}$, denoted as $\textbf{H}=\frac{\textbf{A}+\textbf{A}^T}{2}$.
The generating functions of $\textbf{A}$ and $\textbf{A}^T$ are
\begin{equation}
f_A(x)=\sum_{k=0}^{\infty}w_{k}^{(\alpha)}e^{i(k-1)x},
~~~f_{A^T}(x)=\sum_{k=0}^{\infty}w_{k}^{(\alpha)}e^{-i(k-1)x},
\end{equation}
respectively. Then $f(\alpha;x)=\frac{f_A(x)+f_{A^T}(x)}{2}$ is the generating function of $\textbf{H}$,
and $f(\alpha;x)$ is a periodic continuous real-valued function on $[-\pi,\pi]$, since $f_A(x)$ and
$f_{A^T}(x)$ are mutually conjugated.

By (\ref{equation3.15}),
\begin{eqnarray}\label{equation3.21}
&&f(\alpha;x)
\nonumber\\
&=&\frac{1}{2}\big(\sum_{k=0}^{\infty}w_{k}^{(\alpha)}e^{i(k-1)x}
+\sum_{k=0}^{\infty}w_{k}^{(\alpha)}e^{-i(k-1)x}\big)
\nonumber\\
&=&\frac{1}{2}\big(d_{-1}e^{ix}\sum_{k=0}^{\infty}g_{k}^{(\alpha)}e^{ikx}+
d_{0}\sum_{k=0}^{\infty}g_{k}^{(\alpha)}e^{ikx}
+d_{1}e^{-ix}\sum_{k=0}^{\infty}g_{k}^{(\alpha)}e^{ikx}+
\nonumber\\
&&~~~~d_{-1}e^{-ix}\sum_{k=0}^{\infty}g_{k}^{(\alpha)}e^{-ikx}
+d_{0}\sum_{k=0}^{\infty}g_{k}^{(\alpha)}e^{-ikx}
+d_{1}e^{ix}\sum_{k=0}^{\infty}g_{k}^{(\alpha)}e^{-ikx}\big)
\nonumber\\
&=&\frac{1}{2}\bigg[d_{-1}\big(e^{ix}(1-e^{ix})^{\alpha}+e^{-ix}(1-e^{-ix})^{\alpha}\big)
\nonumber\\
&&~~~+d_{0}\big((1-e^{ix})^{\alpha}+(1-e^{-ix})^{\alpha}\big)
\nonumber\\
&&~~~+d_1\big(e^{-ix}(1-e^{ix})^{\alpha}+e^{ix}(1-e^{-ix})^{\alpha}\big)\bigg].
\end{eqnarray}
Since $f(\alpha;x)$ is a real-valued and even function, we just
consider its principal value on $[0,\pi]$.
From  the formulae
\begin{eqnarray*}
e^{i\theta}-e^{i\eta}&=&2i\sin(\frac{\theta-\eta}{2})e^{\frac{i(\theta+\eta)}{2}},\\
(1-e^{\pm ix})^{\alpha}&=&\left(2\sin\frac{x}{2}\right)^{\alpha}\!e^{\pm i\alpha(\frac{x}{2}-\frac{\pi}{2})},
\end{eqnarray*}
there exists
\begin{eqnarray}\label{equation3.22}
&&f(\alpha;x)
\nonumber\\
&=&(2\sin\frac{x}{2})^{\alpha}\bigg[d_{-1}\cos\big(\frac{\alpha}{2}(x-\pi)+x\big)
+d_0\cos\big(\frac{\alpha}{2}(x-\pi)\big)
\nonumber\\
&&~~~~~~~~~~~~~~~+d_1\cos\big(\frac{\alpha}{2}(x-\pi)-x\big)\bigg]
\nonumber\\
&=&(2\sin\frac{x}{2})^{\alpha}\bigg[-d_{-1}\cos\big((2-\beta)(\pi-x)\big)+
d_0\cos\big((1-\beta)(\pi-x)\big)
\nonumber\\
&&~~~~~~~~~~~~~~~-d_1\cos\big(\beta(\pi-x)\big)\bigg].
\end{eqnarray}

Since $1<\alpha\leq2$, $x\in[0,\pi]$ and $\beta=1-\frac{\alpha}{2}$, it is clear that
\begin{equation}
0\leq\beta<\frac{1}{2},~~~\frac{1}{2}<1-\beta\leq 1,
\end{equation}
and
\begin{equation}\label{equation3.23}
\sin\big(\frac{x}{2}\big)\geq 0,~~~\cos\beta\xi>0,~~~\sin\xi\geq 0,
~~~\sin\big((1-\beta)\xi\big)\geq 0,~~~\sin\beta\xi\geq 0,
\end{equation}
where $\xi=\pi-x\in[0,\pi]$.

Denoting
\begin{equation}
g(\beta):=-d_{-1}\cos\big((2-\beta)\xi\big)+d_0\cos\big((1-\beta)\xi\big)-d_1\cos\beta\xi,
\end{equation}
then
\begin{enumerate}[(i)]

\item
for the second order scheme corresponding to (\ref{equation2.111}) and (\ref{equation2.120}), where $d_{-1}=d_0=0,~d_1=1$,
\begin{equation}\label{equation3.24}
g(\beta)=-\cos\beta\xi< 0.
\end{equation}

\item
for the scheme corresponding to (\ref{equation2.114}) with $(p,q)=(-1,1)$, where $d_{-1}=\frac{\beta}{2},~d_0=0,~d_1=\frac{2-\beta}{2}$,
if $x\in[\frac{\pi}{2},\pi]$, then $\xi\in[0,\frac{\pi}{2}],~\sin2\xi>0$,
\begin{eqnarray}\label{equation3.25}
&&g(\beta)
\nonumber\\
&=&-\frac{\beta}{2}\cos\big((2-\beta)\xi\big)-(1-\frac{\beta}{2})\cos\beta\xi
\nonumber\\
&=&-\frac{\beta}{2}\big(\cos2\xi\cos\beta\xi+\sin2\xi\sin\beta\xi\big)-(1-\frac{\beta}{2})\cos\beta\xi
\nonumber\\
&=&-\cos\beta\xi\big(1-\frac{\beta}{2}(1-\cos2\xi)\big)-\frac{\beta}{2}\sin2\xi\sin\beta\xi<0;
\end{eqnarray}

if $x\in[0,\frac{\pi}{2}]$, then $\xi\in[\frac{\pi}{2},\pi],~\cos\big((\frac{1}{2}-\beta)\xi\big)\geq0,~
\sin\big((\frac{1}{2}-\beta)\xi\big)\geq 0$,
\begin{eqnarray}\label{equation3.25*1}
&&g(\beta)
\nonumber\\
&=&-\frac{\beta}{2}\cos\big((\frac{1}{2}-\beta)\xi+\frac{3}{2}\xi\big)
-(1-\frac{\beta}{2})\cos\big((\frac{1}{2}-\beta)\xi-\frac{\xi}{2}\big)
\nonumber\\
&=&-\frac{\beta}{2}\big[\cos\big((\frac{1}{2}-\beta)\xi\big)\cos\frac{3}{2}\xi
-\sin\big((\frac{1}{2}-\beta)\xi\big)\sin\frac{3}{2}\xi\big]
\nonumber\\
&&-(1-\frac{\beta}{2})\big[\cos\big((\frac{1}{2}-\beta)\xi\big)\cos\frac{1}{2}\xi
+\sin\big((\frac{1}{2}-\beta)\xi\big)\sin\frac{1}{2}\xi\big]
\nonumber\\
&=&-\cos\big((\frac{1}{2}-\beta)\xi\big)\big[1+\frac{\beta}{2}\big(\cos\frac{3}{2}\xi-\cos\frac{1}{2}\xi\big)\big]
\nonumber\\
&&-\sin\big((\frac{1}{2}-\beta)\xi\big)\big[1-\frac{\beta}{2}\big(\sin\frac{3}{2}\xi+\sin\frac{1}{2}\xi\big)\big]
\nonumber\\
&\leq& 0.
\end{eqnarray}
\item
for the scheme corresponding to (\ref{equation2.114}) with $(p,q)=(0,1)$, where $d_{-1}=0,~d_0=\beta,~d_1=1-\beta$,
\begin{equation}\label{equation3.25*2}
g(\beta)=\beta\cos\big((1-\beta)\xi\big)-(1-\beta)\cos\beta\xi,
\end{equation}
and because of

\begin{eqnarray}\label{equation3.26}
&&g'(\beta)
\nonumber\\
&=&\cos\big((1-\beta)\xi\big)+\beta\xi\sin\big((1-\beta)\xi\big)
+\cos\beta\xi+(1-\beta)\xi\sin\beta\xi
\nonumber\\
&=&\cos\xi\cos\beta\xi+\sin\xi\sin\beta\xi
\nonumber\\
&&+\beta\xi\sin\big((1-\beta)\xi\big)
+\cos\beta\xi+(1-\beta)\xi\sin\beta\xi
\nonumber\\
&=&\big(1+\cos\xi\big)\cos\beta\xi+\sin\xi\sin\beta\xi
\nonumber\\
&&+\beta\xi\sin\big((1-\beta)\xi\big)+(1-\beta)\xi\sin\beta\xi
\geq 0,
\end{eqnarray}
$g(\beta)$ increases with respect to $\beta$. Therefore, $g(\beta)\leq g(\frac{1}{2})=0$.

\item
for the scheme corresponding to (\ref{equation2.311}), after ignoring the common factor $\frac{\beta(1-\beta)}{12}$, $d_{-1}=0,~d_0=(6\beta-1),~d_1=(13-6\beta)$,
\begin{eqnarray}\label{equation3.27}
&&g(\beta)
\nonumber\\
&=&(6\beta-1)\cos\big((1-\beta)\xi\big)-(13-6\beta)\cos\beta\xi
\nonumber\\
&=&6\beta\cos\big((1-\beta)\xi\big)-6(1-\beta)\cos\beta\xi
-\cos\big((1-\beta)\xi\big)-7\cos\beta\xi
\nonumber\\
&=&6\beta g_1(\beta)-g_2(\beta),
\end{eqnarray}
where
\begin{equation}\label{equation3.28}
g_1(\beta)=\beta\cos\big((1-\beta)\xi\big)-(1-\beta)\cos\beta\xi,
\end{equation}
\begin{eqnarray}\label{equation3.29}
&&g_2(\beta)
\nonumber\\
&=&\cos\big((1-\beta)\xi\big)+7\cos\beta\xi
\nonumber\\
&=&(7+\cos\xi)\cos\beta\xi+\sin\xi\sin\beta\xi>0,
\end{eqnarray}
from (\ref{equation3.23}), (\ref{equation3.25*2}) and (\ref{equation3.26}), there exists $g(\beta)< 0$.

\item
for the scheme corresponding to (\ref{equation2.312}), after ignoring the common factor $\frac{\beta(1-\beta)(6\beta-1)}{12}$,
$d_{-1}=0,~d_0=1,~d_1=-1$,
\begin{eqnarray}\label{equation3.210*1}
&&g(\beta)
\nonumber\\
&=&\cos\big((1-\beta)\xi\big)+\cos\beta\xi
\nonumber\\
&=&\cos\xi\cos\beta\xi+\sin\xi\sin\beta\xi+\cos\beta\xi
\nonumber\\
&=&\cos\beta\xi(1+\cos\xi)+\sin\xi\sin\beta\xi>0.
\end{eqnarray}
%\item
%for the other third order schemes listed in this paper, by ignoring the common factors in them, we can prove
% the desired results in similar ways as above.
\end{enumerate}

From Lemmas \ref{lemma3.22} and \ref{lemma3.23}, we know that $Re(\lambda)<0$ for the second order schemes discussed in this proof and the third quasi-compact schemes corresponding to (\ref{equation2.38})-(\ref{equation2.313}). And the desired result follows from Lemma \ref{lemma3.21}. The proof is completed.
\end{proof}

%\begin{remark}\label{remark3.21}
%Actually, for the schemes corresponding to the second order approximations,
%the effectiveness for space fractional derivatives has been analyzed in \cite{Tian:12}.
%We still give proofs for them, because the approaches here are a little different from that in
%\cite{Tian:12}.
%\end{remark}
%
%\begin{remark}\label{remark3.22}
%In spite that there is an unified form (\ref{equation3.18}) for the ``practical" approximations with any order accuracy,
%and their coefficients have relationships as (\ref{equation2.34})-(\ref{equation2.37}), it is hardly
%to analysis the properties of those matrices in an uniform way in the context of without knowing these symbols' specific value.
%\end{remark}

\subsection{Stability and convergent analysis}\label{subsec:3.3}

As for the stability of the scheme (\ref{equation3.18}), we have the following results.
\begin{theorem}\label{the3.31}
The CN-quasi-compact scheme (\ref{equation3.18}) is unconditionally stable if $\textbf{T}$ is symmetric and positive definite and
$\textbf{A}$ is negative definite.
\end{theorem}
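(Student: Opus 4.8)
The plan is to reduce the scheme to a Cayley‑transform iteration in the energy norm induced by $\textbf{T}$, and then invoke the standard contractivity of that transform. First I would put $\textbf{B}=\frac{\tau}{2h^{\alpha}}(K_{1}\textbf{A}+K_{2}\textbf{A}^{T})$ and note that, since $x^{T}\textbf{A}^{T}x=x^{T}\textbf{A}x$ for every real vector $x$, one has $x^{T}\textbf{B}x=\frac{\tau}{2h^{\alpha}}(K_{1}+K_{2})\,x^{T}\textbf{A}x$. Because the diffusion coefficients in (\ref{equation1.1}) satisfy $K_{1},K_{2}\ge 0$ with $K_{1}^{2}+K_{2}^{2}\neq 0$, and $\textbf{A}$ is negative definite by hypothesis, the symmetric part $\frac{1}{2}(\textbf{B}+\textbf{B}^{T})$ is negative definite. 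Consequently the symmetric part of $\textbf{T}-\textbf{B}$ equals $\textbf{T}-\frac{1}{2}(\textbf{B}+\textbf{B}^{T})$, a sum of two positive definite matrices, hence positive definite; by (the argument behind) Lemma \ref{lemma3.22} this forces every eigenvalue of $\textbf{T}-\textbf{B}$ to have positive real part, so $\textbf{T}-\textbf{B}$ is invertible and (\ref{equation3.18}) can be rewritten as $\textbf{U}^{n+1}=(\textbf{T}-\textbf{B})^{-1}(\textbf{T}+\textbf{B})\textbf{U}^{n}+(\textbf{T}-\textbf{B})^{-1}(\tau\textbf{F}^{n}+\textbf{H}^{n})$.

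Next I would symmetrize with $\textbf{T}^{1/2}$, which exists and is symmetric positive definite since $\textbf{T}$ is. Setting $\textbf{V}^{n}=\textbf{T}^{1/2}\textbf{U}^{n}$ and $\textbf{C}=\textbf{T}^{-1/2}\textbf{B}\textbf{T}^{-1/2}$, the recursion becomes $\textbf{V}^{n+1}=(\textbf{I}-\textbf{C})^{-1}(\textbf{I}+\textbf{C})\textbf{V}^{n}+(\textbf{I}-\textbf{C})^{-1}\textbf{T}^{-1/2}(\tau\textbf{F}^{n}+\textbf{H}^{n})$, and $\textbf{C}$ inherits a negative definite symmetric part because $x^{T}\textbf{C}x=(\textbf{T}^{-1/2}x)^{T}\textbf{B}(\textbf{T}^{-1/2}x)<0$ for $x\neq\textbf{0}$. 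The key estimate is then the elementary identity $\|(\textbf{I}-\textbf{C})w\|_{2}^{2}-\|(\textbf{I}+\textbf{C})w\|_{2}^{2}=-2\,w^{T}(\textbf{C}+\textbf{C}^{T})w\ge 0$ valid for all $w$, which gives $\|(\textbf{I}+\textbf{C})w\|_{2}\le\|(\textbf{I}-\textbf{C})w\|_{2}$; since $\textbf{I}-\textbf{C}$ and $\textbf{I}+\textbf{C}$ are polynomials in $\textbf{C}$ they commute, so $(\textbf{I}-\textbf{C})^{-1}(\textbf{I}+\textbf{C})=(\textbf{I}+\textbf{C})(\textbf{I}-\textbf{C})^{-1}$, and substituting $w=(\textbf{I}-\textbf{C})^{-1}v$ yields $\|(\textbf{I}-\textbf{C})^{-1}(\textbf{I}+\textbf{C})\|_{2}\le 1$. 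In the same way, $w^{T}\,\tfrac{(\textbf{I}-\textbf{C})+(\textbf{I}-\textbf{C})^{T}}{2}\,w\ge w^{T}w$ gives $\|(\textbf{I}-\textbf{C})^{-1}\|_{2}\le 1$.

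Finally I would take norms in the transformed recursion: $\|\textbf{V}^{n+1}\|_{2}\le\|\textbf{V}^{n}\|_{2}+\|\textbf{T}^{-1/2}\|_{2}(\tau\|\textbf{F}^{n}\|_{2}+\|\textbf{H}^{n}\|_{2})$, and iterating over $n$ gives $\|\textbf{V}^{n}\|_{2}\le\|\textbf{V}^{0}\|_{2}+\|\textbf{T}^{-1/2}\|_{2}\sum_{j=0}^{n-1}(\tau\|\textbf{F}^{j}\|_{2}+\|\textbf{H}^{j}\|_{2})$. Rephrasing in the weighted norm $\|\textbf{U}^{n}\|_{\textbf{T}}:=\|\textbf{T}^{1/2}\textbf{U}^{n}\|_{2}$ shows that the map from the data $(\textbf{U}^{0},\{\textbf{F}^{j}\},\{\textbf{H}^{j}\})$ to $\{\textbf{U}^{n}\}$ is bounded uniformly in $\tau$ and $h$; in particular, a perturbation of the initial datum alone propagates through the homogeneous recursion $\textbf{e}^{n+1}=(\textbf{T}-\textbf{B})^{-1}(\textbf{T}+\textbf{B})\textbf{e}^{n}$ with $\|\textbf{e}^{n}\|_{\textbf{T}}\le\|\textbf{e}^{0}\|_{\textbf{T}}$, which is exactly unconditional stability. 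The one genuinely substantive point is the contractivity bound $\|(\textbf{I}-\textbf{C})^{-1}(\textbf{I}+\textbf{C})\|_{2}\le 1$ (a Cayley/Kellogg‑type lemma) combined with the observation that conjugation by $\textbf{T}^{-1/2}$ preserves negative definiteness of the symmetric part; the invertibility of $\textbf{T}-\textbf{B}$, the reduction of $K_{1}\textbf{A}+K_{2}\textbf{A}^{T}$ to $(K_{1}+K_{2})\textbf{A}$ in quadratic forms, and the closing recursion are all routine.
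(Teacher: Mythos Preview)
Your argument is correct, but it differs from the paper's. The paper sets $\widetilde{\textbf{A}}=\frac{\tau}{2h^{\alpha}}(K_1\textbf{A}+K_2\textbf{A}^{T})$, forms $\textbf{C}=\textbf{T}^{-1}\widetilde{\textbf{A}}$ by a \emph{similarity} with $\textbf{T}^{-1}$, and then argues via a Rayleigh--quotient identity that every eigenvalue of $\textbf{C}$ has negative real part, so the M\"{o}bius map $\lambda\mapsto(1+\lambda)/(1-\lambda)$ sends each eigenvalue of $\textbf{C}$ into the open unit disk; this yields only that the spectral radius of the iteration matrix is strictly less than one. You instead form $\textbf{C}=\textbf{T}^{-1/2}\textbf{B}\textbf{T}^{-1/2}$ by a \emph{congruence}, which preserves negative definiteness of the symmetric part, and then invoke the Kellogg/Cayley inequality $\|(\textbf{I}-\textbf{C})^{-1}(\textbf{I}+\textbf{C})\|_{2}\le 1$. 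The paper's route is shorter and purely spectral; yours is slightly longer but delivers a genuine operator--norm bound in the energy norm $\|\cdot\|_{\textbf{T}}$, which is strictly stronger (spectral radius $<1$ alone does not give uniform--in--$h$ norm control for nonnormal iteration matrices) and feeds directly into the error estimate of Theorem~\ref{the3.32} without further work.
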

\begin{proof}
Denoting $\widetilde{\textbf{A}}:=\frac{\tau}{2h^{\alpha}}(K_1 \textbf{A}+K_2 \textbf{A}^{T})$, we prove that the magnitudes of the eigenvalues of the iterative matrix $(\textbf{T}-\widetilde{\textbf{A}})^{-1}(\textbf{T}+\widetilde{\textbf{A}})$ are less than one.

Firstly, we show that the real parts of all the eigenvalues of $\textbf{C}:=(\textbf{T})^{-1}\widetilde{\textbf{A}}$ are negative. In fact, if $\lambda$ is an eigenvalue of $\textbf{C}$, and $\textbf{x}$ is the
corresponding eigenvector, then $(\textbf{T})^{-1}\widetilde{\textbf{A}}\textbf{x}=\lambda\textbf{x}
~\Rightarrow~ \widetilde{\textbf{A}}\textbf{x}=\lambda\textbf{T}\textbf{x} ~\Rightarrow~
\textbf{x}^T\widetilde{\textbf{A}}\textbf{x}=\lambda\textbf{x}^T\textbf{T}\textbf{x} ~\Rightarrow~
\lambda=\textbf{x}^T\widetilde{\textbf{A}}\textbf{x}/\textbf{x}^T\textbf{T}\textbf{x}$.
Since $\textbf{T}$ is symmetric and positive definite, $\textbf{x}^T\textbf{T}\textbf{x}>0$; and because of
$\widetilde{\textbf{A}}$ being negative definite, $Re(\textbf{x}^T\widetilde{\textbf{A}}\textbf{x})<0$. Therefore, $Re(\lambda)<0$.

Secondly, since $\textbf{B}:=(\textbf{T}-\widetilde{\textbf{A}})^{-1}(\textbf{T}+\widetilde{\textbf{A}})
=(\textbf{I}-\textbf{C})^{-1}(\textbf{I}+\textbf{C})$, if $\lambda$ is an eigenvalue of $\textbf{C}$, then
$(1+\lambda)/(1-\lambda)$ is an eigenvalue of $\textbf{B}$. Therefore $|(1+\lambda)/(1-\lambda)|<1$, i.e.,
the CN-quasi-compact scheme is unconditionally stable. The proof is completed.
\end{proof}

The convergence results are listed in the following lemma and theorem. Since the error estimates here in Theorem \ref{the3.32}
can be obtained by Lemma \ref{lemma3.32} and similar analysis as in \cite{Zhou:13}, the proof is omitted.

\begin{lemma}\cite{Yu:04}\label{lemma3.32}
The eigenvalues of $\textbf{T}$ in (\ref{equation3.19}) are given as
\begin{equation}
\lambda_{j}=c_0+2\sqrt{c_{-1}c_{1}}\cos(j\pi/N),~~~1\leq j \leq N-1.
\end{equation}
Moreover, if $\textbf{T}$ is symmetric and strictly positive definite, its eigenvalues are real and satisfy
\begin{equation}
\lambda_{j}\geq c_0-2|c_{1}|>0~~~{\rm or}~~~\lambda_{j}\geq c_0-2|c_{-1}|>0,~~~1\leq j \leq N-1.
\end{equation}
\end{lemma}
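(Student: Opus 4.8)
The plan is to obtain the eigenvalues of $\textbf{T}$ by the standard route for tridiagonal Toeplitz matrices, i.e. by solving the associated second order linear recurrence with Dirichlet-type endpoint conditions. Recall that $\textbf{T}$ in (\ref{equation3.19}) is the $(N-1)\times(N-1)$ matrix with $c_0$ on the main diagonal, $c_1$ on the superdiagonal and $c_{-1}$ on the subdiagonal. For an eigenpair $(\lambda,\textbf{v})$ with $\textbf{v}=(v_1,\dots,v_{N-1})^T$, the identity $\textbf{T}\textbf{v}=\lambda\textbf{v}$ is exactly the recurrence $c_{-1}v_{k-1}+c_0 v_k+c_1 v_{k+1}=\lambda v_k$ for $1\le k\le N-1$ subject to the convention $v_0=v_N=0$. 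So everything reduces to determining the values of $\lambda$ for which this recurrence admits a nontrivial solution satisfying both endpoint conditions.

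The main computation is as follows. Assume first $c_{-1}c_1\neq 0$. The characteristic equation $c_1 t^2+(c_0-\lambda)t+c_{-1}=0$ has roots $t_1,t_2$ with $t_1 t_2=c_{-1}/c_1$ and $t_1+t_2=(\lambda-c_0)/c_1$, both nonzero; the repeated-root case is excluded a posteriori since it would force $\textbf{v}=\textbf{0}$. Hence $v_k=A t_1^{k}+B t_2^{k}$; imposing $v_0=0$ gives $B=-A$, and then $v_N=0$ forces $(t_1/t_2)^N=1$, so $t_1/t_2=e^{2\pi i j/N}$ for some $j\in\{1,\dots,N-1\}$ (the indices $j=0,N$ give $t_1=t_2$ and are discarded). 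Writing $t_{1,2}=\rho e^{\pm i\pi j/N}$ with $\rho^2=t_1 t_2=c_{-1}/c_1$, one gets $t_1+t_2=2\rho\cos(j\pi/N)$, and therefore $\lambda_j=c_0+c_1(t_1+t_2)=c_0+2c_1\sqrt{c_{-1}/c_1}\,\cos(j\pi/N)=c_0+2\sqrt{c_{-1}c_1}\,\cos(j\pi/N)$. Since $\cos$ is strictly monotone on $(0,\pi)$, these $N-1$ numbers are pairwise distinct and so exhaust the spectrum of the $(N-1)\times(N-1)$ matrix $\textbf{T}$. (An equivalent, slightly slicker presentation in the case $c_{-1}c_1>0$ is to conjugate $\textbf{T}$ by $\mathrm{diag}(1,s,\dots,s^{N-2})$ with $s=\sqrt{c_{-1}/c_1}$, which turns $\textbf{T}$ into the \emph{symmetric} tridiagonal Toeplitz matrix with diagonal $c_0$ and off-diagonals $\sqrt{c_{-1}c_1}$, whose eigenvalues $c_0+2\sqrt{c_{-1}c_1}\cos(j\pi/N)$ are classical.) When $c_1=0$ or $c_{-1}=0$ the matrix $\textbf{T}$ is triangular, all eigenvalues equal $c_0$, and the formula still holds with $\sqrt{c_{-1}c_1}=0$.

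For the ``moreover'' part, symmetry of $\textbf{T}$ means $c_{-1}=c_1$, so $\sqrt{c_{-1}c_1}=|c_1|$ and $\lambda_j=c_0+2c_1\cos(j\pi/N)$ is real — which is in any case automatic since a real symmetric matrix has real spectrum. Because $1\le j\le N-1$ gives $-1<\cos(j\pi/N)<1$, we obtain $\lambda_j> c_0-2|c_1|=c_0-2|c_{-1}|$. Finally, strict positive definiteness of $\textbf{T}$ (understood uniformly in $N$, equivalently positivity of the generating symbol $c_0+2c_1\cos x$, which is the notion actually used for the stability argument) forces $c_0-2|c_1|>0$, and combining the two gives $\lambda_j\ge c_0-2|c_1|>0$.

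The calculations above are routine; the points that deserve care are the bookkeeping of the degenerate cases — $c_{-1}c_1=0$, where $\textbf{T}$ is triangular, and $c_{-1}c_1<0$, where $\sqrt{c_{-1}c_1}$ must be taken as a fixed purely imaginary value and the eigenvalues are genuinely non-real — and making precise the last step, namely how strict positive definiteness (uniform in $N$, i.e. at the level of the symbol) yields the \emph{strict} bound $c_0-2|c_1|>0$ rather than only $c_0-2|c_1|\ge 0$.
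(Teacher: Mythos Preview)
The paper does not prove this lemma at all: it is simply quoted from the textbook \cite{Yu:04} and used as a black box in the convergence argument, so there is no ``paper's own proof'' to compare against. Your argument is the standard one for tridiagonal Toeplitz matrices (recurrence / characteristic roots, or equivalently the diagonal similarity to a symmetric tridiagonal matrix), and it is correct.

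Your treatment of the ``moreover'' clause is also the right way to read the statement. As written, positive definiteness of the fixed $(N-1)\times(N-1)$ matrix $\textbf{T}$ alone would only give $c_0+2c_1\cos(j\pi/N)>0$ for $1\le j\le N-1$, hence merely $c_0-2|c_1|\ge 0$ in the limit; the strict inequality $c_0-2|c_1|>0$ is what the paper actually needs (and checks scheme by scheme, see Remark~\ref{remark3.31}), so interpreting ``strictly positive definite'' at the level of the generating symbol --- i.e.\ uniformly in $N$ --- is exactly the intended meaning. It is worth saying this explicitly in your write-up, as you do.
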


\begin{theorem}\label{the3.32}
Let $u_{i}^n$ be the exact solution of problem (\ref{equation1.1}), and $U_{i}^n$ be the solution of the $m$-th order CN-quasi-compact scheme (\ref{equation3.18}) at grid point $(x_i,t_n)$. If $\textbf{T}$ is symmetric and strictly positive definite and $\textbf{A}$ is negative definite, the estimate
\begin{equation}
\|u^n-U^n\|\leq C(\tau^2+h^m),~~~1\leq n \leq M,
\end{equation}
holds, where $\|\cdot\|$ means the discrete $L^2$ norm.
\end{theorem}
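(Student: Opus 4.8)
The plan is to run a discrete energy argument on the error equation, exploiting the two structural hypotheses: symmetry and positive definiteness of $\textbf{T}$, which produce an energy norm equivalent to the discrete $L^2$ norm, and negative definiteness of $\textbf{A}$ (hence of $\widetilde{\textbf{A}}:=\frac{\tau}{2h^{\alpha}}(K_1\textbf{A}+K_2\textbf{A}^{T})$, using $K_1,K_2\ge0$ with $K_1^2+K_2^2\ne0$), which lets the ``diffusion'' part be discarded with a favorable sign. First I would set $\textbf{e}^n=(u_1^n-U_1^n,\dots,u_{N-1}^n-U_{N-1}^n)^T$ and subtract the CN-quasi-compact scheme (\ref{equation3.16}) from the identity (\ref{equation3.14}) satisfied by the exact solution. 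Since $u$ and $U$ share the same initial and boundary data, the source vector $\textbf{F}^n$ and the boundary vector $\textbf{H}^n$ cancel, leaving
\[
(\textbf{T}-\widetilde{\textbf{A}})\,\textbf{e}^{n+1}=(\textbf{T}+\widetilde{\textbf{A}})\,\textbf{e}^{n}+\tau\,\textbf{R}^{n},\qquad \textbf{e}^{0}=\textbf{0},
\]
where $\tau\textbf{R}^n$ is the local truncation error inherited from (\ref{equation3.14}).

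The consistency estimate $\|\textbf{R}^n\|\le C(\tau^2+h^m)$ is, in my view, the real technical content. It follows by applying the asymptotic expansions of the $m$-th order quasi-compact spatial approximation established in Section \ref{sec:2} (Theorem \ref{the2.23} together with the Proposition in Subsection \ref{subsec:2.2}, evaluated at the time level $t_{n+1/2}$) to control the $O(h^m)$ spatial part, combined with the standard $O(\tau^2)$ accuracy of the Crank--Nicolson averaging in time; this is exactly where the smoothness and boundary-compatibility assumptions on $u$ enter. The bookkeeping parallels the one carried out in \cite{Zhou:13}, which is why the present proof can be compressed.

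Next I would rewrite the error equation as $\textbf{T}(\textbf{e}^{n+1}-\textbf{e}^{n})=\widetilde{\textbf{A}}(\textbf{e}^{n+1}+\textbf{e}^{n})+\tau\textbf{R}^{n}$ and take the discrete $L^2$ inner product $\langle\cdot,\cdot\rangle$ (with $\langle\textbf{v},\textbf{v}\rangle=\|\textbf{v}\|^2$) against $\textbf{e}^{n+1}+\textbf{e}^{n}$. Because $\textbf{T}=\textbf{T}^{T}$, the left side telescopes, $\langle\textbf{T}(\textbf{e}^{n+1}-\textbf{e}^{n}),\textbf{e}^{n+1}+\textbf{e}^{n}\rangle=\|\textbf{e}^{n+1}\|_{\textbf{T}}^2-\|\textbf{e}^{n}\|_{\textbf{T}}^2$ with $\|\textbf{v}\|_{\textbf{T}}^2:=\langle\textbf{T}\textbf{v},\textbf{v}\rangle$, and since $\widetilde{\textbf{A}}$ is negative definite, $\langle\widetilde{\textbf{A}}(\textbf{e}^{n+1}+\textbf{e}^{n}),\textbf{e}^{n+1}+\textbf{e}^{n}\rangle\le0$ and may be dropped, giving
\[
\|\textbf{e}^{n+1}\|_{\textbf{T}}^2-\|\textbf{e}^{n}\|_{\textbf{T}}^2\le\tau\,\|\textbf{R}^n\|\,\big(\|\textbf{e}^{n+1}\|+\|\textbf{e}^{n}\|\big).
\]
By Lemma \ref{lemma3.32}, $\textbf{T}$ being symmetric and strictly positive definite has least eigenvalue $c_*>0$, so $\sqrt{c_*}\,\|\textbf{v}\|\le\|\textbf{v}\|_{\textbf{T}}$; inserting this on the right and cancelling the common factor $\|\textbf{e}^{n+1}\|_{\textbf{T}}+\|\textbf{e}^{n}\|_{\textbf{T}}$ (the step being trivial when it vanishes) yields $\|\textbf{e}^{n+1}\|_{\textbf{T}}-\|\textbf{e}^{n}\|_{\textbf{T}}\le \tau\,c_*^{-1/2}\,\|\textbf{R}^n\|$.

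Finally I would sum this inequality from $0$ to $n-1$, use $\textbf{e}^{0}=\textbf{0}$ and $n\tau\le T$ to obtain $\|\textbf{e}^{n}\|_{\textbf{T}}\le c_*^{-1/2}T\max_{0\le k\le M-1}\|\textbf{R}^k\|\le C\,c_*^{-1/2}T(\tau^2+h^m)$, and then pass back through $\|\textbf{e}^{n}\|\le c_*^{-1/2}\|\textbf{e}^{n}\|_{\textbf{T}}$ to conclude $\|u^n-U^n\|\le C(\tau^2+h^m)$ for $1\le n\le M$. The main obstacle is not this recursion --- which is routine once $\textbf{T}$ is symmetric positive definite and $\textbf{A}$ negative definite, and is in fact the $L^2$-norm analogue of the spectral stability statement in Theorem \ref{the3.31} --- but the precise derivation of the truncation error $\textbf{R}^n$ and the identification of the regularity hypotheses on $u$ under which $\|\textbf{R}^n\|=O(\tau^2+h^m)$; for that one relies on the expansions of Section \ref{sec:2}, mirroring \cite{Zhou:13}.
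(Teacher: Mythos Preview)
The paper does not actually give a proof of Theorem~\ref{the3.32}: it states only that ``the error estimates here \ldots\ can be obtained by Lemma~\ref{lemma3.32} and similar analysis as in \cite{Zhou:13}, the proof is omitted.'' Your energy argument is a correct and standard way to fill that gap, and it is precisely in the spirit of the paper's hint: you invoke Lemma~\ref{lemma3.32} to obtain the uniform lower eigenvalue bound $c_*>0$ for the symmetric matrix $\textbf{T}$ (hence the equivalence of $\|\cdot\|$ and $\|\cdot\|_{\textbf{T}}$), and you use the negative definiteness of $\textbf{A}$ (Theorem~\ref{the3.21}) exactly as in the stability proof of Theorem~\ref{the3.31} to discard the diffusion term with the right sign. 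The telescoping step and the summation to $n\tau\le T$ are routine and correct, as is your observation that $\textbf{v}^{T}(K_1\textbf{A}+K_2\textbf{A}^{T})\textbf{v}=(K_1+K_2)\,\textbf{v}^{T}\textbf{A}\textbf{v}<0$ for real nonzero $\textbf{v}$.

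Your remark that the substantive work lies in justifying $\|\textbf{R}^n\|\le C(\tau^2+h^m)$, via the expansions of Section~\ref{sec:2} combined with the $O(\tau^2)$ Crank--Nicolson consistency, is also exactly what the paper defers to \cite{Zhou:13}. So your proposal is both correct and aligned with the (omitted) argument the authors have in mind.
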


\begin{remark}\label{remark3.31}
When $\textbf{T}$ is asymmetric, we find numerically that the CN-quasi-compact scheme (\ref{equation3.18}) is also stable and convergent, if
$\textbf{T}$ is strictly positive definite and $\textbf{A}$ is negative definite, or in more relaxed as well as easier to judge conditions:
$c_{-1}+c_0+c_1=d_{-1}+d_0+d_1$, $c_{0}>|c_{1}|+|c_{-1}|$, $d_{1}>|d_{0}|+|d_{-1}|$, and (\ref{equation2.37}) holds. Although we cannot theoretically prove it, it is believed to be true.
\end{remark}
From Theorem \ref{the3.31}, the second order CN-quasi-compact schemes deduced from (\ref{equation2.114}) are unconditionally stable
if $c_{-1}=c_1$ and $\textbf{A}$ is negative definite. All the second order quasi-compact approximations that can derive the CN-quasi-compact unconditionally stable schemes are listed in Table (\ref{table3.1}); and they are numbered from $1$ to $10$; in particular, the ones from No. 4 to No. 10 can be directly used to solve the two-sided problem (\ref{equation1.1}) with $K_1K_2\neq 0$, since $c_{-1}=c_1$.

%We list all of them in the following table
%and number them from $1$ to $10$. It is reaffirmed that since in the methods number $4$ to number $10$, $c_{-1}=c_1$, they can
%be used to deal with the two-sided problem (\ref{equation1.1}), where $K_1K_2\neq 0$.

\begin{table}
\caption{Second order quasi-compact approximations that can derive the CN-quasi-compact unconditionally stable schemes}\label{table3.1}
\begin{tabular}{ccc|c|cccc|ccc}
\hline\noalign{\smallskip}
%\multicolumn{11}{c}{stable second order approximations}\\
%\noalign{\smallskip}\hline\noalign{\smallskip}
 \multicolumn{3}{c|}{} & \multicolumn{8}{|c}{Eq. (\ref{equation2.114})}  \\
\cline{4-11}
 Eq. & Eq. & Eq. & $(p,q)$ &\multicolumn{4}{|c|}{$(0,1)$} &  \multicolumn{3}{|c}{$(-1,1)$}\\
\cline{4-11}
(\ref{equation2.120})   & (\ref{equation2.117})                & (\ref{equation2.116})
                                     & $(m_1,n_1)$ & $(1,2)$&  $(1,2)$& $(1,3)$& $(2,3)$& $(2,3)$& $(2,4)$& $(3,4)$\\
\multicolumn{3}{c|}{}& $(m_2,n_2)$ & $(0,1)$&  $(1,2)$& $(0,1)$& $(0,1)$& $(0,1)$& $(0,1)$& $(0,1)$\\
\noalign{\smallskip}\hline\noalign{\smallskip}
1&2&3&&4&5&6&7&8&9&10\\
\noalign{\smallskip}\hline
\end{tabular}
\end{table}

Similarly, for the third order schemes, we can still analyze the properties of $\textbf{A}$, $\textbf{T}$, $\textbf{D}^{-1}\textbf{T}$, to prove their unconditional stability. For the simplicity, we list the stable cases in Table \ref{table3.2}; some of them can be easily proven in theory, while others are numerically verified.
%For third order cases, by numerical computation, it seems that if $\textbf{A}$ is positive definite,
%and $\textbf{T}$ is diagonally dominant, then the schemes are stable, no matter whether $\textbf{T}$ is symmetric or not.
%However, the theoretical analysis is much more difficult, and it is still an open issue.
%In Table \ref{table3.2}, we list all stable schemes with third order accuracy (without including the repeated ones)
%by using the number in Table \ref{table3.1}, since they are linear
%combinations of the second order methods.
\begin{table}
\caption{Third order quasi-compact approximations that can derive the CN-quasi-compact unconditionally stable schemes theoretically proved or numerically verified}\label{table3.2}
\begin{tabular}{c|ccccc}
\hline\noalign{\smallskip}
                 &$(1,2)$ & -      &-       &-       &-        \\
 here 1,$\cdots$, 10             &$(1,3)$ &        &-       & -      &-         \\
 correspond to           &$(1,4)$ &$(2,4)$ & -      &  -     &-         \\
the second order        &$(1,5)$ &$(2,5)$ &$(3,5)$ &$(4,5)$ &-        \\
approximations        &$(1,6)$ &$(2,6)$ & -      &   -    & -         \\
No. 1,$\cdots$, No. 10          &$(1,7)$ &$(2,7)$ &$(3,7)$ &     -  & -      \\
 given in Table \ref{table3.1};    &$(1,8)$ &$(2,8)$ &$(3,8)$ &   -    &$(5,8)$ \\
and here $(\cdot,\cdot)$ means the                    &$(1,9)$ &$(2,9)$ &$(3,9)$ &    -   &$(5,9)$ \\
 combination of two approximations                &$(1,10)$&$(2,10)$&$(3,10)$& -      &$(5,10)$\\
\noalign{\smallskip}\hline
\end{tabular}
\end{table}

For the fourth order quasi-compact scheme, we numerically show that the scheme obtained by combining $(1,2)$ and $(1,4)$ is unstable; see the eigenvalues distribution of the iterative matrix in Figure \ref{fig3.1} with $\alpha=1.5,\,N=100$. While combining $(1,2)$ and $(1,8)$ leads to an unconditionally stable fourth order quasi-compact scheme; Figure \ref{fig3.2} shows the eigenvalues distribution of the iterative matrix  with $\alpha=1.5,\,N=100$.

\begin{figure}[!htbp]
\includegraphics[scale=0.4]{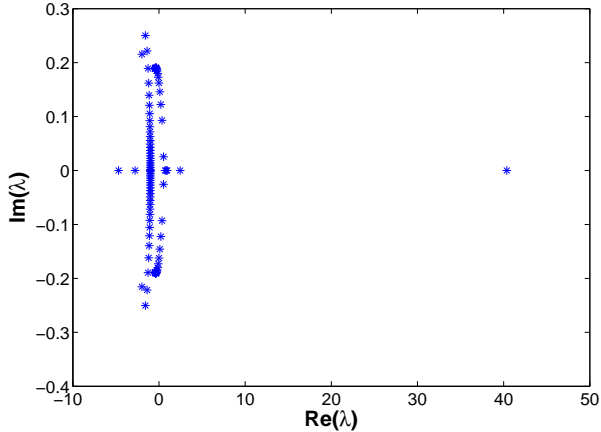}\\
\caption{Eigenvalues of the scheme that is designed by $(1,2)$ and $(1,4)$, where $\alpha=1.5,~N=100$.}\label{fig3.1}
\end{figure}

\begin{figure}[!htbp]
\includegraphics[scale=0.4]{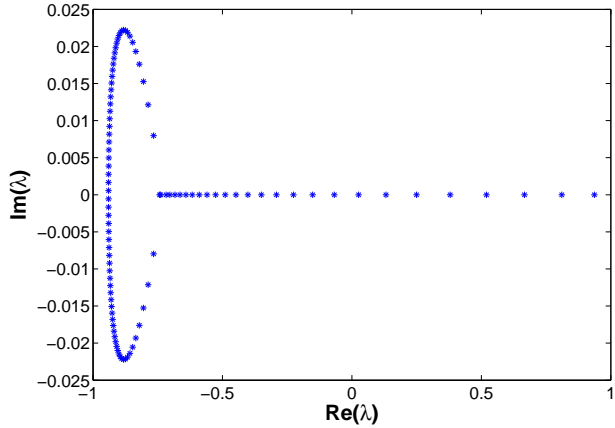}\\
\caption{Eigenvalues of the scheme that is designed by $(1,2)$ and $(1,8)$, where $\alpha=1.5,~N=100$.}\label{fig3.2}
\end{figure}

%\section{Approximations apply to nonhomogeneous problems}\label{sec:add1}
\begin{remark}\label{remark3.32}
While, usually, for a numerical approximation, we only need the regularity for the function $u(x)$ itself.
However, for some of the approximations to fractional derivatives it seems strange that more conditions are actually needed even for the shifted Gr\"{u}nwald-Letnikov formula with first order of accuracy ($u\in L^1(\mathbb{R})$ and $u\in C^{\alpha+1}(\mathbb{R})$ \cite{Meerschaert:04}); this is because the schemes are derived by the Fourier analysis which uses information of the whole domain. Although Theorem \ref{the2.21} in this paper shows that the conditions, which are less rigorous and easier to judge, are enough for general high order schemes; it still means more boundary conditions are needed for the space fractional diffusion problem (\ref{equation1.1}).

By using the techniques similar to \cite{Chen:14}, the schemes discussed in this paper can still be effective and keep the corresponding high order accuracy after removing the extra boundary requirements. More details for the approximations to the Riemann-Liouville fractional derivatives are described in Appendix.  A numerical example is given in the next section to demonstrate that these techniques also work for the time-dependent PDEs.

\end{remark}

\section{Numerical experiments}\label{sec:4}
We perform numerical experiments to show the powerfulness of the derived schemes and confirm the theoretical analysis and convergence orders.
\begin{example}\label{example4.1}
Consider the following problem
\begin{equation}\label{equation4.1}
\frac{\partial u(x,t)}{\partial t}=
\,_{a}D_{x}^{\alpha}u(x,t)-e^{-t}\big(x^{3+\alpha}+\frac{\Gamma(4+\alpha)}{\Gamma(4)}x^3\big),
~~~(x,t)\in(0,1)\times(0,1],
\end{equation}
with the boundary conditions
\begin{equation}
u(0,t)=0,~~~u(1,t)=e^{-t},~~~t\in[0,1],
\end{equation}
and the initial value
\begin{equation}
u(x,0)=x^{3+\alpha},~~~x\in[0,1].
\end{equation}
Then the exact solutions of (\ref{equation4.1}) is $e^{-t}x^{3+\alpha}$.
\end{example}
%Since $u(x,\cdot)\in C^{4}[0,1]$,
%$D^{5}u(x,\cdot) \in L^1[0,1]$,
%and $D^{k}u(0,t)=0,~k=0,\cdots,4$,
%w.r.t. $x$; by Theorem \ref{the2.21},
%it is expected that using the stable CN-quasi-compact schemes to solve (\ref{equation4.1}) leads the convergent orders to the upper limit $4$.
Letting $\tau=h$, $\tau=h/20$, and $\tau=h^2$, respectively, in second, third and fourth order (in terms of spatial direction) stable schemes
can make sure that the numerical errors caused by the Crank-Nicolson method in time direction is small enough, so that errors in spatial direction are dominant and the convergence rates can be testified.
The numerical results in Tables \ref{table4.1} and \ref{table4.2} confirm the convergence orders of the corresponding  CN-quasi-compact schemes.

\begin{table}
\caption{The discrete $L^2$ errors and their convergence rates to Example \ref{example4.1} at $t=1$
by using the second order stable CN-quasi-compact schemes and the fourth order one $(1,2)$+$(1,8)$
for different $\alpha$ with $\tau=h$ and $\tau=h^2$, respectively.}\label{table4.1}
\begin{tabular}{cccccccccc}
\hline\noalign{\smallskip}
\multicolumn{2}{c}{Number}  &\multicolumn{2}{c}{$1$} &\multicolumn{2}{c}{$2$} &\multicolumn{2}{c}{$3$}
       &\multicolumn{2}{c}{$(1,2)$+$(1,8)$}\\
\noalign{\smallskip}\hline\noalign{\smallskip}
$\alpha$&$N$& $\|u-U\|$& rate & $\|u-U\|$ & rate & $\|u-U\|$ & rate & $\|u-U\|$ & rate\\
\noalign{\smallskip}\hline\noalign{\smallskip}
1.1    & 8 & 2.97 1e-3 & -    & 4.07 1e-3 & -    & 2.24 1e-2 & -    & 1.02 1e-5 & -  \\
       & 16& 8.49 1e-4 & 1.81 & 1.06 1e-3 & 1.95 & 5.54 1e-3 & 1.92 & 7.29 1e-7 & 3.81 \\
       & 32& 2.27 1e-4 & 1.90 & 2.68 1e-4 & 1.98 & 1.53 1e-3 & 1.95 & 4.83 1e-8 & 3.92 \\
       & 64& 5.86 1e-5 & 1.95 & 6.75 1e-5 & 1.99 & 3.91 1e-4 & 1.97 & 3.10 1e-9 & 3.96 \\
       &128& 1.49 1e-5 & 1.98 & 1.69 1e-5 & 1.99 & 9.87 1e-4 & 1.99 & 1.97 1e-10& 3.98 \\
\noalign{\smallskip}\hline\noalign{\smallskip}
1.5    & 8 & 2.87 1e-3 & -    & 1.08 1e-3 & -    & 9.49 1e-3 & -    & 5.75 1e-6 & -\\
       & 16& 7.78 1e-4 & 1.88 & 2.57 1e-4 & 2.07 & 2.50 1e-3 & 1.92 & 4.11 1e-7 & 3.80\\
       & 32& 2.02 1e-4 & 1.94 & 6.26 1e-5 & 2.04 & 6.44 1e-4 & 1.96 & 2.74 1e-8 & 3.91\\
       & 64& 5.16 1e-5 & 1.97 & 1.54 1e-5 & 2.02 & 1.63 1e-4 & 1.98 & 1.77 1e-9 & 3.95\\
       &128& 1.30 1e-5 & 1.99 & 3.84 1e-6 & 2.01 & 4.11 1e-5 & 1.99 & 1.12 1e-10& 3.98\\
\noalign{\smallskip}\hline\noalign{\smallskip}
1.9    & 8 & 3.13 1e-3 & -    & 2.28 1e-3 & -    & 4.08 1e-3 & -    & 4.52 1e-6 & -\\
       & 16& 7.97 1e-4 & 1.98 & 5.80 1e-4 & 1.97 & 1.04 1e-3 & 1.98 & 2.99 1e-7 & 3.92 \\
       & 32& 2.01 1e-4 & 1.99 & 1.46 1e-4 & 1.99 & 2.61 1e-4 & 1.99 & 1.92 1e-8 & 3.96\\
       & 64& 5.04 1e-5 & 1.99 & 3.68 1e-5 & 1.99 & 6.55 1e-5 & 1.99 & 1.22 1e-9 & 3.98\\
       &128& 1.26 1e-5 & 2.00 & 9.21 1e-6 & 2.00 & 1.64 1e-5 & 2.00 & 7.66 1e-11& 3.99\\
\noalign{\smallskip}\hline
\end{tabular}
\end{table}

\begin{table}
\caption{The discrete $L^2$ errors and their convergence rates to Example \ref{example4.1} at $t=1$
by using the third order stable CN-quasi-compact schemes for different $\alpha$ with $\tau=h/20$.}\label{table4.2}
\begin{tabular}{cccccccccc}
\hline\noalign{\smallskip}
\multicolumn{2}{c}{Number}  &\multicolumn{2}{c}{$(1,3)$} &\multicolumn{2}{c}{$(1,4)$} &\multicolumn{2}{c}{$(1,5)$}
       &\multicolumn{2}{c}{$(1,8)$}\\
\noalign{\smallskip}\hline\noalign{\smallskip}
$\alpha$&$N$& $\|u-U\|$& rate & $\|u-U\|$ & rate & $\|u-U\|$ & rate & $\|u-U\|$ & rate\\
\noalign{\smallskip}\hline\noalign{\smallskip}
1.1    & 8 & 4.60 1e-4 & -    & 5.72 1e-4 & -    & 3.31 1e-4 & -    & 3.15 1e-4 & -  \\
       & 16& 6.02 1e-5 & 2.93 & 7.78 1e-5 & 2.88 & 4.15 1e-5 & 2.99 & 4.17 1e-5 & 2.92 \\
       & 32& 7.65 1e-6 & 2.98 & 1.01 1e-5 & 2.95 & 5.16 1e-6 & 3.01 & 5.31 1e-6 & 2.97 \\
       & 64& 9.53 1e-7 & 3.01 & 1.27 1e-6 & 2.99 & 6.32 1e-7 & 3.03 & 6.58 1e-7 & 3.01 \\
       &128& 1.16 1e-7 & 3.04 & 1.56 1e-7 & 3.02 & 7.52 1e-8 & 3.07 & 7.90 1e-8 & 3.06 \\
\noalign{\smallskip}\hline\noalign{\smallskip}
1.5    & 8 & 2.22 1e-4 & -    & 2.55 1e-4 & -    & 1.83 1e-4 & -    & 9.42 1e-6 & -  \\
       & 16& 2.85 1e-5 & 2.96 & 3.39 1e-5 & 2.91 & 2.27 1e-5 & 3.01 & 1.57 1e-6 & 2.58 \\
       & 32& 3.58 1e-6 & 2.99 & 4.34 1e-6 & 2.97 & 2.81 1e-6 & 3.02 & 2.44 1e-7 & 2.69 \\
       & 64& 4.43 1e-7 & 3.02 & 5.42 1e-7 & 3.00 & 3.43 1e-7 & 3.04 & 4.01 1e-8 & 2.60 \\
       &128& 5.33 1e-8 & 3.05 & 6.60 1e-8 & 3.04 & 4.06 1e-8 & 3.08 & 7.36 1e-9 & 2.45 \\
\noalign{\smallskip}\hline\noalign{\smallskip}
1.9    & 8 & 3.75 1e-5 & -    & 4.05 1e-5 & -    & 1.26 1e-4 & -    & 3.25 1e-4 & -  \\
       & 16& 4.63 1e-6 & 3.02 & 5.43 1e-6 & 2.90 & 1.53 1e-5 & 3.04 & 4.23 1e-5 & 2.94\\
       & 32& 5.57 1e-7 & 3.06 & 7.18 1e-7 & 2.92 & 1.87 1e-6 & 3.03 & 5.39 1e-6 & 2.97 \\
       & 64& 6.37 1e-8 & 3.13 & 9.97 1e-8 & 2.89 & 2.26 1e-7 & 3.05 & 6.84 1e-7 & 2.98 \\
       &128& 6.48 1e-9 & 3.30 & 1.37 1e-8 & 2.82 & 2.67 1e-8 & 3.09 & 8.73 1e-8 & 2.97 \\
\noalign{\smallskip}\hline
\end{tabular}
\end{table}

\begin{example}\label{example4.2}
In the domain $(0,1)\times (0,1)$, consider the following problem
\begin{equation}\label{equation4.2}
\left\{ \begin{array}{lll}
\frac{\partial u(x,t) }{\partial t}&=&\,_{0}D_x^{\alpha}u(x,t)+\, _{x}D_{1}^{\alpha}u(x,t)+f(x,t), \\
u(x,0) &=&x^3(1-x)^3 ~~~~ {\rm for}~~~ x\in[0,1], \\
u(0,t)&=&u(1,t)=0 ~~~~{\rm for}~~~ t\in[0,1],\\
\end{array} \right.
\end{equation}
with the source term
\begin{eqnarray*}
&&f(x,t)
\nonumber\\
&=&-e^{-t}\bigg(x^3(1-x)^3+\frac{\Gamma(4)}{\Gamma(4-\alpha)}\big(x^{3-\alpha}+(1-x)^{3-\alpha}\big)
\nonumber\\
&&~~~~~~~~~-3\times\frac{\Gamma(5)}{\Gamma(5-\alpha)}\big(x^{4-\alpha}+(1-x)^{4-\alpha}\big)
\nonumber\\
&&~~~~~~~~~+3\times\frac{\Gamma(6)}{\Gamma(6-\alpha)}\big(x^{5-\alpha}+(1-x)^{5-\alpha}\big)
\nonumber\\
&&~~~~~~~~~-\frac{\Gamma(7)}{\Gamma(7-\alpha)}\big(x^{6-\alpha}+(1-x)^{6-\alpha}\big)
\bigg).
\end{eqnarray*}
Then the exact solutions of (\ref{equation4.2}) is $e^{-t}x^{3}(1-x)^3$.
\end{example}
%Since $u(x,\cdot)\in C^{3}[0,1]$,
%$D^{4}u(x,\cdot) \in L^1[a,b]$,
%and $D^{k}u(0,t)=D^{k}u(1,t)=0,~k=0,\cdots,3$,
%w.r.t $x$; by Theorem \ref{the2.21}, it is expected that by using the stable CN-quasi-compact schemes to approximate (\ref{equation4.2}), the highest convergent order that can be detected is $3$.
Letting $\tau=h$ and $\tau=h/20$ respectively in second and third order (in terms of spatial direction) stable schemes
can make sure that the numerical errors caused by the Crank-Nicolson method in time direction is small enough, so that errors in spatial direction are dominant and the convergence rates can be testified. The numerical results in Tables \ref{table4.3}, \ref{table4.4} confirm the theoretical convergence orders $2$ and $3$.

\begin{table}
\caption{The discrete $L^2$ errors and their convergence rates to Example \ref{example4.2} at $t=1$
by using the second order stable CN-quasi-compact schemes
for different $\alpha$ with $\tau=h$.}\label{table4.3}
\begin{tabular}{cccccccccc}
\hline\noalign{\smallskip}
\multicolumn{2}{c}{Number}   &\multicolumn{2}{c}{$4$} &\multicolumn{2}{c}{$5$} &\multicolumn{2}{c}{$8$}
       &\multicolumn{2}{c}{$9$}\\
\noalign{\smallskip}\hline\noalign{\smallskip}
$\alpha$&$N$& $\|u-U\|$& rate & $\|u-U\|$ & rate & $\|u-U\|$ & rate & $\|u-U\|$ & rate\\
\noalign{\smallskip}\hline\noalign{\smallskip}
1.1    & 8 & 2.67 1e-4 & -    & 1.86 1e-4 & -    & 3.12 1e-4 & -    & 2.61 1e-4 & -  \\
       & 16& 6.83 1e-5 & 1.97 & 4.12 1e-5 & 2.17 & 5.53 1e-5 & 2.49 & 5.88 1e-5 & 2.15 \\
       & 32& 1.75 1e-5 & 1.97 & 9.51 1e-6 & 2.12 & 2.00 1e-5 & 1.47 & 2.57 1e-5 & 1.19 \\
       & 64& 4.44 1e-6 & 1.97 & 2.27 1e-6 & 2.06 & 7.40 1e-6 & 1.44 & 9.03 1e-6 & 1.51 \\
       &128& 1.12 1e-7 & 1.98 & 5.55 1e-7 & 2.03 & 2.23 1e-6 & 1.73 & 2.65 1e-6 & 1.77 \\
\noalign{\smallskip}\hline\noalign{\smallskip}
1.5    & 8 & 2.18 1e-4 & -    & 7.25 1e-5 & -    & 2.78 1e-4 & -    & 3.46 1e-4 & -\\
       & 16& 5.35 1e-5 & 2.03 & 1.54 1e-5 & 2.24 & 8.43 1e-5 & 1.72 & 1.01 1e-4 & 1.77\\
       & 32& 1.35 1e-5 & 1.99 & 3.44 1e-6 & 2.16 & 2.78 1e-5 & 1.89 & 2.70 1e-5 & 1.91\\
       & 64& 3.39 1e-6 & 1.99 & 8.14 1e-7 & 2.08 & 5.94 1e-6 & 1.94 & 6.99 1e-6 & 1.95\\
       &128& 8.52 1e-7 & 1.99 & 1.98 1e-7 & 2.04 & 1.52 1e-6 & 1.97 & 1.78 1e-6 & 1.97\\
\noalign{\smallskip}\hline\noalign{\smallskip}
1.9    & 8 & 1.14 1e-4 & -    & 6.48 1e-5 & -    & 1.38 1e-4 & -    & 1.51 1e-4 & -\\
       & 16& 3.50 1e-5 & 1.70 & 1.84 1e-5 & 1.81 & 4.25 1e-5 & 1.70 & 4.66 1e-5 & 1.70 \\
       & 32& 8.72 1e-6 & 2.00 & 4.58 1e-6 & 2.01 & 1.08 1e-5 & 1.98 & 1.18 1e-5 & 1.98\\
       & 64& 2.18 1e-6 & 2.00 & 1.14 1e-6 & 2.00 & 2.71 1e-6 & 1.99 & 2.97 1e-6 & 1.99\\
       &128& 5.44 1e-7 & 2.00 & 2.86 1e-7 & 2.00 & 6.79 1e-7 & 2.00 & 7.44 1e-7 & 2.00\\
\noalign{\smallskip}\hline
\end{tabular}
\end{table}

\begin{table}
\caption{The discrete $L^2$ errors and their convergence rates to Example \ref{example4.2} at $t=1$
by using the third order stable CN-quasi-compact schemes for different $\alpha$ with $\tau=h/20$.}\label{table4.4}
\begin{tabular}{cccccccccc}
\hline\noalign{\smallskip}
\multicolumn{2}{c}{Number}   &\multicolumn{2}{c}{$(4,5)$} &\multicolumn{2}{c}{$(5,8)$} &\multicolumn{2}{c}{$(5,9)$}
       &\multicolumn{2}{c}{$(5,10)$}\\
\noalign{\smallskip}\hline\noalign{\smallskip}
$\alpha$&$N$& $\|u-U\|$& rate & $\|u-U\|$ & rate & $\|u-U\|$ & rate & $\|u-U\|$ & rate\\
\noalign{\smallskip}\hline\noalign{\smallskip}
1.1    & 8 & 4.64 1e-5 & -    & 2.20 1e-4 & -    & 2.04 1e-4 & -    & 1.56 1e-4 & -  \\
       & 16& 6.68 1e-6 & 2.80 & 3.80 1e-5 & 2.54 & 3.46 1e-5 & 2.56 & 2.53 1e-5 & 2.63\\
       & 32& 9.12 1e-7 & 2.86 & 5.47 1e-6 & 2.80 & 4.95 1e-6 & 2.80 & 3.57 1e-6 & 2.83 \\
       & 64& 1.22 1e-7 & 2.91 & 7.36 1e-7 & 2.89 & 6.66 1e-7 & 2.89 & 4.78 1e-7 & 2.90 \\
       &128& 1.60 1e-8 & 2.94 & 9.62 1e-8 & 2.94 & 8.69 1e-8 & 2.94 & 6.22 1e-8 & 2.94 \\
\noalign{\smallskip}\hline\noalign{\smallskip}
1.5    & 8 & 3.36 1e-5 & -    & 5.12 1e-5 & -    & 4.89 1e-5 & -    & 4.39 1e-5 & -    \\
       & 16& 4.66 1e-6 & 2.85 & 7.60 1e-6 & 2.75 & 7.22 1e-6 & 2.76 & 6.39 1e-6 & 2.78 \\
       & 32& 6.51 1e-7 & 2.84 & 1.08 1e-6 & 2.82 & 1.02 1e-6 & 2.82 & 9.03 1e-7 & 2.82 \\
       & 64& 8.97 1e-8 & 2.86 & 1.49 1e-7 & 2.86 & 1.41 1e-7 & 2.86 & 1.25 1e-7 & 2.86 \\
       &128& 1.21 1e-8 & 2.88 & 2.01 1e-8 & 2.89 & 1.91 1e-8 & 2.89 & 1.68 1e-8 & 2.89 \\
\noalign{\smallskip}\hline\noalign{\smallskip}
1.8    & 8 & 2.82 1e-5 & -    & 2.15 1e-5 & -    & 2.21 1e-5 & -    & 2.35 1e-5 & -  \\
       & 16& 2.90 1e-6 & 3.29 & 1.16 1e-6 & 4.22 & 1.33 1e-6 & 4.05 & 1.77 1e-6 & 3.74\\
       & 32& 3.46 1e-7 & 3.07 & 5.40 1e-8 & 4.42 & 8.67 1e-8 & 3.94 & 1.64 1e-7 & 3.43 \\
       & 64& 4.43 1e-8 & 2.97 & 1.98 1e-9 & 4.77 & 7.33 1e-9 & 3.58 & 1.86 1e-8 & 3.14 \\
       &128& 5.81 1e-9 & 2.93 & 1.67 1e-10& 3.57 & 8.31 1e-10& 3.12 & 2.36 1e-9 & 2.98 \\
\noalign{\smallskip}\hline
\end{tabular}
\end{table}

\begin{example}\label{example4.add}
Consider the nonhomogeneous problem
\begin{equation}\label{equation4.add1}
\left\{ \begin{array}{lll}
\frac{\partial u(x,t) }{\partial t}&=&\,_{0}D_x^{\alpha}u(x,t)+f(x,t), \\
u(x,0) &=&1+x+x^{3+\alpha} ~~~~ {\rm for}~~~ x\in[0,1], \\
u(0,t)&=&e^{-t} ~~~~{\rm for}~~~ t\in[0,1],\\
u(1,t)&=&3e^{-t} ~~~~{\rm for}~~~ t\in[0,1],\\
\end{array} \right.
\end{equation}
with the source term
\begin{eqnarray*}
&&f(x,t)
\nonumber\\
&=&-e^{-t}\bigg(1+x+x^{3+\alpha}+\frac{1}{\Gamma(1-\alpha)}x^{-\alpha}
\nonumber\\
&&~~~~~~~~~+\frac{1}{\Gamma(2-\alpha)}x^{1-\alpha}+\frac{\Gamma(4+\alpha)}{\Gamma(4)}x^{3}
\bigg).
\end{eqnarray*}
Then the exact solutions of (\ref{equation4.add1}) is $e^{-t}(1+x+x^{3+\alpha})$.
\end{example}
We apply the CN-quasi-compact schemes to (\ref{equation4.add1}).
Letting $\tau=h$, $\tau=h/20$, and $\tau=h^2$, respectively in second, third, and fourth order (in terms of spatial direction) stable schemes
can make sure that the numerical errors caused by the Crank-Nicolson method in time direction is small enough, so that errors in spatial direction are dominant and the convergence rates can be testified. The numerical results in Tables \ref{table4.add1} and \ref{table4.add2} indicate the effectiveness of these schemes for nonhomogeneous problems.

\begin{table}
\caption{The discrete $L^2$ errors and their convergence rates to Example \ref{example4.add} at $t=1$
by using the second order stable CN-pseudo-compact schemes and the fourth order one $(1,2)$+$(1,8)$
for different $\alpha$ with $\tau=h$ and $\tau=h^2$, respectively.}\label{table4.add1}
\begin{tabular}{cccccccccc}
\hline\noalign{\smallskip}
\multicolumn{2}{c}{Number}   &\multicolumn{2}{c}{$3$} &\multicolumn{2}{c}{$4$} &\multicolumn{2}{c}{$8$} &\multicolumn{2}{c}{$(1,2)+(1,8)$}\\
\noalign{\smallskip}\hline\noalign{\smallskip}
$\alpha$&$N$& $\|u-U\|$& rate & $\|u-U\|$ & rate & $\|u-U\|$ & rate & $\|u-U\|$ & rate\\
\noalign{\smallskip}\hline\noalign{\smallskip}
1.1    & 8 & 2.48 1e-3 & -    & 1.31 1e-3 & -    & 3.56 1e-3 & -    & 1.75 1e-5 & -  \\
       & 16& 8.50 1e-4 & 1.53 & 3.83 1e-4 & 1.78 & 8.67 1e-4 & 2.04 & 1.22 1e-6 & 3.84\\
       & 32& 2.58 1e-4 & 1.72 & 1.22 1e-4 & 1.65 & 2.64 1e-4 & 1.72 & 8.06 1e-7 & 3.92 \\
       & 64& 6.68 1e-5 & 1.95 & 3.18 1e-5 & 1.95 & 7.01 1e-5 & 1.91 & 5.17 1e-9 & 3.96\\
       &128& 1.78 1e-5 & 1.99 & 7.97 1e-6 & 1.99 & 1.77 1e-5 & 1.99 & 3.33 1e-10& 3.96 \\
\noalign{\smallskip}\hline\noalign{\smallskip}
1.5    & 8 & 6.28 1e-3 & -    & 5.30 1e-3 & -    & 7.76 1e-3 & -    & 5.66 1e-6 & -\\
       & 16& 1.58 1e-3 & 1.99 & 1.29 1e-4 & 2.04 & 1.97 1e-3 & 1.98 & 4.75 1e-7 & 3.57\\
       & 32& 3.93 1e-4 & 2.01 & 3.12 1e-4 & 2.05 & 4.91 1e-4 & 2.00 & 3.48 1e-8 & 3.77\\
       & 64& 9.64 1e-4 & 2.03 & 7.51 1e-5 & 2.05 & 1.21 1e-4 & 2.02 & 2.35 1e-9 & 3.88\\
       &128& 2.36 1e-5 & 2.03 & 1.81 1e-5 & 2.05 & 2.97 1e-5 & 2.03 & 1.47 1e-10& 4.00\\
\noalign{\smallskip}\hline\noalign{\smallskip}
1.9    & 8 & 4.11 1e-3 & -    & 4.04 1e-3 & -    & 4.74 1e-3 & -    & 3.41 1e-6 & -\\
       & 16& 1.04 1e-3 & 1.98 & 1.02 1e-3 & 1.98 & 1.22 1e-3 & 1.96 & 2.70 1e-7 & 3.66\\
       & 32& 2.62 1e-4 & 1.99 & 2.57 1e-4 & 1.99 & 3.01 1e-4 & 1.98 & 1.92 1e-8 & 3.81\\
       & 64& 6.58 1e-5 & 1.99 & 6.44 1e-5 & 2.00 & 7.82 1e-5 & 1.99 & 1.28 1e-9 & 3.90\\
       &128& 1.65 1e-5 & 2.00 & 1.62 1e-5 & 2.00 & 1.96 1e-5 & 1.99 & 8.41 1e-11& 3.93\\
\noalign{\smallskip}\hline
\end{tabular}
\end{table}

\begin{table}
\caption{The discrete $L^2$ errors and their convergence rates to Example \ref{example4.add} at $t=1$
by using the third order stable CN-pseudo-compact schemes for different $\alpha$ with $\tau=h/20$.}\label{table4.add2}
\begin{tabular}{cccccccccc}
\hline\noalign{\smallskip}
\multicolumn{2}{c}{Number}  &\multicolumn{2}{c}{$(1,3)$} &\multicolumn{2}{c}{$(1,4)$} &\multicolumn{2}{c}{$(1,5)$}
       &\multicolumn{2}{c}{$(4,5)$}\\
\noalign{\smallskip}\hline\noalign{\smallskip}
$\alpha$&$N$& $\|u-U\|$& rate & $\|u-U\|$ & rate & $\|u-U\|$ & rate & $\|u-U\|$ & rate\\
\noalign{\smallskip}\hline\noalign{\smallskip}
1.1    & 8 & 4.63 1e-4 & -    & 6.15 1e-4 & -    & 3.09 1e-4 & -    & 4.62 1e-5 & -  \\
       & 16& 5.83 1e-5 & 2.99 & 7.72 1e-5 & 2.99 & 3.91 1e-5 & 2.98 & 5.62 1e-6 & 3.04 \\
       & 32& 7.26 1e-6 & 3.00 & 9.63 1e-6 & 3.00 & 4.85 1e-6 & 3.01 & 6.09 1e-7 & 3.21 \\
       & 64& 8.90 1e-7 & 3.03 & 1.09 1e-6 & 3.02 & 5.85 1e-7 & 3.05 & 5.07 1e-8 & 3.59 \\
       &128& 1.06 1e-7 & 3.08 & 1.43 1e-7 & 3.05 & 6.71 1e-8 & 3.13 & 1.88 1e-9 & 4.75 \\
\noalign{\smallskip}\hline\noalign{\smallskip}
1.5    & 8 & 2.21 1e-4 & -    & 2.71 1e-4 & -    & 1.71 1e-4 & -    & 1.47 1e-4 & -  \\
       & 16& 2.82 1e-5 & 2.97 & 3.45 1e-5 & 2.97 & 2.20 1e-5 & 2.96 & 1.88 1e-5 & 2.86 \\
       & 32& 3.54 1e-6 & 3.00 & 4.33 1e-6 & 2.99 & 2.74 1e-6 & 3.00 & 2.33 1e-6 & 3.01 \\
       & 64& 4.33 1e-7 & 3.03 & 5.34 1e-7 & 3.02 & 3.32 1e-7 & 3.04 & 2.81 1e-7 & 3.05 \\
       &128& 5.12 1e-8 & 3.08 & 6.39 1e-8 & 3.06 & 3.85 1e-8 & 3.11 & 3.21 1e-8 & 3.13 \\
\noalign{\smallskip}\hline\noalign{\smallskip}
1.9    & 8 & 3.92 1e-5 & -    & 2.16 1e-5 & -    & 1.02 1e-4 & -    & 5.72 1e-5 & -  \\
       & 16& 4.68 1e-6 & 3.07 & 4.80 1e-6 & 2.17 & 1.43 1e-5 & 2.83 & 7.55 1e-6 & 2.92\\
       & 32& 5.54 1e-7 & 3.08 & 7.08 1e-7 & 2.76 & 1.82 1e-6 & 2.98 & 9.33 1e-7 & 3.02 \\
       & 64& 6.23 1e-8 & 3.15 & 9.80 1e-8 & 2.85 & 2.23 1e-7 & 3.03 & 1.10 1e-8 & 3.08 \\
       &128& 6.12 1e-9 & 3.35 & 1.41 1e-9 & 2.80 & 2.62 1e-8 & 3.09 & 1.21 1e-9 & 3.18 \\
\noalign{\smallskip}\hline
\end{tabular}
\end{table}

\begin{example}\label{example4.3}
There is no challenge introduced if using the idea of this paper and combining with alternating directions methods to solve high dimensional problems. In this example, the following problem
\begin{eqnarray}\label{equation4.3}
\frac{\partial u(x,y,t)}{\partial t}&=&
\,_{0}D_{x}^{\alpha}u(x,y,t)+\,_{x}D_{1}^{\alpha}u(x,y,t)
\nonumber\\
&&+\,_{0}D_{y}^{\beta}u(x,y,t)+\,_{y}D_{1}^{\beta}u(x,y,t)+f(x,y,t)
\end{eqnarray}
is considered in the domain $\Omega=(0,1)^2$ and $t>0$, $1<\alpha,\beta \leq 2$, with the boundary conditions
\begin{equation}
u(x,y,t)=0,~~~(x,y)\in \partial\Omega,~~~t\in[0,1],
\end{equation}
and the initial value
\begin{equation}
u(x,y,0)=x^{3}(1-x)^{3}y^{3}(1-y)^{3},~~~(x,y)\in[0,1]^2.
\end{equation}
The source term is
\begin{eqnarray*}
&&f(x,y,t)
\nonumber\\
&=&-e^{-t}\bigg[x^3(1-x)^3 y^3(1-y)^3
\nonumber\\
&&+\bigg(\frac{\Gamma(4)}{\Gamma(4-\alpha)}\big(x^{3-\alpha}+(1-x)^{3-\alpha}\big)
\nonumber\\
&&-3\times\frac{\Gamma(5)}{\Gamma(5-\alpha)}\big(x^{4-\alpha}+(1-x)^{4-\alpha}\big)
\nonumber\\
&&+3\times\frac{\Gamma(6)}{\Gamma(6-\alpha)}\big(x^{5-\alpha}+(1-x)^{5-\alpha}\big)
\nonumber\\
&&-\frac{\Gamma(7)}{\Gamma(7-\alpha)}\big(x^{6-\alpha}+(1-x)^{6-\alpha}\big)\bigg)y^3(1-y)^3
\nonumber\\
&&+\bigg(\frac{\Gamma(4)}{\Gamma(4-\beta)}\big(y^{3-\beta}+(1-y)^{3-\beta}\big)
\nonumber\\
&&-3\times\frac{\Gamma(5)}{\Gamma(5-\beta)}\big(y^{4-\beta}+(1-y)^{4-\beta}\big)
\nonumber\\
&&+3\times\frac{\Gamma(6)}{\Gamma(6-\beta)}\big(y^{5-\beta}+(1-y)^{5-\beta}\big)
\nonumber\\
&&-\times\frac{\Gamma(7)}{\Gamma(7-\beta)}\big(y^{6-\beta}+(1-y)^{6-\beta}\big)x^4(1-x)^4
\bigg].
\end{eqnarray*}
And the exact solutions is given by $e^{-t}x^{3}(1-x)^3 y^{3}(1-y)^3$.
\end{example}

In Tables \ref{table4.31} and \ref{table4.32}, the quasi-compact Peaceman-Rachford ADI scheme is used to testify the effectiveness of
some second order and third order stable CN-quasi-compact schemes. More ADI schemes can be see in \cite{Tian:12} and \cite{Zhou:13}.
Letting $\tau=h$ and $\tau=h/20$, respectively, in second and third order (in terms of spatial direction) stable schemes
can make sure that the numerical errors caused by the Crank-Nicolson method in time direction is small enough, so that errors in spatial direction are dominant and the convergence rates can be testified.
\begin{table}
\caption{The discrete $L^2$ errors and their convergence rates to Example \ref{example4.3} at $t=1$
by using the second order stable quasi-compact Peaceman-Rachford ADI schemes
for different $\alpha$ and $\beta$ with $\tau=h$.}\label{table4.31}
\begin{tabular}{cccccccccc}
\hline\noalign{\smallskip}
\multicolumn{2}{c}{Number}   &\multicolumn{2}{c}{$4$} &\multicolumn{2}{c}{$5$} &\multicolumn{2}{c}{$8$}
       &\multicolumn{2}{c}{$9$}\\
\noalign{\smallskip}\hline\noalign{\smallskip}
$(\alpha,\beta)$&$N$& $\|u-U\|$& rate & $\|u-U\|$ & rate & $\|u-U\|$ & rate & $\|u-U\|$ & rate\\
\noalign{\smallskip}\hline\noalign{\smallskip}
(1.1,1.9)    & 8 & 1.45 1e-6 & -    & 7.88 1e-7 & -    & 1.54 1e-6 & -    & 1.65 1e-6 & -  \\
             & 16& 3.76 1e-7 & 1.95 & 1.92 1e-7 & 2.04 & 4.26 1e-7 & 1.86 & 4.67 1e-7 & 1.82 \\
             & 32& 9.35 1e-8 & 2.01 & 4.71 1e-8 & 2.03 & 1.13 1e-7 & 1.92 & 1.25 1e-7 & 1.90 \\
             & 64& 2.34 1e-8 & 2.00 & 1.17 1e-8 & 2.01 & 2.97 1e-8 & 1.93 & 3.29 1e-8 & 1.93 \\
             &128& 5.84 1e-9 & 2.00 & 2.93 1e-9 & 2.00 & 7.67 1e-9 & 1.95 & 8.48 1e-9 & 1.96 \\
\noalign{\smallskip}\hline\noalign{\smallskip}
(1.1,1.5)    & 8 & 2.25 1e-6 & -    & 7.82 1e-7 & -    & 2.23 1e-6 & -    & 2.91 1e-6 & -\\
             & 16& 5.58 1e-7 & 2.01 & 1.52 1e-7 & 2.37 & 7.80 1e-7 & 1.64 & 9.34 1e-7 & 1.64\\
             & 32& 1.41 1e-7 & 1.99 & 3.26 1e-8 & 2.22 & 2.22 1e-7 & 1.81 & 2.64 1e-7 & 1.82\\
             & 64& 3.54 1e-8 & 1.99 & 7.54 1e-9 & 2.11 & 6.03 1e-8 & 1.88 & 7.09 1e-8 & 1.90\\
             &128& 8.91 1e-9 & 1.99 & 1.81 1e-9 & 2.06 & 1.58 1e-8 & 1.93 & 1.85 1e-8 & 1.94\\
\noalign{\smallskip}\hline\noalign{\smallskip}
(1.4,1.5)    & 8 & 2.74 1e-6 & -    & 7.75 1e-7 & -    & 2.99 1e-6 & -    & 3.67 1e-6 & -\\
             & 16& 6.82 1e-7 & 2.01 & 1.41 1e-7 & 2.46 & 9.70 1e-7 & 1.62 & 1.15 1e-6 & 1.67 \\
             & 32& 2.72 1e-7 & 1.99 & 2.87 1e-8 & 2.29 & 2.72 1e-7 & 1.84 & 3.18 1e-7 & 1.86\\
             & 64& 4.33 1e-8 & 1.99 & 6.37 1e-9 & 2.17 & 7.19 1e-8 & 1.92 & 8.36 1e-8 & 1.93\\
             &128& 1.01 1e-8 & 1.99 & 1.49 1e-9 & 2.10 & 1.85 1e-8 & 1.96 & 2.14 1e-8 & 1.96\\
\noalign{\smallskip}\hline
\end{tabular}
\end{table}

\begin{table}
\caption{The discrete $L^2$ errors and their convergence rates to Example \ref{example4.3} at $t=1$
by using the third order stable quasi-compact Peaceman-Rachford ADI schemes for different $\alpha$ and $\beta$
with $\tau=h/20$.}\label{table4.32}
\begin{tabular}{cccccccccc}
\hline\noalign{\smallskip}
\multicolumn{2}{c}{Number}   &\multicolumn{2}{c}{$(4,5)$} &\multicolumn{2}{c}{$(5,8)$} &\multicolumn{2}{c}{$(5,9)$}
       &\multicolumn{2}{c}{$(5,10)$}\\
\noalign{\smallskip}\hline\noalign{\smallskip}
$(\alpha,\beta)$&$N$& $\|u-U\|$& rate & $\|u-U\|$ & rate & $\|u-U\|$ & rate & $\|u-U\|$ & rate\\
\noalign{\smallskip}\hline\noalign{\smallskip}
(1.1,1.9)    & 8 & 2.35 1e-7 & -    & 4.59 1e-7 & -    & 4.15 1e-7 & -    & 3.12 1e-7 & -  \\
             & 16& 2.07 1e-8 & 3.50 & 6.64 1e-8 & 2.79 & 5.89 1e-8 & 2.81 & 4.09 1e-8 & 2.93 \\
             & 32& 2.20 1e-9 & 3.24 & 9.24 1e-9 & 2.85 & 8.17 1e-9 & 2.85 & 5.59 1e-9 & 2.87 \\
             & 64& 2.61 1e-10& 3.07 & 1.23 1e-9 & 2.91 & 1.08 1e-9 & 2.91 & 7.41 1e-10& 2.92 \\
             &128& 3.27 1e-11& 3.00 & 1.59 1e-10& 2.95 & 1.40 1e-10& 2.95 & 9.55 1e-11& 2.96 \\
\noalign{\smallskip}\hline\noalign{\smallskip}
(1.1,1.5)    & 8 & 3.20 1e-7 & -    & 9.31 1e-7 & -    & 8.56 1e-7 & -    & 6.60 1e-7 & -  \\
             & 16& 4.41 1e-8 & 2.86 & 1.37 1e-7 & 2.76 & 1.26 1e-7 & 2.77 & 9.55 1e-8 & 2.79 \\
             & 32& 6.04 1e-9 & 2.87 & 1.87 1e-8 & 2.88 & 1.71 1e-8 & 2.88 & 1.29 1e-8 & 2.88 \\
             & 64& 8.20 1e-10& 2.88 & 2.46 1e-9 & 2.93 & 2.25 1e-9 & 2.93 & 1.71 1e-9 & 2.92 \\
             &128& 1.10 1e-10& 2.90 & 3.18 1e-10& 2.95 & 2.91 1e-10& 2.95 & 2.23 1e-10& 2.94 \\
\noalign{\smallskip}\hline\noalign{\smallskip}
(1.4,1.5)    & 8 & 3.86 1e-7 & -    & 6.89 1e-7 & -    & 6.50 1e-7 & -    & 5.63 1e-7 & -  \\
             & 16& 5.16 1e-8 & 2.90 & 9.74 1e-8 & 2.82 & 9.15 1e-8 & 2.83 & 7.84 1e-8 & 2.84\\
             & 32& 6.92 1e-9 & 2.90 & 1.32 1e-8 & 2.89 & 1.24 1e-8 & 2.89 & 1.06 1e-8 & 2.89 \\
             & 64& 9.27 1e-10& 2.90 & 1.75 1e-9 & 2.91 & 1.65 1e-9 & 2.91 & 1.41 1e-9 & 2.91 \\
             &128& 1.23 1e-10& 2.91 & 2.31 1e-10& 2.92 & 2.17 1e-10& 2.92 & 1.87 1e-10& 2.92 \\
\noalign{\smallskip}\hline
\end{tabular}
\end{table}

\section{Conclusion}\label{sec:5}

This paper focuses on developing a series quasi-compact schemes for space fractional diffusion equations; and the schemes can be easily extended to general space fractional PDEs. The so-called quasi-compactness for fractional PDEs which are nonlocal means that while the high order schemes are designed, no points outside of the domain of the solution are used. The ideas of developing the schemes are based on the superconvergence of the Gr\"{u}nwald approximation to Riemann-Liouville derivative at a special point. And the strategy to design any desired high order scheme is presented. The detailed stability and convergence analysis are performed for some of the derived schemes. The extensive numerical experiments, including one and two dimensional equations, are performed to show the effectiveness of the derived schemes. In particular, the techniques for treating the nonhomogeneous boundary conditions are introduced, which can keep the potential high accuracy of the schemes and at the same time do not need to specify any more non-physical conditions on the boundaries.

%In this paper, we discuss serious of high order approximations to space fractional derivatives, which we name as
%``practical" schemes.
%To express the approximation errors for the left (right) derivative, much more relaxed conditions are given in this paper,
%especially without limiting zero conditions on the right (left) boundary.
%The third and higher order ones that we are interested in can be generally derived in a mechanical way.
%To discrete a space fractional diffusion problem, the matrices have an uniform form and this has no relationship with
%the approximations' order, i.e., we do no need to exploit more nodes information when using higher order schemes.
%The stability analysis is
%theoretically proved  for the detailed schemes respectively, and the effectiveness of the discretizations is numerically verified.

\begin{acknowledgements}
The authors thank WenYi Tian for the discussions.
\end{acknowledgements}

%%%%%%%%%%%%%%%%%%%%%%%%%%%%%%%%%%%%%%%%%%%%%%%%%%%%%%%%%%%%%%%
\appendix
\renewcommand{\appendixname}{Appendix~\Alph{section}}
\section{Appendix}

Here we show in detail how to apply the approximations discussed in this paper to the Riemann-Liouville derivatives of a function with nonhomogeneous boundaries. And then we derive a general numerical scheme to the nonhomogeneous steady state problem
\begin{equation}\label{equation_add1.1}
\left\{ \begin{array}{l}
-\,_{0}D_{x}^{\alpha}u(x)+b(x)u(x)=f(x),~~~x \in (0,1),\\
u(0)=\phi_{0},\\
u(1)=\phi_{1}
\end{array} \right.
\end{equation}
with $1<\alpha<2$, $b(x)\geq 0$. Combining the obtained scheme and the discretization of time derivative leads to the scheme for the time-dependent space fractional PDE with nonhomogeneous boundary conditions; see numerical results of Example \ref{example4.add}: Tables \ref{table_add1.1} and \ref{table_add1.2}.
% The corresponding fractional PDEs shall be studied in our follow-up work in detail.

\subsection{Derivation of the general numerical scheme}\label{subsec_add1.1}

To begin, a fundamental lemma is listed as follows.

\begin{lemma}\label{lemma_add1.1}
If $y\in C^n[a,b],~D^{(n+1)}y \in L^1[a,b]$, then for any $h>0$, $x_j=jh+a$ and some given positive integer $n$, there exists an unique set of $\{a_j^l\}_{j=0}^{n+l}$ such that
\begin{equation}
\sum_{j=0}^{n+l} a_j^l y(x_j)=\frac{y^{(l)}(x_0)}{l!}h^l+O(h^{n+l+1})
\end{equation}
i.e.
\begin{equation}\label{equation_add1.2}
\sum_{j=0}^{n+l} h^{-l} a_j^l y(x_j)=\frac{y^{(l)}(x_0)}{l!}+O(h^{n+1})
\end{equation}
holds, for $l=0,1,\cdots,n$.
\end{lemma}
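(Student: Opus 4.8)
The plan is to pin down the coefficients by the classical method of undetermined coefficients, exploiting the requirement that the functional $y\mapsto \sum_{j=0}^{n+l}a_j^l\,y(x_j)$ reproduce $\tfrac{y^{(l)}(x_0)}{l!}h^l$ \emph{exactly} on every polynomial of degree at most $n+l$, and then to control the remaining error by Taylor's formula.

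First I would set up the linear system. Testing the desired identity on the smooth (hence admissible) functions $y(x)=(x-x_0)^k$, $k=0,1,\dots,n+l$, and using $\big((x-x_0)^k\big)^{(l)}\big|_{x=x_0}=l!\,\delta_{k,l}$, the left side becomes $\sum_{j=0}^{n+l}a_j^l(jh)^k=h^k\sum_{j=0}^{n+l}a_j^l\,j^k$ and the right side $\delta_{k,l}h^l$; comparing, this forces the $h$-independent moment equations
\begin{equation}
\sum_{j=0}^{n+l} a_j^l\,j^k=\delta_{k,l},\qquad k=0,1,\dots,n+l.
\end{equation}
The coefficient matrix $\big(j^{k}\big)_{k,j=0}^{n+l}$ is the transpose of the Vandermonde matrix on the distinct nodes $0,1,\dots,n+l$, with determinant $\prod_{0\le i<j\le n+l}(j-i)\neq 0$, so the system has a unique solution $\{a_j^l\}_{j=0}^{n+l}$, independent of $h$. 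This gives existence. For uniqueness, note conversely that if some constants $a_j^l$ realize the claimed expansion for \emph{every} admissible $y$, then evaluating at $y(x)=(x-x_0)^k$ and matching the two sides as functions of $h$ (here one uses $k\le n+l<n+l+1$, so the error term $O(h^{n+l+1})$ cannot interfere with the coefficient of $h^k$) reproduces exactly the above equations; hence the $a_j^l$ are uniquely determined.

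Next I would establish the error bound. Expanding $y(x_j)=y(x_0+jh)$ about $x_0$ by Taylor's formula to order $n+l$ with integral remainder, the polynomial-of-degree-$\le n+l$ part is, by construction of the $a_j^l$, mapped exactly onto $\tfrac{y^{(l)}(x_0)}{l!}h^l$, while each remainder contribution is $O(h^{n+l+1})$ because $|x_j-x_0|\le (n+l)h$ and the relevant top-order derivative of $y$ is integrable; summing the finitely many terms and dividing by $h^l$ yields (\ref{equation_add1.2}). An equivalent, slightly cleaner route is to observe that $\sum_j a_j^l y(x_j)=\tfrac{1}{l!}P^{(l)}(x_0)\,h^l$, where $P$ is the degree-$(n+l)$ polynomial interpolating $y$ at $x_0,\dots,x_{n+l}$, and then invoke the standard estimate $|y^{(l)}(x_0)-P^{(l)}(x_0)|=O(h^{n+1})$ for interpolation on a uniform stencil. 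I expect this error step to be the main obstacle: one must make sure the cancellation forced by the moment conditions is exploited to the full order, and that the Taylor remainder is handled under the stated (relatively weak) regularity of $y$ rather than bounded crudely term by term.
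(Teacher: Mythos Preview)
Your approach is essentially the paper's: both derive the moment conditions $\sum_{j} a_j^l\, j^k=\delta_{k,l}$ (the paper writes this as the linear system $\mathbf{L}\,\mathbf{a}^l=\mathbf{e}^l$ with $\mathbf{L}_{k,j}=j^k$) and conclude existence and uniqueness from the invertibility of the Vandermonde-type matrix. The paper's argument is in fact briefer than yours---it simply writes the order-$n$ Taylor expansion $y(x_j)=\sum_{k=0}^n \tfrac{y^{(k)}(x_0)}{k!}(jh)^k+O(h^{n+1})$ and reads off the system without a separate remainder analysis; your caution about reaching the full $O(h^{n+l+1})$ under only $C^n$ regularity (so that Taylor to order $n+l$ is not directly available) is well placed, but the paper does not address that point either.
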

\begin{proof}
Under the hypotheses of Lemma \ref{lemma_add1.1}, the following equalities can be obtained from the Taylor expansion
\begin{equation}\label{equation_add1.3}
y(x_j)=\sum_{l=0}^n \frac{y^{(l)}(x_0)}{l!}(jh)^l+O(h^{n+1}),~~~j=0,1,\cdots,n+l.
\end{equation}
Denote $\textbf{a}^l=(a_0^l,a_1^l,\cdots,a_{n+l}^l)^{T}$. It is easy to see that $\textbf{a}^l$ is the solution of the equations
\begin{equation}\label{equation_add1.4}
\textbf{L}~\textbf{a}^l=\textbf{e}^l,
\end{equation}
where $\textbf{e}^l$ is a $(n+l+1)$-dimensional vector, with
\begin{equation}\label{equation_add1.5}
\textbf{e}^l_j=\left\{ \begin{array}{ll}
1,&~j=l,\\
0,&~else;
\end{array} \right.
\end{equation}
and
\begin{equation}\label{equation_add1.6}
\textbf{L}=\left[ \begin{array}{ccccc}
1      & 1      & 1      & \cdots & 1      \\
0      & 1      & 2      & \cdots & n+l      \\
0      & 1      & 2^2    & \cdots & (n+l)^2    \\
\vdots & \vdots & \vdots &        & \vdots \\
0      & 1      & 2^{n+l}    & \cdots & (n+l)^{n+l}
\end{array}
\right].
\end{equation}
The existence and uniqueness of $\textbf{a}^l$ are guaranteed by the reversibility of \textbf{L}, which complete the proof.
\end{proof}

\begin{remark}
While it is true that the condition number of $\textbf{L}$ in Lemma \ref{lemma_add1.1} grows rapidly with the increase of $n$,
which might bring an inaccurate numerical result of $\{a_j^l\}_{j=0}^{n+l}$. Fortunately, however, in practice, $n$ is related to the convergence order of the approximations, which usually is less than $5$. Therefore, most of the time we do not need to take the extreme case of $\textbf{L}$ into consideration.
\end{remark}

For the easy of presentation, it is convenient to assume $a_j^l=0$ for $j=n+l+1,\cdots,2n$. In this way, (\ref{equation_add1.2}) can be
uniformly rewritten as
\begin{equation}\label{equation_add1.2}
\sum_{j=0}^{2n} h^{-l} a_j^l y(x_j)=\frac{y^{(l)}(x_0)}{l!}+O(h^{n+1}).
\end{equation}

Let us assume $u\in C^n[0,b]$, $D^{n+1}u(x)\in L^1[0,b]$, but not necessarily with homogeneous boundaries.
It is obvious that
\begin{eqnarray}\label{equation_add1.7}
\,_{0}D_{x}^{\alpha}u(x)&=&
\,_{0}D_{x}^{\alpha}\big(u(x)-\sum_{l=0}^n \frac{u^{(l)}(0)}{l!}x^l\big)+
\sum_{l=0}^n \frac{u^{(l)}(0)}{l!}\,_{0}D_{x}^{\alpha}x^l
\nonumber\\
&:=&\,_{0}D_{x}^{\alpha} r(x)+\sum_{l=0}^n \frac{u^{(l)}(0)}{l!}\,_{0}D_{x}^{\alpha} x^l,
\end{eqnarray}
where $r(x)=u(x)-\sum_{l=0}^n \frac{u^{(l)}(0)}{l!}x^l$ satisfies $r(x)\in C^n [0,b]$, $D^{n+1}r(x)\in L^1[0,b]$, also $D^l r(0)=0$ for $l=0,1,\cdots,n$.

Thus, by Theorem \ref{the2.23}, there are two sets of data $\{c_{-1}, c_0, c1\}$ and $\{d_{-1},d_0,d_1\}$, such that
\begin{eqnarray}\label{equation_add1.8}
&&c_{-1}\,_{0}D_{x}^{\alpha}r(x-h)+c_0\,_{0}D_{x}^{\alpha}r(x)+c_1\,_{0}D_{x}^{\alpha}r(x+h)
\nonumber\\
&=&
d_{-1}\delta_{h,-1}^{\alpha}r(x)+d_{0}\delta_{h,0}^{\alpha}r(x)+d_{1}\delta_{h,1}^{\alpha}r(x)+O(h^{n-1}).
\end{eqnarray}

Taking
\begin{equation}\label{equation_add1.9}
ds(x_i):=h^{-\alpha}\sum_{j=0}^{2n}\sum_{l=0}^n \frac{\Gamma(l+1)}{\Gamma(l+1-\alpha)}i^{l-\alpha}a_j^lu(x_j),
\end{equation}
then by Lemma \ref{lemma_add1.1} and (\ref{equation2.13}), we have
\begin{equation}\label{equation_add1.10}
ds(x_i)-\sum_{l=0}^n \frac{u^{(l)}(0)}{l!}\,_{0}D_{x}^{\alpha}x_{i}^l=O(h^{n+1}),
\end{equation}
and
\begin{eqnarray}\label{equation_add1.11}
&&\delta_{h,p}^{\alpha}r(x_i)
\nonumber\\
&=&h^{-\alpha}\sum_{k=0}^{i+p}g_k^{(\alpha)} r(x_{i-k+p})
\nonumber\\
&=&h^{-\alpha}\Big[\sum_{k=0}^{i+p}g_k^{(\alpha)} u(x_{i-k+p})-\sum_{k=0}^{i+p}g_k^{(\alpha)}\sum_{l=0}^n\frac{u^{(l)}(0)}{l!}x_{i-k+p}^{l}\Big]
\nonumber\\
&=&h^{-\alpha}\Big[\sum_{k=0}^{i+p}g_k^{(\alpha)} u(x_{i-k+p})-
\sum_{k=0}^{i+p}g_k^{(\alpha)}\sum_{l=0}^n\sum_{j=0}^{2n} a_j^l u(x_j)(i-k+p)^l+\sum_{k=0}^{i+p}g_k^{(\alpha)}\sum_{l=0}^n O(h^{n+1})\Big]
\nonumber\\
&=&h^{-\alpha}\Big[\sum_{k=0}^{i+p}g_k^{(\alpha)} u(x_{i-k+p})-
\sum_{k=0}^{i+p}g_k^{(\alpha)}\sum_{l=0}^n\sum_{j=0}^{2n} a_j^l u(x_j)(i-k+p)^l+O(h^{n+1})\Big]
\nonumber\\
&=&h^{-\alpha}\Big[\sum_{k=0}^{i+p}g_k^{(\alpha)} u(x_{i-k+p})-\sum_{j=0}^{2n}\big(\sum_{l=0}^n  (\sum_{k=0}^{i+p}g_k^{(\alpha)}(i-k+p)^{l})\big)a_j^l u(x_j)\Big]
\nonumber\\
&&+O(h^{n+1-\alpha})
\end{eqnarray}
for $1\leq i \leq N=1/h$. Hence,
\begin{eqnarray}\label{equation_add1.12}
&&c_{-1}\,_{0}D_{x}^{\alpha}u(x_{i-1})+c_0\,_{0}D_{x}^{\alpha}u(x_{i})+c_1\,_{0}D_{x}^{\alpha}u(x_{i+1})
\nonumber\\
&=&
d_{-1}\delta_{h,-1}^{\alpha}r(x_{i})+d_{0}\delta_{h,0}^{\alpha}r(x_i)+d_{1}\delta_{h,1}^{\alpha}r(x_i)
\nonumber\\
&&+c_{-1}ds(x_{i-1})+c_{0}ds(x_{i})+c_{1}ds(x_{i+1})+O(h^{n-1})
\nonumber\\
&=&d_{-1}h^{-\alpha}\Big[\sum_{k=0}^{i-1}g_k^{(\alpha)} u(x_{i-k-1})-\sum_{j=0}^{2n}\big(\sum_{l=0}^n  (\sum_{k=0}^{i-1}g_k^{(\alpha)}(i-k-1)^{l})\big)a_j^l u(x_j)\Big]
\nonumber\\
&&+d_{0}h^{-\alpha}\Big[\sum_{k=0}^{i}g_k^{(\alpha)} u(x_{i-k})-\sum_{j=0}^{2n}\big(\sum_{l=0}^n  (\sum_{k=0}^{i}g_k^{(\alpha)}(i-k)^{l})\big)a_j^l u(x_j)\Big]
\nonumber\\
&&+d_{1}h^{-\alpha}\Big[\sum_{k=0}^{i+1}g_k^{(\alpha)} u(x_{i-k+1})-\sum_{j=0}^{2n}\big(\sum_{l=0}^n  (\sum_{k=0}^{i+1}g_k^{(\alpha)}(i-k+1)^{l})\big)a_j^l u(x_j)\Big]
\nonumber\\
&&+c_{-1}h^{-\alpha}\sum_{j=0}^{2n}\sum_{l=0}^n \frac{\Gamma(l+1)}{\Gamma(l+1-\alpha)}(i-1)^{l-\alpha}a_j^l u(x_j)
\nonumber\\
&&+c_{0}h^{-\alpha}\sum_{j=0}^{2n}\sum_{l=0}^n \frac{\Gamma(l+1)}{\Gamma(l+1-\alpha)}i^{l-\alpha}a_j^l u(x_j)
\nonumber\\
&&+c_{1}h^{-\alpha}\sum_{j=0}^{2n}\sum_{l=0}^n \frac{\Gamma(l+1)}{\Gamma(l+1-\alpha)}(i+1)^{l-\alpha}a_j^l u(x_j)+O(h^{n-1}).
\end{eqnarray}

So far, by denoting $u_{i}$ as the numerical approximation of $u(x_i)$, the corresponding
scheme for (\ref{equation_add1.1}) is obtained as
\begin{eqnarray}\label{equation_add1.13}
&&-d_{-1}h^{-\alpha}\Big[\sum_{k=0}^{i-1}g_k^{(\alpha)} u_{i-k-1}-\sum_{j=0}^{2n}\big(\sum_{l=0}^n  (\sum_{k=0}^{i-1}g_k^{(\alpha)}(i-k-1)^{l})\big)a_j^l u_j\Big]
\nonumber\\
&&-d_{0}h^{-\alpha}\Big[\sum_{k=0}^{i}g_k^{(\alpha)} u_{i-k}-\sum_{j=0}^{2n}\big(\sum_{l=0}^n  (\sum_{k=0}^{i}g_k^{(\alpha)}(i-k)^{l})\big)a_j^l u_j\Big]
\nonumber\\
&&-d_{1}h^{-\alpha}\Big[\sum_{k=0}^{i+1}g_k^{(\alpha)} u_{i-k+1}-\sum_{j=0}^{2n}\big(\sum_{l=0}^n  (\sum_{k=0}^{i+1}g_k^{(\alpha)}(i-k+1)^{l})\big)a_j^l u_j\Big]
\nonumber\\
&&-c_{-1}h^{-\alpha}\sum_{j=0}^{2n}\sum_{l=0}^n \frac{\Gamma(l+1)}{\Gamma(l+1-\alpha)}(i-1)^{l-\alpha}a_j^l u_j
\nonumber\\
&&-c_{0}h^{-\alpha}\sum_{j=0}^{2n}\sum_{l=0}^n \frac{\Gamma(l+1)}{\Gamma(l+1-\alpha)}i^{l-\alpha} a_j^l u_j
\nonumber\\
&&-c_{1}h^{-\alpha}\sum_{j=0}^{2n}\sum_{l=0}^n \frac{\Gamma(l+1)}{\Gamma(l+1-\alpha)}(i+1)^{l-\alpha} a_j^l u_j
\nonumber\\
&&+c_{-1}b_{i-1}u_{i-1}+c_0 b_{i}u_{i}+c_1 b_{i+1}u_{i+1}
\nonumber\\
&=&c_{-1}f_{i-1}+c_0 f_{i}+c_{1}f_{i+1}
\end{eqnarray}
for $i=1,2,\cdots,N-1$, where $b_{i}=b(x_i),~f_i=f(x_i)$ for $i=0,1,\cdots,N$.
And (\ref{equation_add1.13}) can be rewritten in matrix form as
\begin{eqnarray}\label{equation_add1.14}
&&-d_{-1}h^{-\alpha}\Big[\big(\textbf{G}_{-1}\textbf{u}+u_0\textbf{g}_{-1} \big)
-\big(\textbf{S}_{-1}\textbf{u}+u_0\textbf{r}_{s,-1} \big)\Big]
\nonumber\\
&&-d_{0}h^{-\alpha}\Big[\big(\textbf{G}_{0}\textbf{u}+u_0\textbf{g}_{0} \big)
-\big(\textbf{S}_{0}\textbf{u}+u_0\textbf{r}_{s,0} \big)\Big]
\nonumber\\
&&-d_{1}h^{-\alpha}\Big[\big(\textbf{G}_{1}\textbf{u}+u_0\textbf{g}_{1}+u_{N}\textbf{e}_{N-1}\big)
-\big(\textbf{S}_{1}\textbf{u}+u_0\textbf{r}_{s,1} \big)\Big]
\nonumber\\
&&-c_{-1}h^{-\alpha}\Big[\textbf{D}_{-1}\textbf{u}+u_0\textbf{r}_{d,-1}\Big]
-c_{0}h^{-\alpha}\Big[\textbf{D}_{0}\textbf{u}+u_0\textbf{r}_{d,0}\Big]
\nonumber\\
&&-c_{1}h^{-\alpha}\Big[\textbf{D}_{1}\textbf{u}+u_0\textbf{r}_{d,1}\Big]
+\textbf{B}\textbf{u}+\textbf{r}_b
\nonumber\\
&=&c_{-1}\textbf{f}_{-1}+c_{0}\textbf{f}_{0}+c_{1}\textbf{f}_{1},
\end{eqnarray}
where
\begin{eqnarray*}\label{equation_add1.15}
\textbf{G}_{-1}=\left [ \begin{array}{cccccc}
0&&&&&\\
g_{0}^{(\alpha)}   &  0 &       &&&            \\
g_{1}^{(\alpha)}  &   g_{0}^{(\alpha)}  & 0  &&&         \\
        &\ddots  &  \ddots    &   \ddots && \\
g_{N-4}^{(\alpha)} & g_{N-5}^{(\alpha)}& \cdots  &  g_{0}^{(\alpha)} & 0& \\
g_{N-3}^{(\alpha)} & g_{N-4}^{(\alpha)} & \cdots  & g_{1}^{(\alpha)}& g_{0}^{(\alpha)}  &  0
\end{array}
\right ],
\end{eqnarray*}

\begin{eqnarray*}\label{equation_add1.16}
\textbf{G}_{0}=\left [ \begin{array}{cccccc}
g_{0}^{(\alpha)}   &   &       &&&            \\
g_{1}^{(\alpha)}  &   g_{0}^{(\alpha)}  &   &&&         \\
        &\ddots  &  \ddots    &   \ddots && \\
g_{N-3}^{(\alpha)} & g_{N-4}^{(\alpha)} & \cdots  & g_{0}^{(\alpha)}& & \\
g_{N-2}^{(\alpha)} & g_{N-3}^{(\alpha)} & \cdots  & g_{1}^{(\alpha)}& g_{0}^{(\alpha)}  &
\end{array}
\right ],
\end{eqnarray*}

\begin{eqnarray*}\label{equation_add1.17}
\textbf{G}_{1}=\left [ \begin{array}{cccccc}
g_{1}^{(\alpha)}   &  g_{0}^{(\alpha)}  &       &&&            \\
g_{2}^{(\alpha)}  &   g_{1}^{(\alpha)}  &g_{0}^{(\alpha)}   &&&         \\
        &\ddots  &  \ddots    &   \ddots && \\
g_{N-2}^{(\alpha)} & g_{N-3}^{(\alpha)}& \cdots  &  g_{1}^{(\alpha)} & g_{0}^{(\alpha)} \\
g_{N-1}^{(\alpha)} & g_{N-2}^{(\alpha)} & \cdots  & g_{2}^{(\alpha)}& g_{1}^{(\alpha)}
\end{array}
\right ];
\end{eqnarray*}

\begin{eqnarray*}\label{equation_add1.18}
&&\textbf{u}=(u_1,u_2,\cdots,u_{N-1})^{T};\\
&&\textbf{g}=(g_0,g_1,\cdots,g_N)^{T},\\
&&\textbf{g}_{-1}=\textbf{g}(1:N-1),
~\textbf{g}_{0}=\textbf{g}(2:N),
~\textbf{g}_{1}=\textbf{g}(3:N+1);
\end{eqnarray*}
$\textbf{e}_{N-1}$ is a $(N-1)$-dimensional vector, with
\begin{eqnarray*}\label{equation_add1.19}
(\textbf{e}_{N-1})_j=\left\{ \begin{array}{ll}
1,&~j=N-1,\\
0,&~else;
\end{array} \right.
\end{eqnarray*}

\begin{eqnarray*}\label{equation_add1.20}
\textbf{P}=\left [ \begin{array}{ccccc}
1      &     1  & 1      & \cdots  & 1\\
1      &     2  &     2^2 & \cdots & 2^n\\
\vdots & \vdots &  \vdots &        & \vdots\\
1      & N-1    & (N-1)^2 & \cdots & (N-1)^n
\end{array}
\right ],
\end{eqnarray*}

\begin{eqnarray*}\label{equation_add1.21}
\textbf{Q}_{-1}=[\textbf{g}_{-1},\textbf{0},\cdots,\textbf{0}],
~\textbf{Q}_{0}=[\textbf{g}_{0},\textbf{0},\cdots,\textbf{0}],
\end{eqnarray*}

\begin{eqnarray*}\label{equation_add1.22}
\textbf{Q}_{1}=\left [ \begin{array}{ccccc}
g_{2}^{(\alpha)}  &   &     &        &\\
g_{3}^{(\alpha)}  &   &     &        & \\
\vdots            &   &     &        &\\
g_{N}^{(\alpha)}+1& N & N^2 & \cdots & N^n
\end{array}
\right ],
\end{eqnarray*}

\begin{eqnarray*}\label{equation_add1.23}
\textbf{A}_{(n+1)\times N}=\left [ \begin{array}{cccccccc}
a_0^0 & a_1^0 & \cdots & a_{2n}^0 & 0      & \cdots & 0 &\\
a_0^1 & a_1^1 & \cdots & a_{2n}^1 & 0      & \cdots & 0 &\\
\vdots&\vdots &        &\vdots & \vdots &        & \vdots\\
a_0^n & a_1^n & \cdots & a_{2n}^n & 0      & \cdots & 0 &\\
\end{array}
\right ];
\end{eqnarray*}

\begin{eqnarray*}\label{equation_add1.24}
&&\textbf{M}_{id}=(\textbf{G}_{id}\textbf{P}+\textbf{Q}_{id})\textbf{A},\\
&&\textbf{S}_{id}=\textbf{M}_{id}(:,2:end),
~\textbf{r}_{s,id}=\textbf{M}_{id}(:,1), ~~~id=-1,~0,~1;
\end{eqnarray*}

\begin{eqnarray*}\label{equation_add1.25}
&&(\textbf{K}_{id})_{i,l}=\frac{\Gamma(l+1)}{\Gamma(l+1-\alpha)}(i+id)^{l-\alpha},~i=1,2,\cdots,N-1,~l=0,1,\cdots,n,\\
&&\textbf{N}_{id}=\textbf{K}_{id}\cdot \textbf{A},\\
&&\textbf{D}_{id}=\textbf{N}_{id}(:,2:end),
~\textbf{r}_{d,id}=\textbf{N}_{id}(:,1), ~~~id=-1,~0,~1;
\end{eqnarray*}

\begin{eqnarray*}\label{equation_add1.26}
\textbf{B}=\left[ \begin{array}{ccccc}
c_{0}b_1   &   c_{1}b_2  &                 &&           \\
c_{-1}b_1  &   c_{0}b_2  &  c_{1}b_3       &&          \\
           &   c_{-1}b_2 &  c_{0}b_3       &  c_{1}b_4        &  \\
           &\ddots       &  \ddots         &   \ddots         &\\
           &             &  c_{-1}b_{N-3}  &   c_{0}b_{N-2}   &  c_{1}b_{N-1} \\
           &             &                 &   c_{-1}b_{N-2}  &  c_{0}b_{N-1}
\end{array}
\right];
\end{eqnarray*}

\begin{eqnarray*}\label{equation_add1.27}
&&\textbf{r}_b=(c_1 b_0 u_0,0,\cdots,0,c_1 b_N u_N)^{T};\\
&&\textbf{F}=(f_0,f_1,\cdots,f_N)^{T},\\
&&\textbf{F}_{-1}=\textbf{F}(1:N-1),
~\textbf{F}_{0}=\textbf{F}(2:N),
~\textbf{F}_{1}=\textbf{F}(3:N+1).
\end{eqnarray*}

Take
\begin{equation}\label{equation_add1.28}
ds_j:=h^{-\alpha}\sum_{j=0}^n\sum_{l=0}^n \frac{\Gamma(l+1)}{\Gamma(l+1-\alpha)}i^{l-\alpha}a_j^l u_j.
\end{equation}
It should be declared if $u(0)\neq 0$ or $u'(0)\neq 0$, then $\,_{0}D_{x}^{\alpha}u(x)\rightarrow \infty(x\rightarrow 0)$,
$f(x)\rightarrow\infty (x\rightarrow 0)$; else if $u(0)=0$ and $u'(0)=0$, then $\,_{0}D_{x}^{\alpha}u(x)\rightarrow 0 (x\rightarrow 0)$, $f(x)\rightarrow 0 (x\rightarrow 0)$. Considering these, $ds_0$ is mandatorily to be $0$, and $f_0$ to be $b_0 u_0$, in real computation.

%by
%\begin{equation}\label{equation_add1.2}
%\sum_{j=0}^n a_j^l u(x_j)=\frac{u^{(l)}(x_0)}{l!}h^l+O(h^{n+1})
%\end{equation}
%
%\begin{eqnarray}\label{equation_add1.10}
%&&\sum_{l=0}^n \frac{u^{(l)}(0)}{l!}\,_{0}D_{x}^{\alpha}x_i^l
%\nonumber\\
%&=&\sum_{l=0}^n \frac{u^{(l)}(0)}{l!}\frac{\Gamma(l+1)}{\Gamma(l+1-\alpha)}(ih)^{l-\alpha}
%\nonumber\\
%&=&\sum_{l=0}^n \frac{\Gamma(l+1)}{\Gamma(l+1-\alpha)}\frac{u^{(l)}(0)}{l!}h^{l}(i)^{l-\alpha}h^{-\alpha}
%\nonumber\\
%&=&h^{-\alpha}\sum_{l=0}^n \frac{\Gamma(l+1)}{\Gamma(l+1-\alpha)}\big(\sum_{j=0}^n a_j^l u(x_j) +O(h^{n+1})\big) (i)^{l-\alpha}
%\nonumber\\
%&=&h^{-\alpha}\sum_{l=0}^n \frac{\Gamma(l+1)}{\Gamma(l+1-\alpha)}\sum_{j=0}^n a_j^l u(x_j) (i)^{l-\alpha}
%+O(h^{n+1})\sum_{l=0}^n \frac{\Gamma(l+1)}{\Gamma(l+1-\alpha)}(i)^{l}x_i^{-\alpha}\sum_{j=0}^n a_j^l u(x_j)
%\nonumber\\
%\end{eqnarray}

\subsection{Examples}\label{subsec_add1.2}

We apply the quasi-compact approximations to the steady state problem
\begin{equation}\label{equation_add1.29}
\left\{ \begin{array}{l}
-\,_{0}D_{x}^{\alpha}u(x)+u(x)=f(x),~~~x \in (0,1),\\
u(0)=-1,\\
u(1)=-3,
\end{array} \right.
\end{equation}
with the right-hand function
\begin{equation}\label{equation_add1.30}
f(x)=\frac{1}{\Gamma(1-\alpha)}x^{-\alpha}+\frac{1}{\Gamma(2-\alpha)}x^{1-\alpha}+\frac{\Gamma(4+\alpha)}{\Gamma(4)}x^3-(1+x+x^{3+\alpha}).
\end{equation}
And the exact solution is $u(x)=-1-x-x^{3+\alpha}$.
%, where $u(x)\in C^4[0,1]$, $D^{5}u(x)\in L^1[0,1]$.
%Thus, by the discussion in subsection \ref{subsec_add1.1},
%it is expected that by using quasi-compact approximations to compute (\ref{equation4.2}), the convergent orders can reach
%to as high as $4$.
The numerical data in Tables \ref{table_add1.1} and \ref{table_add1.2} verify the desired convergence. 

\begin{table}
\caption{The discrete $L^2$ errors and their convergence rates to (\ref{equation_add1.29})
by using the second and fourth order quasi-compact approximations
for different $\alpha$}\label{table_add1.1}
\begin{tabular}{cccccccccc}
\hline\noalign{\smallskip}
\multicolumn{2}{c}{Number}   &\multicolumn{2}{c}{$1$} &\multicolumn{2}{c}{$3$}
       &\multicolumn{2}{c}{$5$} &\multicolumn{2}{c}{$(1,2)+(1,8)$} \\
\noalign{\smallskip}\hline\noalign{\smallskip}
$\alpha$&$N$& $\|u-U\|$& rate & $\|u-U\|$ & rate & $\|u-U\|$ & rate & $\|u-U\|$ & rate\\
\noalign{\smallskip}\hline\noalign{\smallskip}
1.1    & 8 & 1.96 1e-3 & -    & 1.11 1e-2 & -    & 3.90 1e-3 & -    & 6.12 1e-6 & -  \\
       & 16& 4.85 1e-4 & 2.01 & 3.10 1e-3 & 1.85 & 8.75 1e-4 & 2.16 & 2.67 1e-7 & 5.52 \\
       & 32& 1.17 1e-4 & 2.05 & 8.16 1e-4 & 1.92 & 2.02 1e-4 & 2.11 & 1.34 1e-8 & 4.55 \\
       & 64& 2.82 1e-5 & 2.05 & 2.09 1e-4 & 1.96 & 4.80 1e-5 & 2.07 & 4.74 1e-10& 4.59 \\
%       &128& 6.86 1e-6 & 2.04 & 5.29 1e-5 & 1.98 & 1.16 1e-5 & 2.04 & 3.11 1e-10& 0.61. \\
\noalign{\smallskip}\hline\noalign{\smallskip}
1.5    & 8 & 3.96 1e-3 & -    & 1.23 1e-2 & -    & 3.06 1e-3 & -    & 1.45 1e-5 & -\\
       & 16& 1.04 1e-3 & 1.93 & 3.13 1e-3 & 1.97 & 7.88 1e-4 & 1.96 & 5.83 1e-7 & 4.63\\
       & 32& 2.61 1e-4 & 1.99 & 7.82 1e-4 & 2.00 & 2.16 1e-4 & 1.87 & 2.18 1e-8 & 4.74\\
       & 64& 6.43 1e-5 & 2.02 & 1.94 1e-4 & 2.01 & 6.83 1e-5 & 1.66 & 7.79 1e-10& 4.81\\
%       &128& 1.57 1e-5 & 2.04 & 4.77 1e-5 & 2.02 & 4.86 1e-5 & 0.49 & 5.04 1e-10& 0.63\\
\noalign{\smallskip}\hline\noalign{\smallskip}
1.9    & 8 & 6.04 1e-3 & -    & 8.01 1e-3 & -    & 4.21 1e-3 & -    & 6.92 1e-6 & -\\
       & 16& 1.53 1e-3 & 1.98 & 2.04 1e-3 & 1.98 & 1.07 1e-3 & 1.97 & 2.59 1e-7 & 4.74 \\
       & 32& 3.87 1e-4 & 1.99 & 5.15 1e-4 & 1.99 & 2.71 1e-4 & 1.99 & 8.96 1e-9 & 4.85\\
       & 64& 9.71 1e-5 & 1.99 & 1.29 1e-4 & 1.99 & 6.80 1e-5 & 1.99 & 3.04 1e-10& 4.88\\
%       &128& 2.43 1e-5 & 2.00 & 3.24 1e-5 & 2.00 & 1.70 1e-5 & 2.00 & 1.82 1e-10& 0.74\\
\noalign{\smallskip}\hline
\end{tabular}
\end{table}

\begin{table}
\caption{The discrete $L^2$ errors and their convergence rates to (\ref{equation_add1.29})
by using the third order quasi-compact approximations
for different $\alpha$}\label{table_add1.2}
\begin{tabular}{cccccccccc}
\hline\noalign{\smallskip}
\multicolumn{2}{c}{Number}   &\multicolumn{2}{c}{$$(1,3)$$} &\multicolumn{2}{c}{$$(1,5)$$} &\multicolumn{2}{c}{$$(3,5)$$}
       &\multicolumn{2}{c}{$$(3,8)$$}\\
\noalign{\smallskip}\hline\noalign{\smallskip}
$\alpha$&$N$& $\|u-U\|$& rate & $\|u-U\|$ & rate & $\|u-U\|$ & rate & $\|u-U\|$ & rate\\
\noalign{\smallskip}\hline\noalign{\smallskip}
1.1    & 8 & 6.84 1e-4 & -    & 2.59 1e-4 & -    & 4.73 1e-4 & -    & 4.33 1e-4 & -  \\
       & 16& 8.68 1e-5 & 2.98 & 5.84 1e-5 & 2.98 & 6.10 1e-5 & 2.96 & 5.27 1e-5 & 3.04 \\
       & 32& 1.09 1e-5 & 2.99 & 7.37 1e-6 & 2.99 & 7.74 1e-6 & 2.98 & 6.56 1e-6 & 3.01 \\
       & 64& 1.38 1e-6 & 2.99 & 2.97 1e-7 & 2.99 & 9.75 1e-7 & 2.99 & 8.24 1e-7 & 2.99 \\
%       &128& 1.73 1e-7 & 2.99 & 1.16 1e-7 & 3.00 & 1.22 1e-7 & 3.00 & 1.04 1e-7 & 2.99 \\
\noalign{\smallskip}\hline\noalign{\smallskip}
1.5    & 8 & 4.07 1e-4 & -    & 3.15 1e-4 & -    & 1.00 1e-5 & -    & 4.31 1e-4 & -\\
       & 16& 5.20 1e-5 & 2.97 & 4.05 1e-5 & 2.96 & 2.08 1e-6 & 2.27 & 5.54 1e-5 & 2.96\\
       & 32& 6.58 1e-6 & 2.98 & 5.13 1e-6 & 2.98 & 3.01 1e-7 & 2.79 & 7.02 1e-6 & 2.98\\
       & 64& 8.28 1e-7 & 2.99 & 6.45 1e-7 & 2.99 & 3.92 1e-8 & 2.94 & 8.83 1e-7 & 2.99\\
%       &128& 1.04 1e-7 & 2.99 & 8.13 1e-8 & 2.99 & 4.61 1e-9 & 3.01 & 1.11 1e-7 & 2.99\\
\noalign{\smallskip}\hline\noalign{\smallskip}
1.9    & 8 & 8.60 1e-5 & -    & 2.21 1e-4 & -    & 6.98 1e-4 & -    & 2.64 1e-4 & -\\
       & 16& 1.05 1e-5 & 3.03 & 3.12 1e-5 & 2.82 & 9.11 1e-5 & 2.94 & 2.67 1e-5 & 3.07 \\
       & 32& 1.30 1e-6 & 3.01 & 4.02 1e-6 & 2.96 & 1.15 1e-5 & 2.98 & 3.01 1e-6 & 3.01\\
       & 64& 1.63 1e-7 & 3.00 & 5.06 1e-7 & 2.99 & 1.45 1e-6 & 3.00 & 4.12 1e-7 & 3.00\\
%       &128& 2.03 1e-7 & 3.00 & 6.33 1e-8 & 3.00 & 1.81 1e-7 & 3.00 & 5.17 1e-8 & 3.00\\
\noalign{\smallskip}\hline
\end{tabular}
\end{table}

%%%%%%%%%%%%%%%%%%%%%%%%%%%%%%%%%%%%%%%%%%%%%%%%%%%%%%%%%%%%

% BibTeX users please use one of
%\bibliographystyle{spbasic}      % basic style, author-year citations
%\bibliographystyle{spmpsci}      % mathematics and physical sciences
%\bibliographystyle{spphys}       % APS-like style for physics
%\bibliography{}   % name your BibTeX data base

% Non-BibTeX users please use

\end{document}